%     Lyapunov spectrum of square-tiled cyclic covers
%     by A.~Eskin, M.~Kontsevich, A.Zorich
%
%----------------------------------------------------------------
%
%     File: cyclic_covers_revision_long.tex
%     modified from : cyclic_covers_Feb15.tex (Alex corrections) <- cyclic_covers_Feb12.tex (Maxim's corrections)
%     merged with "cyan" corrections of Maxim from cyclic_covers_Feb17.tex with
%     Last modified: February 25, 2011 By A.Z.
%     MAKING MODIFICATIONS PLEASE CHANGE THE DATE AT LINE 150 !!!
%
%****************************************************************
\documentclass[12pt]{amsart}
\usepackage{amsmath,amsfonts,amssymb,amsthm}
%\usepackage{color}
%\definecolor{anton}{rgb}{0.75,0.0,0.25}
%
%                         MACROS
%
\hyphenation{homo-logous}
%------------------------------------------------
%       Redefinitions
%
\renewcommand{\emptyset}{\varnothing}
\renewcommand{\Im}{\operatorname{Im}}
\renewcommand{\Re}{\operatorname{Re}}

%------------------------------------------------
%       Aliases
%

\renewcommand{\epsilon}{\varepsilon}
%
%------------------------------------------------
%       Operator names
%

\newcommand{\SL}{\operatorname{SL}(2,{\mathbb R})}
\newcommand{\SLZ}{\operatorname{SL}(2,{\mathbb Z})}
\newcommand{\PSLZ}{\operatorname{PSL}(2,{\mathbb Z})}

\newcommand{\CP}{{\mathbb C}\!\operatorname{P}^1}
\newcommand{\lcm}{\operatorname{lcm}}
%------------------------------------------------
%       Substitutions
%

\newcommand{\MNa}{M_N(a_1,a_2,a_3,a_4)}
\newcommand{\modN}{\ (\textrm{ mod } N)}

%------------------------------------------------
%       Blackboard symbols
%

\newcommand\Z{\mathbb Z}

\newcommand\Q{\mathbb Q}

\newcommand\R[1]{{\mathbb R}^{#1}}

\newcommand\C[1]{{\mathbb C}^{#1}}

%------------------------------------------------
%   \cX  = draw X calligraphically.

\newcommand{\cC}{{\mathcal C}}
\newcommand{\cD}{{\mathcal D}}

\newcommand{\cH}{{\mathcal H}}
\newcommand{\cM}{{\mathcal M}}

%------------------------------------------------
%   Some new lengths

\newlength{\halfbls}\setlength{\halfbls}{.5\baselineskip}

%------------------------------------------------
%   Theorems and such

\newtheorem{Theorem}{Theorem}

\newtheorem*{NNTheorem}{Theorem}

\newtheorem{prop}{Proposition}

\newtheorem{Proposition}[prop]{Proposition}

\newtheorem{Lemma}{Lemma}%[section]

\newtheorem{cor}{Corollary}

\newtheorem{Corollary}[cor]{Corollary}

%------------------------------------------------
\theoremstyle{remark}

\newtheorem*{NNConvention}{Convention}

\newtheorem{Remark}{Remark}

\newtheorem*{NNRemark}{Remark}

%[section]

%------------------------------------------------
\theoremstyle{definition}

\newtheorem{Convention}{Convention}

%****************************************************************

\begin{document}

%~~~~~~~~~~~~~~~~~~~~~~~~~~~~~~~~~~~~~~~~~~~~~~~~~~~~~~~~~~~~~
\begin{picture}(0,0)(0,0)
\put(-13,30){Version of February 25, 2011}
\end{picture}
%~~~~~~~~~~~~~~~~~~~~~~~~~~~~~~~~~~~~~~~~~~~~~~~~~~~~~~~~~~~~~

\title[Lyapunov spectrum of square-tiled cyclic covers]
{Lyapunov spectrum of\\ square-tiled cyclic covers}

% First author
\author{Alex Eskin}
\thanks{Research  of  the first author is partially supported  by
NSF grant}
\address{
Department of Mathematics,
University of Chicago,
Chicago, Illinois 60637, USA\\
}
\email{eskin@math.uchicago.edu}

% Second author
\author{Maxim Kontsevich}
\address{IHES,
le Bois Marie,
35, route de Chartres,
91440 Bures-sur-Yvette, FRANCE\\
}
\email{maxim@ihes.fr}

% Third author
\author{Anton Zorich}
\thanks{Research of the third author is partially supported by ANR and PICS grants}
\address{
IRMAR,
Universit\'e Rennes-1,
Campus de Beaulieu,
35042 Rennes, cedex, France
}
\email{Anton.Zorich@univ-rennes1.fr}

\subjclass[2000]{
Primary
30F30, % Differentials on Riemann surfaces
32G15, % Moduli of Riemann surfaces, Teichm\"uller theory
32G20, % Period matrices, variation of Hodge structure; degenerations
57M50; % Geometric structures on low-dimensional manifolds
Secondary
14D07, % Variation of Hodge structures
37D25  % Nonuniformly hyperbolic systems (Lyapunov exponents, Pesin theory, etc.)
}

\keywords{Teichm\"uller geodesic flow, moduli space of quadratic
differentials, Lyapunov exponent, Hodge norm, cyclic cover}

\begin{abstract}
A  cyclic cover over $\CP$ branched at four points inherits a natural
flat  structure  from  the  ``pillow''  flat  structure  on the basic
sphere.  We  give  an  explicit  formula  for all individual Lyapunov
exponents  of  the  Hodge  bundle  over  the corresponding arithmetic
Teichm\"uller  curve.  The  key  technical  element  is evaluation of
degrees  of  line  subbundles  of  the Hodge bundle, corresponding to
eigenspaces of the induced action of deck transformations.
\end{abstract}
\maketitle

%   \vspace{-3.1pt}
%   \setcounter{tocdepth}{1}
\tableofcontents

% ###############################################################
% ###############################################################
% ###############################################################

\section*{Introduction}

The current paper is an application of a long-standing project of the
authors   on   Lyapunov   exponents  for  the  Hodge  bundle  of  the
\mbox{Teichm\"uller}   geodesic  flow~\cite{Eskin:Kontsevich:Zorich}.
This   application  was  inspired  by  the  papers~\cite{ForniSurvey}
and~\cite{Forni:Matheus}, where G.~Forni and C.~Matheus observed that
special  cyclic  covers  of  $\CP$  with  four  branch points (called
square-tiled  cyclic  covers)  induce arithmetic Teichm\"uller curves
with peculiar properties.

The geometry and the topology of cyclic covers is well explored; some
ideas   applied   in   our   paper   are   very   close  to  ones  of
I.~Bouw~\cite{Bouw},  I.~Bouw and M.~M\"oller~\cite{Bouw:Moeller}, of
C.~McMullen~\cite{McMullen},    and    of    G.~Forni,    C.~Matheus,
A.~Zorich~\cite{Forni:Matheus:Zorich},  who  studied  similar  cyclic
covers    in   a   similar   context   (see   also   the   paper   of
M.~Schmoll~\cite{Schmoll:D:symmetric}  where  certain class of cyclic
covers appear under the name ``$d$-symmetric differentials'').

In  the  present  paper we give a simple explicit expression for {\it
all}  individual  Lyapunov  exponents  for  the Hodge bundle over the
ergodic components of the Teichm\"uller geodesic flow associated with
the  above-mentioned  cyclic  covers  of  $\CP$.  We  show that these
Lyapunov  exponents  have a purely geometric interpretation: they are
expressed  in  terms  of  degrees  of  line  bundles contained in the
eigenspace  decomposition  of  the  Hodge  bundle with respect to the
induced   action   of   deck  transformations.  It  was  observed  by
D.~Chen~\cite{Chen}  that  the  sum  of  these  Lyapunov exponents is
closely   related   to   the   slope   of  the  Teichm\"uller  curves
parameterizing square-tiled cyclic covers.

\bigskip

\noindent\textbf{Hodge norm.}
A  complex  structure  on  the  Riemann surface $X$ underlying a flat
surface  $S$  of genus $g$ determines a complex $g$-dimensional space
of   holomorphic   1-forms   $\Omega(X)$   on   $X$,  and  the  Hodge
decomposition
$$
H^1(X;\C{}) = H^{1,0}(X)\oplus H^{0,1}(X) \simeq
\Omega(X)\oplus\bar \Omega(X)\ .
$$
The intersection form
\begin{equation}
\label{eq:intersection:form}
\langle\omega_1,\omega_2\rangle:=\frac{i}{2}
\int_{C} \omega_1\wedge  \overline{\omega_2}\qquad\qquad\qquad
\end{equation}
is   positive-definite   on  $H^{1,0}(X)$  and  negative-definite  on
$H^{0,1}(X)$.

The    projections    $H^{1,0}(X)\to    H^1(X;\R{})$,    acting    as
$[\omega]\mapsto[\Re(\omega)]$ and $[\omega]\mapsto[\Im(\omega)]$ are
isomorphisms  of vector spaces over $\R{}$. The Hodge operator $\ast:
H^1(X;\R{})\to   H^1(X;\R{})$  acts  as  the  inverse  of  the  first
isomorphism composed with the second one. In other words, given $v\in
H^1(X;\R{})$, there exists a unique holomorphic form $\omega(v)$ such
that   $v=[\Re(\omega(v))]$;   the   dual  $\ast  v$  is  defined  as
$[\Im(\omega)]$.

Define the \textit{Hodge norm} of $v\in H^1(X,\R{})$ as
$$
\|v\|^2=\langle\omega(v),\omega(v)\rangle
$$
\smallskip

    %------------------------------------------------------
\noindent\textbf{Gauss--Manin connection.}
Passing  from  an  individual  Riemann  surface  to  the moduli stack
$\cM_g$    of    Riemann    surfaces,    we    get   vector   bundles
$H^1_{\C{}}=H^{1,0}\oplus  H^{0,1}$,  and  $H^1_{\R{}}$  over $\cM_g$
with    fibers    $H^1(X,\C{})=H^{1,0}(X)\oplus    H^{0,1}(X)$,   and
$H^1(X,\R{})$ correspondingly over $X\in\cM_g$.
\medskip

Using    the   integer   lattices   $H^1(X,\Z{}\oplus   i\Z{})$   and
$H^1(X,\Z{})$   in   the  fibers  of  these  vector  bundles  we  can
canonically  identify  fibers  over  nearby  Riemann  surfaces.  This
identification  is  called  the \textit{Gauss--Manin} connection. The
Hodge  norm \textit{is not} preserved by the Gauss---Manin connection
and  the splitting $H^1_{\C{}}=H^{1,0}\oplus H^{0,1}$ \textit{is not}
covariantly constant with respect to this connection.

The  complex  vector  bundle  $H^{1,0}$ carries a natural holomorphic
structure,  and is called the {\it Hodge} bundle. The underlying real
smooth  vector  bundle is canonically isomorphic to the cohomological
bundle   $H^1_{\R{}}$   endowed  with  flat  Gauss-Manin  connection.
Slightly  abusing  the  language,  we  shall use the same name ``Hodge
bundle''  for  the  flat  bundle $H^1_{\R{}}$. Also, we shall use the
same terminology for the pullback of $H^{1,0}\simeq H^1_{\R{}}$ under
a  holomorphic  map  from  a complex algebraic curve (which will be a
cover of the Teichm\"uller curve) to $\cM_g$.
\smallskip

\smallskip

%------------------------------------------------------
\noindent\textbf{Lyapunov exponents.}
Informally,  the Lyapunov exponents of a vector bundle endowed with a
connection  can  be  viewed  as  logarithms  of  mean  eigenvalues of
monodromy of the vector bundle along a flow on the base.

In  the  case  of  the  Hodge  bundle considered as the cohomological
bundle,  we  take  a  fiber  of  $H^1_{\R{}}$  and  pull  it  along a
Teichm\"uller geodesic on the moduli space. We wait till the geodesic
winds a lot and comes close to the initial point and then compute the
resulting  monodromy matrix $A(t)$. Finally, we compute logarithms of
eigenvalues  of  $A^T\!A$, and normalize them by twice the length $t$
of the geodesic. By the Oseledets multiplicative ergodic theorem, for
almost   all  choices  of  initial  data  (starting  point,  starting
direction)  the  resulting  $2g$  real  numbers  converge  as  $t \to
\infty$,  to  limits  which  do not depend on the initial data. These
limits     $\lambda_1\ge\dots\ge\lambda_{2g}$    are    called    the
\textit{Lyapunov   exponents}   of   the   Hodge   bundle  along  the
Teichm\"uller flow.

The  matrix  $A(t)$ preserves the intersection form on cohomology, so
it  is  symplectic.  This implies that Lyapunov spectrum of the Hodge
bundle is symmetric with respect to a sign interchange,
$
\lambda_j=-\lambda_{2g-j+1}
$.
Moreover,
{
if  the  base of the bundle is located in the moduli space $\cH_g$ of
\textit{holomorphic $1$-forms},
}
from  elementary  geometric  arguments it follows that one always has
$\lambda_1=1$.   Thus,  the  Lyapunov  spectrum  is  defined  by  the
remaining nonnegative Lyapunov exponents
$$
\lambda_2\ge\dots\ge\lambda_g\, .
$$

\begin{Convention}
\label{conv:nonnegative:part}
The      collection      of     the     leading     $g$     exponents
$\{\lambda_1,\dots,\lambda_g\}$      will      be      called     the
\textit{nonnegative  part}  of  the  Lyapunov  spectrum  of the Hodge
bundle  $H^1_{\R{}}$.  We  warn  the  reader,  that  when some of the
exponents  are  null,  the  ``\textit{nonnegative  part}''
contains only  half of
\textit{all} zero exponents.
\end{Convention}

\smallskip

%-------------------------------------------------
\noindent\textbf{Cyclic covers.}
Consider   an   integer   $N\ge   1$  and  a  $4$-tuple  of  integers
$(a_1,\dots,a_4)$ satisfying the following conditions:
\begin{equation}
\label{eq:a1:a4}
0<a_i\le N\,;\quad \gcd(N, a_1,\dots,a_4) =1\,;\quad
\sum\limits_{i=1}^4 a_i\equiv 0 \modN\ .
\end{equation}
Let  $z_1,z_2,z_3,z_4\in  \C{}$ be four distinct points. By $\MNa$ we
denote the closed connected nonsingular Riemann surface obtained from
the one defined by the equation
\begin{equation}
\label{eq:cyclic:cover}
w^N=(z-z_1)^{a_1}(z-z_2)^{a_2}(z-z_3)^{a_3}(z-z_4)^{a_4}
\end{equation}
by   normalization.  By  construction,  $M_N(a_1,a_2,a_3,a_4)$  is  a
ramified cover over the Riemann sphere $\CP$ branched over the points
$z_1, \dots, z_4$. The group of deck transformations of this cover is
the cyclic group $\Z/N\Z$ with a generator $T:M\to M$ given by
\begin{equation}
\label{eq:T}
T(z,w)=(z,\zeta w) \,,
\end{equation}
where   $\zeta$   is   a   primitive   $N$th   root  of  unity  (e.g.
$\zeta=\zeta_N=\exp(2\pi i/N)$).

One  can  also  consider the case when one of the ramification points
$z_i$  is located at infinity. In this case one just skips the factor
$(z-z_i)^{a_i}$  in~\eqref{eq:cyclic:cover}.  Notice that the integer
parameter  $a_i$  is  uniquely  determined by three remaining numbers
$(a_j)_{j\ne i}$ by relations~\eqref{eq:a1:a4}.

Throughout  this  paper  by a \textit{cyclic cover} we call a Riemann
surface   $M_N(a_1,\dots,a_4)$,   with  parameters  $N,a_1,\dots,a_4$
satisfying relations~\eqref{eq:a1:a4}.
\smallskip

%-------------------------------------------------
\noindent\textbf{Square-tiled surface associated to a cyclic cover.}
Any  meromorphic  quadratic  differential  $q(z)(dz)^2$  with at most
simple poles on a Riemann surface defines a flat metric $g(z)=|q(z)|$
with  conical  singularities  at  zeroes and poles of $q$. Consider a
meromorphic quadratic differential
\begin{equation}
\label{eq:q:on:CP1}
q_0=\frac{
  {c_0}
(dz)^2}{(z-z_1)(z-z_2)(z-z_3)(z-z_4)}\,, \quad c_0\in \C{}\setminus\{0\}\,,
\end{equation}
on  $\CP$.  It  has  simple  poles  at $z_1,z_2,z_3,z_4$ and no other
zeroes  or  poles.  The  quadratic  differential $q_0$ defines a flat
metric  on  a  sphere  obtained  by  identifying  two  copies  of  an
appropriate      parallelogram     by     their     boundary,     see
Figure~\ref{fig:flat:sphere}.

\begin{figure}[htb]
\includegraphics{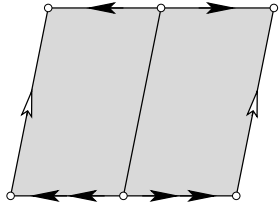}
\vspace{80pt}
\caption{
\label{fig:flat:sphere}
Flat sphere with four conical singularities having cone angles $\pi$.
}
\end{figure}

For a convenient choice of parameters $z_1,\dots,z_4$
{and $c_0$}
the  parallelogram  becomes  the  unit  square.  Metrically, we get a
square  pillow  with  four corners corresponding to the four poles of
$q_0$.

Now  consider  some  cyclic cover $\MNa$ and the canonical projection
$p:\MNa\to\CP$.  Consider an induced quadratic differential $q=p^\ast
q_0$  on  $\MNa$  and the corresponding flat metric. By construction,
the  resulting  flat  surface  is  tiled  with unit squares. In other
words, we get a \textit{square-tiled surface}.

Throughout  this  paper we work only with such square-tiled (or, more
generally,  parallelogram-tiled)  cyclic  covers. We refer the reader
to~\cite{Forni:Matheus:Zorich}  for  a  description  of the geometry,
topology and combinatorics of square-tiled cyclic covers.

\begin{Convention}
\label{conv:naming:ramification:points}
It  would  be convenient to ``give names'' to the ramification points
$z_1,z_2,z_3,z_4$.  In  other  words, throughout this paper we assume
that $z_i, z_j$ are distinguishable even if $a_i=a_j$.
\end{Convention}

Varying  the  cross-ratio  $(z_1:z_2:z_3:z_4)$  (e.g.  keeping 3 of 4
points fixed and varying the fourth point) we obtain the moduli curve
which we denote by
$$
\mathcal{M}_{(a_i),N}\,\,.
$$
As   an   abstract   curve   it  is  isomorphic  to  $\cM_{0,4}\simeq
\C{}\!P^1-\{0,1,\infty\}$. Strictly speaking, it should be considered
as a stack, because every curve $\MNa$ with named ramification points
has  an  automorphism  group  canonically isomorphic to $\Z/N\Z$. The
naive notion of an orbifold is not sufficient here because the moduli
stack  in  our situation is isomorphic to the quotient of $\cM_{0,4}$
by  the  {\it  trivial}  action of $\Z/N\Z$. In what follows we shall
treat  $\mathcal{M}_{(a_i),N}$  as  a  plain  curve,  in  order to be
elementary.

The   curve   $\mathcal{M}_{(a_i),N}$   maps  onto  the  image  of  a
Teichm\"uller  disc in the moduli stack of Abelian differentials with
certain  multiplicities of zeroes, and forgetting the differential we
obtain     a     Tecihm\"uller     geodesic    in    $\cM_g$    where
$g=g(N,a_1,\dots,a_4)$  is  the genus of curve $M_N(a_1,a_2,a_3,a_4)$
(see   Section   2.1   for   an  explicit  formula  for  $g$).  Hence
$\mathcal{M}_{(a_i),N}$  is  a Teichm\"uller curve (e.g. in the sense
of \cite{Bouw:Moeller}). This is the main object of our study.

%*****************************************************************
%*****************************************************************
%*****************************************************************
\section{Statement of results}

\subsection{Splitting of the Hodge bundle}
Consider a cyclic cover
$$
X=\MNa
$$
over  $\CP$ defined by equation~\eqref{eq:cyclic:cover}. Consider the
canonical generator $T$ of the group of deck transformations; let
$$
T^\ast: H^1(X;\C{})\to H^1(X;\C{})
$$
be      the      induced      action     in     cohomology.     Since
$(T^\ast)^N=\operatorname{Id}$, the eigenvalues of $T^\ast$ belong to
a       subset      of      $\{\zeta,\dots,\zeta^{N-1}\}$,      where
$\zeta=\exp\left(\dfrac{2\pi  i}{N}\right)$.  We  excluded  the  root
$\zeta^0=1$   since   any   cohomology  class  invariant  under  deck
transformations  would  be a pullback of a cohomology class on $\CP$,
and $H^1(\CP)=0$.

For $k=1,\dots,N-1$ denote
$$
V(k)(X):=\operatorname{Ker}(T^\ast-\zeta^k\operatorname{Id})
\subset H^1(X;\C{})\ .
$$
The decomposition
$$
H^1(X;\C{})=\oplus V(k)(X)\,,
$$
is  preserved  by  the Gauss-Manin connection, which implies that the
vector bundle $H^1_{\C{}}$ over the Teichm\"uller curve splits into a
sum of invariant subbundles $V(k)$.

Denote
$$
V^{1,0}(k):=V(k)\cap H^{1,0}\quad
\text{ and }\quad
V^{0,1}(k):=V(k)\cap H^{0,1}\ .
$$
Since  a  generator $T$ of the group of deck transformations respects
the complex structure, it induces a linear map
$$
T^\ast: H^{1,0}(X)\to H^{1,0}(X)\,.
$$
This map preserves the Hermitian form~\eqref{eq:intersection:form} on
$H^{1,0}(X)$.  This  implies  that $T^\ast$ is a unitary operator on
$H^{1,0}(X)$, and hence $H^{1,0}(X)$ admits a splitting into a direct
sum of eigenspaces of $T^\ast$,
\begin{equation}
\label{eq:H10:direct:sum:for:k}
H^{1,0}(X)=\bigoplus_{k=1}^{N-1}V^{1,0}(k)(X)\ .
\end{equation}
The  latter observation also implies that for any $k=1,\dots,N-1$ one
has    $V(k)=V^{1,0}(k)\oplus    V^{0,1}(k)$.   The   vector   bundle
$V^{1,0}(k)$  over the Teichm\"uller curve is a holomorphic subbundle
of $H^1_{\C{}}$.

Denote
\begin{equation}
\label{eq:tik}
t_i(k)=\left\{\frac{a_i}{N}k\right\}\ ,k=1,\dots,N-1\ ,
\end{equation}
where $\{x\}$ denotes the fractional part of $x$. Let
\begin{equation}
\label{eq:tk}
t(k)=t_1(k)+\dots+t_4(k)\ .
\end{equation}
Since  $(a_1+\dots+a_4)/N\in\{1,2,3\}$ and $\gcd(N,a_1,\dots,a_4)=1$,
we get\ $t(k)\in\{1,2,3\}$.

\begin{NNTheorem}[I.~Bouw]
For any $k=1,\dots,N-1$ one has
\begin{align}
\label{eq:dim:duality}
&\dim_{\C{}} V^{1,0}(k)=\dim_{\C{}} V^{0,1}(N-k)= t(N-k)-1\\
\label{eq:dim:Vk}
&\dim_{\C{}} V(k)=t(k)+t(N-k)-2\in\{0,1,2\}
\end{align}
\end{NNTheorem}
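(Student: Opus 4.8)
The plan is to realize each eigenspace $V^{1,0}(k)$ explicitly as a space of rational functions on $\CP$ cut out by local vanishing conditions, and then to count that space by a Riemann--Roch–type argument on the base. First I would identify the eigenforms. Since the deck transformation acts by $T^\ast w=\zeta w$ and fixes $z$, the function field of $X$ splits as $\bigoplus_{j=0}^{N-1}\C{}(z)\,w^j$ and every meromorphic $1$-form is $f(z,w)\,dz$; projecting onto the $\zeta^k$-isotypic part shows that each eigenform with eigenvalue $\zeta^k$ has the shape
$$
\omega=\frac{P(z)\,dz}{w^{\,N-k}},\qquad P\in\C{}(z),
$$
because $T^\ast\!\bigl(dz/w^{N-k}\bigr)=\zeta^{-(N-k)}dz/w^{N-k}=\zeta^{k}\,dz/w^{N-k}$, and powers of $w^N=\prod_i(z-z_i)^{a_i}$ can be absorbed into $P$. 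Thus $\dim_{\C{}}V^{1,0}(k)$ is the dimension of the space of $P$ making this $\omega$ holomorphic on all of $X$.

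Next I would run the local analysis. Over a finite branch point $z_i$ the cover has $\gcd(N,a_i)$ preimages, each of ramification index $e_i=N/\gcd(N,a_i)$, and in a local uniformizer $t$ one has $z-z_i\sim t^{e_i}$ and $w\sim t^{a_i/\gcd(N,a_i)}$. Computing $\operatorname{ord}$ of $\omega$ there and requiring it to be nonnegative turns into the condition that $P$ vanish at $z_i$ to order at least $\lfloor (N-k)a_i/N\rfloor$; away from the branch locus holomorphy just forbids poles, so $P$ is a polynomial divisible by $\prod_i(z-z_i)^{\lfloor (N-k)a_i/N\rfloor}$. The parallel computation at the (unramified) points over $\infty$, where $w\sim z^{s}$ with $s=(a_1+\dots+a_4)/N$, yields the single degree bound $\deg P\le s(N-k)-2$. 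Writing $P=\prod_i(z-z_i)^{\lfloor(N-k)a_i/N\rfloor}Q(z)$ reduces the count to polynomials $Q$ of bounded degree, giving
$$
\dim_{\C{}}V^{1,0}(k)=\max\Bigl(0,\ s(N-k)-1-\sum_i\Bigl\lfloor\tfrac{(N-k)a_i}{N}\Bigr\rfloor\Bigr).
$$
The identity $\sum_i\lfloor (N-k)a_i/N\rfloor=s(N-k)-t(N-k)$ then collapses this to $t(N-k)-1$, which is nonnegative since $t(N-k)\in\{1,2,3\}$. As a consistency check, summing over $k$ recovers $\dim H^{1,0}=g$ via the genus formula.

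Finally I would deduce the remaining claims from Hodge symmetry. Complex conjugation is antilinear, commutes with the integrally defined $T^\ast$, and exchanges $H^{1,0}$ with $H^{0,1}$ while carrying the $\zeta^k$-eigenspace to the $\zeta^{-k}=\zeta^{N-k}$-eigenspace; hence it gives an isomorphism $V^{1,0}(k)\cong V^{0,1}(N-k)$, so $\dim V^{0,1}(N-k)=t(N-k)-1$, which is the middle equality in~\eqref{eq:dim:duality}. Applying this with $k$ replaced by $N-k$ gives $\dim V^{0,1}(k)=t(k)-1$, and adding the two summands of $V(k)=V^{1,0}(k)\oplus V^{0,1}(k)$ yields $\dim V(k)=t(k)+t(N-k)-2$. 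For the range $\{0,1,2\}$ I would use that $t_i(k)+t_i(N-k)$ is $1$ or $0$ according as $N\nmid ka_i$ or $N\mid ka_i$, so $t(k)+t(N-k)=\#\{i:\ N\nmid ka_i\}$; the hypotheses $\sum a_i\equiv 0$ and $\gcd(N,a_1,\dots,a_4)=1$ force this count to lie in $\{2,3,4\}$, hence $\dim V(k)\in\{0,1,2\}$.

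I expect the delicate point to be the ceiling/floor bookkeeping in the ramified local computation. One must verify that the correction terms from $\operatorname{ord}(dz)=e_i-1$ and from the fractional part $t_i(N-k)$ of $(N-k)a_i/N$ conspire so that the vanishing order demanded of $P$ is \emph{exactly} $\lfloor (N-k)a_i/N\rfloor$ with no off-by-one error — concretely, that $t_i(N-k)+1/e_i-1\le 0$ always, which follows because $N t_i(N-k)$ is a nonzero-or-zero multiple of $\gcd(N,a_i)$ bounded by $N-\gcd(N,a_i)$. It is precisely this tight estimate that makes the fractional parts $t_i$ surface and the final formula come out clean.
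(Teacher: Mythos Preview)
Your argument is correct and, in its core local computation, identical to the paper's: both write the $\zeta^k$-eigenforms as $P(z)\,dz/w^{N-k}$ (the paper uses $dz/w^k$ and lands in $V^{1,0}(N-k)$, which is just the index swap), compute orders at the branch points and at infinity, and read off the floor/degree constraints that give $t(N-k)-1$. The one substantive difference is in how completeness is established. You argue \emph{a priori} that every meromorphic eigenform lies in $\C{}(z)\cdot w^{-(N-k)}\,dz$ via the Kummer decomposition of the function field, so the constructed forms automatically span $V^{1,0}(k)$. The paper instead argues \emph{a posteriori}: it builds the forms, sums $\sum_k (t(k)-1)$ explicitly, and matches this against the known genus formula $g=N+1-\tfrac12\sum_i\gcd(a_i,N)$ to conclude that nothing is missing. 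Your route is slightly cleaner conceptually and avoids that arithmetic identity; the paper's route has the virtue of giving the genus check as a byproduct. The complex-conjugation step and the $\{0,1,2\}$ range argument are the same in both.
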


%-------------------- Cut out in the short version -------------

For  the  sake  of completeness we provide a proof of this Theorem in
section~\ref{ss:basis}.

%---------------------------------------------------------------

It would be convenient to state the following elementary observation.
\begin{Lemma}
\label{lm:dim:Vk:equals:two}
The eigenspace $V(k)$ has complex dimension
\begin{itemize}
\item
two, if and only if $t_i(k)>0$ for $i=1,2,3,4$;
\item one, if and only if there is exactly one $i\in\{1,2,3,4\}$ such
that $t_i(k)=0$;
\item
zero, if there are two distinct indices $i,j\in\{1,2,3,4\}$ such that
$t_i(k)=t_j(k)=0$.
\end{itemize}
\end{Lemma}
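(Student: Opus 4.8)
The plan is to deduce everything from the dimension formula $\dim_{\C{}} V(k) = t(k) + t(N-k) - 2$ furnished by the Theorem of Bouw, by counting how many of the four fractional parts $t_i(k)$ vanish. The one ingredient needed beyond that Theorem is an elementary complementarity between the indices $k$ and $N-k$.

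First I would observe that
$$t_i(N-k) = \left\{\frac{a_i}{N}(N-k)\right\} = \left\{a_i - \frac{a_i}{N}k\right\} = \left\{-\frac{a_i}{N}k\right\},$$
since $a_i \in \Z$. As $\{x\} + \{-x\}$ equals $0$ when $x \in \Z$ and equals $1$ otherwise, this yields two facts simultaneously: $t_i(k) = 0$ if and only if $t_i(N-k) = 0$ (each being equivalent to $N \mid a_i k$), and $t_i(k) + t_i(N-k) = 1$ whenever $t_i(k) > 0$.

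Next, let $z$ be the number of indices $i \in \{1,2,3,4\}$ with $t_i(k) = 0$. Summing the relation above over the four indices, the $z$ vanishing ones contribute $0$ while each of the remaining $4 - z$ contributes $1$, so $t(k) + t(N-k) = 4 - z$. Substituting into Bouw's formula gives
$$\dim_{\C{}} V(k) = (4 - z) - 2 = 2 - z.$$

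It remains to read off the three cases. Since a dimension is nonnegative, $2 - z \ge 0$ forces $z \le 2$, so $z \in \{0,1,2\}$ exhaust the possibilities; hence $z = 0$ (all $t_i(k) > 0$) gives dimension $2$, $z = 1$ (exactly one vanishing $t_i(k)$) gives dimension $1$, and $z = 2$ gives dimension $0$, which is precisely the stated case of two distinct indices with $t_i(k) = t_j(k) = 0$. The only step requiring any care is ruling out $z \ge 3$. Relying on nonnegativity of the dimension is the quickest route; alternatively one can argue directly that if three of the $t_i(k)$ vanish then, using $\sum_i a_i \equiv 0 \modN$, the fourth vanishes too, whence $N \mid a_i k$ for all $i$, contradicting $\gcd(N, a_1, \dots, a_4) = 1$ together with $1 \le k \le N-1$. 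No genuine obstacle is expected; the whole content is the complementarity identity of the second paragraph.
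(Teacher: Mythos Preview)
Your proof is correct and follows essentially the same route as the paper: both arguments reduce to the observation that $t_i(k)+t_i(N-k)$ equals $1$ when $t_i(k)>0$ and $0$ when $t_i(k)=0$, then sum over $i$ and invoke Bouw's dimension formula. Your introduction of the counter $z$ and the explicit discussion of $z\ge 3$ are mild expository additions, not a different method.
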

\begin{proof}
By formula~\eqref{eq:dim:Vk} of I.~Bouw one has
\begin{multline*}
\dim_{\C{}}V(k)+2\ =\ t(k)+t(N-k)
\ =\\=\
\left(\left\{\frac{a_1}{N}k\right\}+
\left\{\frac{a_1}{N}(N-k)\right\}
\right)+
\dots+
\left(\left\{\frac{a_4}{N}k\right\}+
\left\{\frac{a_4}{N}(N-k)\right\}
\right)
\end{multline*}
see~\eqref{eq:tik}  and~\eqref{eq:tk}. The statement of the Lemma now
follows  from  the  following elementary remark. If $x+y$ is integer,
then
$$
\{x\}+\{y\}=
\begin{cases}
1&\text{when }\{x\}>0,\\
0&\text{otherwise .}
\end{cases}
$$
\end{proof}

The  Teichm\"uller  curve  is  not  compact, but there is a canonical
extension  of  holomorphic  bundles  $V^{1,0}(k)$  to orbifold vector
bundles  at  the  cusps  (see  the next section). Hence, we can speak
about the (orbifold) degree of these bundles.
\begin{Theorem}
\label{th:degree}  For  any  $k$  such  that  $t(N-k)=2$ the orbifold
degree of the line bundle $V^{1,0}(k)$ satisfies:
\begin{equation}
\label{eq:dk:equals:min}
d(k)=\min\big(t_1(k),1-t_1(k),\dots,t_4(k),1-t_4(k)\big) \ .
\end{equation}
Moreover, if $t(N-k)=2$ the following alternative holds: if $t(k)=2$,
then $d(k)>0$; if $t(k)=1$, then $d(k)=0$.
\end{Theorem}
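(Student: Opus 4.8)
The plan is to compute the orbifold degree of the line bundle $V^{1,0}(k)$ by understanding its behavior at the cusps of the Teichm\"uller curve $\mathcal{M}_{(a_i),N}$. Since $\mathcal{M}_{(a_i),N}\simeq\cM_{0,4}\simeq\CP\setminus\{0,1,\infty\}$ has three cusps corresponding to collisions of pairs of the four branch points $z_1,\dots,z_4$, the degree of a line bundle that extends to the cusps is obtained by summing the local degrees of the canonical extension at each of the three boundary points. So first I would fix a holomorphic section of $V^{1,0}(k)$ over the open curve, built from an explicit holomorphic $1$-form on the cyclic cover lying in the eigenspace $V^{1,0}(k)$, and then analyze how its Hodge norm degenerates as the cross-ratio approaches each cusp.

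The key computational input is an explicit basis of eigenforms. Because $t(N-k)=2$ forces $\dim V^{1,0}(k)=1$ by the Bouw dimension formula~\eqref{eq:dim:duality}, the eigenspace is a line, and I expect a generator of the form $\omega_k = w^{-k}\,(z-z_1)^{m_1}\cdots(z-z_4)^{m_4}\,dz$ for suitable integer exponents $m_i$ chosen so that $\omega_k$ is holomorphic and lies in $V^{1,0}(k)$; the requirement that $\omega_k$ be a genuine holomorphic $1$-form (no poles at the ramification points or at infinity) is what pins down the exponents in terms of the fractional parts $t_i(k)$. Next I would examine the pinching cusp where, say, $z_3\to z_4$: locally the surface develops a node, and the rate at which the Hodge norm $\|\omega_k\|^2 = \langle\omega(v),\omega(v)\rangle$ blows up or decays is governed by the vanishing order of $\omega_k$ at the colliding points, i.e. by the combinations $t_i(k)$ and $1-t_i(k)$. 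The canonical (Deligne) extension prescribes that one rescales the section by a power of the local cusp parameter equal to this norm-growth rate, and the local degree contribution is that exponent.

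The expected shape of the answer, namely $d(k)=\min_i\big(t_i(k),1-t_i(k)\big)$, suggests that the three cusp contributions are not symmetric: at each collision $z_i\to z_j$ the relevant local exponents involve $\{t_i(k)+t_j(k)\}$ type quantities, and after summing the three cusp contributions and using that $t(k)+t(N-k)=4$ (since both equal $2$ here), the total telescopes to a single minimum. The cleanest route to the minimum is probably to observe that the local monodromy eigenvalue around each cusp is $\zeta^{\pm k(a_i+a_j)}$, so the fractional-part data at the three cusps are $\{t_1+t_2\},\{t_1+t_3\},\{t_1+t_4\}$ (equivalently paired), and a short case analysis on which $t_i(k)$ are below or above $1/2$ collapses the sum of local degrees to $\min_i\min(t_i(k),1-t_i(k))$.

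The hard part will be getting the local degree at each cusp exactly right, including the correct normalization of the canonical extension and any orbifold factor coming from the $\Z/N\Z$ automorphisms. In particular I expect the subtlety to lie in distinguishing the two regimes of the final alternative: when $t(k)=2$ all four $t_i(k)>0$ (so $V(k)$ is two-dimensional by Lemma~\ref{lm:dim:Vk:equals:two}) and the minimum is strictly positive, whereas when $t(k)=1$ exactly one $t_i(k)=0$, forcing the minimum to be $0$. Tracking the holomorphicity constraint that distinguishes a vanishing $t_i(k)$ from a nonvanishing one — and verifying that a zero fractional part genuinely kills the degree rather than producing a negative contribution — is where the argument must be handled with care, and I would organize the cusp-by-cusp computation precisely so that this dichotomy emerges transparently from the formula $d(k)=\min\big(t_1(k),1-t_1(k),\dots,t_4(k),1-t_4(k)\big)$.
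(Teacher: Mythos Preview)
Your overall strategy---choose an explicit eigenform as a nowhere-vanishing section of $V^{1,0}(k)$ over the open curve, compute the order of its canonical extension at each of the three cusps, and sum---is exactly what the paper does. The technical route to the local orders is slightly different, however: the paper does not analyze Hodge-norm asymptotics or monodromy eigenvalues, but instead works directly with the degenerating family. At each cusp it introduces explicit coordinates $\tilde z,\tilde w$ on each of the two components of the limiting stable curve over the degenerate $\CP$ (Figure~\ref{fig:four:cusps:sphere}b), and reads off the rational power of the cusp parameter $\epsilon$ by which $\omega$ must be rescaled so that its restriction to that component has a nontrivial limit; the local order is then the \emph{minimum} of the two component exponents. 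This yields the expression~\eqref{eq:deg}, whose simplification to $\min_i\min(t_i(k),1-t_i(k))$ is handled not by a telescoping argument but by a stand-alone elementary identity (the unnumbered Lemma following~\eqref{eq:deg}), proved by first observing that the expression is symmetric in the indices and then ordering $t_1\le t_2\le t_3\le t_4$. Your proposed case analysis on which $t_i$ lie below $1/2$ would reproduce this, and your treatment of the alternative via Lemma~\ref{lm:dim:Vk:equals:two} is precisely the paper's. Either route---Hodge-norm growth or stable-curve limits---computes the Deligne extension; the paper's choice has the advantage that it stays entirely within algebraic coordinates and avoids any analytic estimates on the norm.
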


\begin{Remark}
\label{rm:dim:2:implies:dgree:0}
Note that $t(N-k)\in\{1,2,3\}$.
{Theorem~\ref{th:degree} studies the case, when
$t(N-k)=2$.}
Thus,  there  remain  two  complementary  cases  when $t(N-k)\neq 2$.
Namely,  when  $t(N-k)=1$  we get $\dim V^{1,0}(k)=0$, and the bundle
$V^{1,0}(k)$  is missing. When $t(N-k)=3$ we get $\dim V^{1,0}(k)=2$.
We  shall  show that in this case the orbifold degree of $V^{1,0}(k)$
is equal to zero.
\end{Remark}

%--------------------------------------------------------------

\subsection{On   real  and  complex  variations  of  polarized  Hodge
structures of weight $1$}

Let  $\cC$  be a smooth possibly non-compact complex algebraic curve.
We  recall  that a variation of {\it real} polarized Hodge structures
of  weight $1$  on $\cC$ is given by a real symplectic vector bundle
$\mathcal{E}_{\R{}}$  with  a flat connection $\nabla$ preserving the
symplectic  form,  such  that  every  fiber of $\mathcal E$ carries a
Hermitian  structure  compatible  with  the symplectic form, and such
that     the     corresponding     complex    Lagrangian    subbundle
$\mathcal{E}^{1,0}$   of  the  complexification  $\mathcal{E}_{\C{}}=
\mathcal{E}_{\R{}}\otimes\C{}$ is {\it holomorphic}. The variation is
called  {\it  tame}  if all eigenvalues of the monodromy around cusps
lie  on  the  unit  circle,  and the subbundle $\mathcal{E}^{1,0}$ is
meromorphic  at cusps. For example, the Hodge bundle of any algebraic
family  of  smooth compact curves over $\cC$ (or an orthogonal direct
summand of it) is a tame variation.

Similarly, a variation of {\it complex} polarized Hodge structures of
weight $1$  is given by a complex vector bundle $\mathcal{E}_{\C{}}$
of  rank  $r+s$ (where $r,s$ are nonnegative integers) endowed with a
flat  connection $\nabla$, by a covariantly constant pseudo-Hermitian
form   of   signature   $(r,s)$,   and  by  a  holomorphic  subbundle
$\mathcal{E}^{1,0}$  of  rank  $r$,  such that the restriction of the
form  to  it  is  strictly  positive.  The  condition  of tameness is
completely parallel to the real case.

Any  real  variation  of  rank  $2r$ gives a complex one of signature
$(r,r)$  by  the complexification. Conversely, one can associate with
any        complex       variation       $(\mathcal{E}_{\C{}},\nabla,
\mathcal{E}^{1,0})$  of  signature  $(r,s)$  a real variation of rank
\mbox{$2(r+s)$},  whose  underlying  local  system of real symplectic
vector spaces is obtained from $\mathcal{E}_{\C{}}$ by forgetting the
complex structure.

Let   us  assume  that  the  variation  of  complex  polarized  Hodge
structures of weight $1$ has a unipotent monodromy around cusps. Then
the   bundle   $\mathcal{E}^{1,0}$   admits   a  canonical  extension
$\overline{\mathcal{E}^{1,0}}$   to   the   natural  compactification
$\overline{\mathcal{C}}$.  It  can  be described as follows: consider
first     an     extension     $\overline{\mathcal{E}_{\C{}}}$     of
$\mathcal{E}_{\C{}}$    to   $\overline{\mathcal{C}}$   as   a   {\it
holomorphic} vector bundle in such a way that the connection $\nabla$
will  have  only first order poles at cusps, and the residue operator
at  any  cup is nilpotent (it is called the {\it Deligne extension}).
Then     the    holomorphic    subbundle    $\mathcal{E}^{1,0}\subset
\mathcal{E}_\C{}$     extends     uniquely     as     a     subbundle
$\overline{\mathcal{E}^{1,0}}\subset  \overline{\mathcal{E}_\C{}}$ to
the cusps.

%-------------------------------------------------------------
\subsection{Sum of Lyapunov exponents of an invariant subbundle}
\label{ss:theorem:sum}

Let   $(\mathcal{E}_{\R{}},\nabla,\mathcal{E}^{1,0})$   be   a   tame
variation  of polarized real Hodge structures of rank $2r$ on a curve
$\cC$  with  {\it  negative} Euler characteristic. For example, $\cC$
could  be  an  unramified cover of a general arithmetic Teichm\"uller
curve,  and  $\mathcal  E$  could  be a subbundle of the Hodge bundle
which  is  simultaneously invariant under the Hodge star operator and
under the monodromy.

Using  the  canonical  complete  hyperbolic  metric  on $\cC$ one can
define  the  geodesic  flow  on  $\cC$ and the corresponding Lyapunov
exponents  $\lambda_1\ge  \dots \ge \lambda_{2r}$ for the flat bundle
$(\mathcal{E}_{\R{}},\nabla)$, satisfying the usual symmetry property
$\lambda_{2r+1-i}=-\lambda_i,\,i=1,\dots, r$.

The holomorphic vector bundle $\mathcal{E}^{1,0}$ carries a Hermitian
form, hence its top exterior power $\wedge^r(\mathcal{E}^{1,0})$ is a
holomorphic  line bundle also endowed with a Hermitian metric. Let us
denote  by $\alpha$ the curvature $(1,1)$-form on $\cC$ corresponding
to  this metric, divided by $-(2\pi i)$. The form $\alpha$ represents
the first Chern class of $\mathcal{E}^{1,0}$.

Then we have the following general result:
\begin{NNTheorem}
Under  the  above  assumptions,  the  sum  of  the  top  $r$ Lyapunov
exponents of $V$ with respect to the geodesic flow satisfies
\begin{equation}
\label{eq:sum:lambda}
\lambda_1+\dots+\lambda_r=
\cfrac{2\int_C \alpha}{2g_\cC-2+c_\cC}\,,
\end{equation}
where  we  denote  by  $g_\cC$ --- the genus of $\cC$, and by $c_\cC$
---    the    number    of    hyperbolic    cusps    on   $\cC$.
\end{NNTheorem}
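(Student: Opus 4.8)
The plan is to reduce the statement to a rank-one curvature computation for the determinant line bundle $L:=\wedge^r\mathcal{E}^{1,0}$, and then to identify the Lyapunov exponent attached to $L$ with the normalized degree of $L$ by averaging the logarithmic derivative of the Hodge norm along the geodesic flow. The two external inputs I would rely on are the Oseledets multiplicative ergodic theorem together with ergodicity of the hyperbolic geodesic flow, and Chern--Weil theory computing $\deg L$ as a curvature integral.

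First I would note that $\lambda_1+\dots+\lambda_r$ is the top Lyapunov exponent of the $r$-th exterior power of the cocycle, i.e.\ the Hodge-norm growth rate of a parallel-transported $r$-vector. Choosing the initial $r$-vector inside the maximal positive subbundle $\wedge^r\mathcal{E}^{1,0}$, whose line I denote $L$, realizes this top exponent precisely because $\mathcal{E}^{1,0}$ is the expanding part of the polarization. This reduces the problem to a single scalar quantity: the growth of $\|g_t s\|$ for $s\in L_{x_0}$, where $g_t$ denotes the flat parallel transport of $(\mathcal{E}_{\C{}},\nabla)$ along a unit-speed geodesic in $\cC$ and $\|\cdot\|$ is the Hodge norm.

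Next I would write down the variational formula for this norm. Since $s$ is $\nabla$-flat along the geodesic, the entire change in $\|g_t s\|$ comes from the change of the Hodge metric in the flat trivialization; equivalently, from the fact that $\nabla$ fails to preserve the splitting $\mathcal{E}_{\C{}}=\mathcal{E}^{1,0}\oplus\overline{\mathcal{E}^{1,0}}$, its off-diagonal block being the second fundamental form (Kodaira--Spencer field) of $\mathcal{E}^{1,0}\subset\mathcal{E}_{\C{}}$. Comparing $\nabla$ on $L$ with the Chern connection of $(L,h_L)$ expresses $\tfrac{d}{dt}\log\|g_t s\|$ as an explicit real-analytic function $\Phi$ on the unit tangent bundle $T^1\cC$. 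By Oseledets and ergodicity of the geodesic flow for the finite hyperbolic Liouville measure, the Lyapunov exponent equals the space average of $\Phi$. Integrating $\Phi$ over the circle fibers of $T^1\cC\to\cC$ and using the structure equations of the hyperbolic metric converts this average into the integral over $\cC$ of the first Chern form of $(L,h_L)$, namely $\int_C\alpha=\deg L$.

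Finally, dividing by the total hyperbolic area $\vol(\cC)=2\pi(2g_\cC-2+c_\cC)$ supplied by Gauss--Bonnet, and tracking the numerical constants coming from the Hodge-form normalization and from complexification, produces exactly the factor $2/(2g_\cC-2+c_\cC)$ of~\eqref{eq:sum:lambda}. The main obstacle I anticipate is the non-compactness of $\cC$: one must show that $\Phi$ is integrable, that $\int_C\alpha$ converges and computes the orbifold degree of the Deligne extension $\overline{\mathcal{E}^{1,0}}$, and that no boundary contributions survive at the cusps when the fiberwise integration by parts is performed. This is precisely where the tameness hypothesis and the unipotence of the local monodromy enter: they force the Hodge metric to degenerate at worst logarithmically near each cusp, so that all cusp boundary terms vanish and the Chern--Weil identity $\deg L=\int_C\alpha$ remains valid on the open curve.
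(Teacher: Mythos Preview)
The paper does not prove this theorem. Immediately after the statement it writes: ``This formula was formulated (in a slightly different form) first in~\cite{Kontsevich} and then proved rigorously by G.~Forni~\cite{Forni}.'' It is quoted as a black box from the literature and then applied; there is no argument in the paper to compare your proposal against.

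Your outline is essentially the Kontsevich--Forni strategy those references carry out, so in spirit it matches what the paper invokes. One point worth tightening: the sentence ``choosing the initial $r$-vector inside $\wedge^r\mathcal{E}^{1,0}$ realizes this top exponent precisely because $\mathcal{E}^{1,0}$ is the expanding part of the polarization'' is not quite right as stated. The subbundle $\mathcal{E}^{1,0}$ is not flat, so a $\nabla$-parallel section cannot remain in $L=\wedge^r\mathcal{E}^{1,0}$, and $\mathcal{E}^{1,0}$ is not the Oseledets unstable subspace. In the actual argument one does not single out a special initial vector; rather one differentiates $\log\|\cdot\|$ for an arbitrary flat section, averages over the unit tangent bundle, and uses that the fiberwise average of the first variation is governed by the curvature of the Hodge metric on $L$ (this is where the harmonicity/Laplacian computation enters). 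The identification with $\lambda_1+\cdots+\lambda_r$ then comes from Oseledets applied to a generic vector, not from a distinguished choice in $L$. Apart from this imprecision, your handling of the cusp issues via tameness and the Deligne extension is exactly what the paper alludes to in the paragraphs following the theorem.
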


This  formula  was formulated (in a slightly different form) first in
~\cite{Kontsevich}      and     then     proved     rigorously     by
G.~Forni~\cite{Forni}.

Note  that  a  similar  result  holds  also for \textit{complex} tame
variations  of polarized Hodge structures. Namely, for a variation of
signature  $(r,s)$ one has $r+s$ Lyapunov exponents with sum equal to
$0$:
$$
\lambda_1\ge \dots\ge \lambda_r\ge 0\ge
        \lambda_{r+1}\ge\dots\ge \lambda_{r+s}\,.
$$
The  collection (with multiplicities) $\{\lambda_1,\dots,\lambda_r\}$
will  be  called  the  \textit{non-negative}  part  of  the  Lyapunov
spectrum.   We   claim   that   the  sum  of  non-negative  exponents
$\lambda_1+\dots+\lambda_r$     is     again     given     by     the
formula~\eqref{eq:sum:lambda}.

The  proof follows from the simple observation that one can pass from
a complex variation to a real one by taking the underlying real local
system.  Both  the  sum of non-negative exponents and the integral of
the Chern form are multiplied by two under this procedure.

The  denominator  in  the  above  formula is equal to minus the Euler
characteristic  of $\cC$, i.e. to the area of $\cC$ up to a universal
factor   $2\pi$.  The  numerator  also  admits  an  algebro-geometric
interpretation  for  variations  of  real Hodge structures arising as
direct  summands  of  Hodge bundles for algebraic families of curves.
Namely,  let  us  assume that the monodromy of $(\mathcal{E},\nabla)$
around  any  cusp  is unipotent (this can be achieved by passing to a
finite  unramified  cover  of  $\cC$).  Then  one  has  the following
identity (see e.g. Proposition 3.4 in~\cite{Peters}):
$$
\int_{\mathcal C} \alpha= \deg\overline{\mathcal{E}^{1,0}}\,.
$$
In  general, without the assumption on unipotency, we obtain that the
integral  (in  the  numerator)  is  a  rational  number, which can be
interpreted  as  an  orbifold  degree  in  the following way. Namely,
consider  an  unramified  Galois  cover $\mathcal{C}'\to \mathcal{C}$
such  that  the  pullback  of  $(\mathcal{E},\nabla)$ has a unipotent
monodromy.  Then the compactified curve $\overline{\mathcal{C}}$ is a
quotient  of  $\overline{\mathcal{C}'}$ by a finite group action, and
hence  is  endowed  with  a natural orbifold structure. Moreover, the
holomorphic Hodge bundle on $\overline{\mathcal{C}'}$ will descend to
an  orbifold bundle on $\overline{\mathcal{C}}$. Then the integral of
$\alpha$  over  $\mathcal  C$ is equal to the orbifold degree of this
bundle.

The  choice  of the orbifold structure on $\overline{\mathcal{C}}$ is
in  a  sense  arbitrary,  as we can choose the cover $\mathcal{C}'\to
\mathcal{C}$  in  different  ways. The resulting orbifold degree does
not  depend  on  this  choice.  The  corresponding  algebro-geometric
formula  for  the  denominator given as an orbifold degree, is due to
I.~Bouw  and  M.~M\"oller  in~\cite{Bouw:Moeller}.  In  our  concrete
example          of          the          Teichm\"uller         curve
$\mathcal{C}=\mathcal{M}_{(a_i),N}\simeq
\C{}P^1\setminus\{0,1,\infty\}$  an explicit convenient choice of the
cover  is  the  standard  Fermat  curve  $\mathcal{C}'=F_N$  given by
equation  $x^N+y^N=1,\,\,x,y\in{\C{}}^*$,  with  the  projection  map
$(x,y)\mapsto x^N\in \C{}\setminus\{0,1\}$.

%--------------------------------------------------------------
\subsection{Lyapunov spectrum for cyclic covers}
\label{ss:Lyapunov:spectrum:for:cyclic:covers}

Recall  that  we have a decomposition of the holomorphic Hodge bundle
over $\mathcal{C}=\mathcal{M}_{(a_i),N}$ into a direct sum
$$
H^{1,0}=\bigoplus_{1\le k\le N-1}V^{1,0}(k)\,,
$$
coming   from  the  decomposition  of  {\it  complex}  variations  of
polarized  Hodge  structures.  It  induces  a  decomposition  of  the
variation of {\it real} polarized Hodge structures
$$
H^1_{\R{}}=\bigoplus_{1\le k\le N/2} W_{\R{}}(k)
$$
in a way described below.

First  consider the case where $k$ is an integer such that $1\le k\le
N-1,\,k\ne N/2$. By $W(k)\subset H^1_{\R{}}$ denote the projection of
the  subbundle  $V(k)\oplus  V(N-k)\subset  H^1_{\C{}}$  to the first
summand in the canonical decomposition $H^1_{\C{}}=H^1_{\R{}}\oplus i
H^1_{\R{}}$.

By  definition  $W(N-k)=W(k)$. Note that the roots of unity $\zeta^k$
and   $\zeta^{N-k}$   are  complex  conjugate.  Hence,  the  subspace
$V(k)\oplus  V(N-k)$  is  invariant  under  complex conjugation, and,
thus,
$$
W_{\C{}}(k)=W_{\C{}}(N-k)=V(k)\oplus V(N-k)\ .
$$

Since    the    bundle    $H^{1,0}$    decomposes   into   a   direct
sum~\eqref{eq:H10:direct:sum:for:k} of eigenspaces, we conclude that
$$
W_{\C{}}(k)=W^{1,0}(k)\oplus W^{0,1}(k)\ ,
$$
where
\begin{align*}
W^{1,0}(k)&=V^{1,0}(k)\oplus V^{1,0}(N-k)\\
W^{0,1}(k)&=V^{0,1}(k)\oplus V^{0,1}(N-k)\ .
\end{align*}
The   subbundle  $W(k)$  of  $H^1_{\R{}}$  is  Hodge  star-invariant.

Note  that  the  subbundles  $V(k),  V(N-k)\subset H^1_{\C{}}$ of the
Hodge      bundle      and      the      canonical      decomposition
$H^1_{\C{}}=H^1_{\R{}}\oplus  iH^1_{\R{}}$  are  covariantly constant
with  respect  to  the  Gauss--Manin connection. Hence, the subbundle
$W(k)$  is also covariantly constant with respect to the Gauss--Manin
connection.

When  $N$  is even, denote by $W(N/2)$ the projection of the
subbundle
$V(N/2)\subset  H^1_{\C{}}$  to  the  first summand in the canonical
decomposition  $H^1_{\C{}}=H^1_{\R{}}\oplus i H^1_{\R{}}$. Similar
to the previous case,
$$
W_{\C{}}(N/2)=V(N/2)=V^{1,0}(N/2)\oplus V^{0,1}(N/2)\ ;
$$
$W(N/2)\subset  H^1_{\R{}}$  is  Hodge star-invariant and covariantly
constant with respect to the Gauss--Manin connection.

The   decomposition   of  $H^1_{\C{}}$  into  a  direct  sum  of  the
eigenspaces  $V(k)$  implies  a  decomposition  into  a direct sum of
variations of real polarized Hodge structures of weight $1$
$$
H^1_{\R{}}=\bigoplus_{1\le k\le N/2} W(k)\,.
$$

Now everything is ready to formulate the Main Theorem. Recall that by
definition~\eqref{eq:tk}   of   $t(k)$  one  has  $t(k),  t(N-k)  \in
\{1,2,3\}$. By formula~\eqref{eq:dim:Vk} of I.~Bouw one has
$$
\dim_\C{} V(k)+2=t(k)+t(N-k)\in\{2,3,4\}\,.
$$
Thus,  the  Theorem  below describes all possible combinations of the
values of $t(k)$ and $t(N-k)$.

\begin{Theorem}
\label{th:main:theorem}
For any integer $k$ such that $1\le k< N/2$ the Lyapunov exponents of
the  invariant subbundle $W(k)$ of the Hodge bundle $H^1_{\R{}}$ over
$\cC$  with  respect  to  the geodesic flow on $\cC$ are described as
follows.
\noindent $\bullet$
If $t(k)=3$, then $t(N-k)=1$, and
$$
\dim_{\C{}} V(k)=\dim_{\C{}}V^{0,1}(k)=2\,.
$$
If $t(N-k)=3$, then $t(k)=1$, and
$$
\dim_{\C{}} V(k)=\dim_{\C{}}V^{1,0}(k)=2\,.
$$
In both cases $\dim_{\R{}} W(k)=4$ and all four Lyapunov exponents of
the vector bundle $W(k)$ are equal to zero.

\noindent $\bullet$
If $t(k)=2$ and $t(N-k)=1$, then
$$
\dim_{\C{}} V(k)=\dim_{\C{}}V^{0,1}(k)=1\,.
$$
If
$t(N-k)=2$ and $t(k)=1$, then
$$
\dim_{\C{}} V(k)=\dim_{\C{}}V^{1,0}(k)=1\,.
$$
In both cases $\dim_{\R{}} W(k)=2$ and both Lyapunov exponents of the
vector bundle $W(k)$ are equal to zero.

\noindent $\bullet$
If $t(N-k)=t(k)=2$, then
$$
\dim_{\C{}} V^{1,0}(k)=\dim_{\C{}}V^{0,1}(k)=1\,,
$$
and  $\dim_{\R{}}  W(k)=4$. In this case the Lyapunov spectrum of the
vector  bundle  $W(k)$  is  equal to $\{2d(k),2d(k),-2d(k),-2d(k)\}$,
where
$$
d(k)=\min\big(t_1(k),1-t_1(k),\dots,t_4(k),1-t_4(k)\big) > 0
$$
is the orbifold degree of the line bundle $V^{1,0}(k)$.

\noindent $\bullet$
If $t(N-k)=t(k)=1$, then $V(k), V(N-k)$, and $W(k)$ vanish.

\noindent  $\bullet$
Finally,  if  $N$  is  even, and all $a_i$ are odd, then $\dim_{\R{}}
W(N/2)=2$.  In  this  case the Lyapunov spectrum of the vector bundle
$W(N/2)$ is equal to $\{1,-1\}$.

\noindent $\bullet$ If $N$ is even, but at least one of $a_i$ is also
even, then $W(N/2)$ vanishes.
\end{Theorem}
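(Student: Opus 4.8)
The plan is to recognize each $W(k)$ as the real variation of Hodge structure underlying the complex variation $V(k)$, and to read off its Lyapunov spectrum from the general formula~\eqref{eq:sum:lambda} of Section~\ref{ss:theorem:sum}, fed by the dimension count of I.~Bouw and by Theorem~\ref{th:degree}. First I record the decisive simplification: since $\cC=\mathcal{M}_{(a_i),N}\simeq\CP\setminus\{0,1,\infty\}$ has genus $g_\cC=0$ and $c_\cC=3$ cusps, the denominator $2g_\cC-2+c_\cC$ in~\eqref{eq:sum:lambda} equals $1$. Hence for every tame sub-variation $\mathcal E$ the sum of its non-negative exponents is exactly $2\deg\overline{\mathcal E^{1,0}}$ (orbifold degree). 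By Bouw's formulas~\eqref{eq:dim:duality} the complex variation $V(k)$ has signature $(p_k,q_k)=\big(t(N-k)-1,\,t(k)-1\big)$ with positive (holomorphic) part $V^{1,0}(k)$, and I keep in mind that $t(k),t(N-k)\in\{1,2,3\}$ with $t(k)+t(N-k)=\dim_{\C{}}V(k)+2$ by~\eqref{eq:dim:Vk}.

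Next I pass from complex to real. For $1\le k<N/2$ the roots $\zeta^k,\zeta^{N-k}$ are complex conjugate, so $V(N-k)=\overline{V(k)}$ and $W_{\C{}}(k)=V(k)\oplus\overline{V(k)}$ is the complexification of the real local system underlying $V(k)$; thus $W(k)$ \emph{is} that underlying real variation. By the observation recorded in Section~\ref{ss:theorem:sum}, passing to the underlying real system doubles both the Chern integral and the sum of non-negative exponents, and more precisely each complex exponent of $V(k)$ reappears as a real exponent of $W(k)$ with twice its multiplicity. When $N$ is even and $k=N/2$ the root $\zeta^{N/2}=-1$ is real, $V(N/2)$ is already self-conjugate, and $W(N/2)$ is simply its real form, with no doubling. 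In every case the problem is thereby reduced to the complex spectrum of $V(k)$, which by the theory of Section~\ref{ss:theorem:sum} consists of $p_k+q_k$ exponents summing to $0$, whose non-negative part (of $p_k$ terms) sums to $2\deg\overline{V^{1,0}(k)}$.

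I then run through the values of $\big(t(k),t(N-k)\big)$. Whenever one of $p_k,q_k$ vanishes, i.e. exactly one of $t(k),t(N-k)$ equals $1$, the covariantly constant polarization restricts to a \emph{definite} Hermitian form, so $V(k)$ is a unitary local system: all of its complex exponents vanish and $\deg\overline{V^{1,0}(k)}=0$ (this also yields the degree-zero assertion of Remark~\ref{rm:dim:2:implies:dgree:0} in the rank-two case $t(N-k)=3$). Doubling gives the claimed all-zero spectrum of $W(k)$, of real rank $2(p_k+q_k)\in\{2,4\}$, covering the first two bullets; and $t(k)=t(N-k)=1$ forces $\dim_{\C{}}V(k)=0$, so $V(k),V(N-k),W(k)$ vanish. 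The only hyperbolic case is $t(k)=t(N-k)=2$: here $V(k)$ has signature $(1,1)$, its spectrum is $\{\mu,-\mu\}$ with $\mu=2\deg\overline{V^{1,0}(k)}$, and since $t(N-k)=2$ with $t(k)=2$ Theorem~\ref{th:degree} evaluates $\deg\overline{V^{1,0}(k)}=d(k)=\min_i\big(t_i(k),1-t_i(k)\big)>0$; doubling produces $\{2d(k),2d(k),-2d(k),-2d(k)\}$.

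It remains to treat $k=N/2$ for $N$ even. Here $t_i(N/2)$ equals $\tfrac12$ when $a_i$ is odd and $0$ when $a_i$ is even; the congruence $\sum a_i\equiv0\modN$ makes the number of odd $a_i$ even, and $\gcd(N,a_1,\dots,a_4)=1$ rules out all $a_i$ even, so this number is $2$ or $4$. If two $a_i$ are even then two of the $t_i(N/2)$ vanish, whence $V(N/2)=0$ by Lemma~\ref{lm:dim:Vk:equals:two} and $W(N/2)=0$. If all $a_i$ are odd then $t(N/2)=2$, $V(N/2)$ has signature $(1,1)$ and real rank $2$, and Theorem~\ref{th:degree} applied at $k=N/2$ (where $t(N-k)=t(N/2)=2$) gives $d(N/2)=\tfrac12$, so the unique non-negative exponent is $2d(N/2)=1$ and the spectrum is $\{1,-1\}$. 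The principal difficulty is not any single case but the bookkeeping that matches the Hodge data---the signature, which summand is holomorphic, and the real-versus-complex normalization---to the single formula~\eqref{eq:sum:lambda}; the vanishing in the pure-type cases rests on the fact that a weight-one polarized variation of pure type is unitary, while the one nontrivial value rests entirely on Theorem~\ref{th:degree}, whose compatibility under $k\leftrightarrow N-k$ follows from $t_i(N-k)=1-t_i(k)$ (for $t_i(k)>0$), giving $d(N-k)=d(k)$.
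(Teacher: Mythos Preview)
Your proof is correct and follows essentially the same route as the paper's: reduce to the complex variations $V(k)$, observe that the denominator in~\eqref{eq:sum:lambda} equals $1$ for $\cC\simeq\CP\setminus\{0,1,\infty\}$, note that every signature other than $(1,1)$ makes the local system unitary (hence all exponents vanish), and in the $(1,1)$ case read off the unique non-negative exponent $2d(k)$ from Theorem~\ref{th:degree}, then pass back to $W(k)$ via $W_{\C{}}(k)=V(k)\oplus V(N-k)$. Your treatment of the $k=N/2$ case via the parity of the $a_i$ and Lemma~\ref{lm:dim:Vk:equals:two} is a bit more explicit than the paper's, but the argument is the same.
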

\begin{proof}
The  Lyapunov exponents of the real flat bundle $H^1_{\R{}}$ coincide
with  the  Lyapunov  exponents  of  its complexification $H^1_{\C{}}$
considered  as  a complex flat bundle (as eigenvalues and their norms
for  a  real matrix coincide with those of its complexification). The
flat  bundle  $H^1_{\C{}}$  is  decomposed  into a direct sum of flat
subbundles  $V(k),\,k=1,\dots  N-1$, underlying complex variations of
polarized  Hodge  structures.  Hence,  the  non-negative  part of the
Lyapunov  spectrum  for  $H^1_{\R{}}$  is  the  sum  over  $k$ of the
non-negative  parts of the spectra of the individual summands $V(k)$.
For  any  given  $k$  we  have  six  possibilities  for the signature
$(r,s)=\big(t(N-k)-1\,,\,t(k)-1\big)$ of the variation $V(k)$:
\begin{equation}
\label{eq:6cases}
(0,0),(1,0),(0,1),(2,0),(1,1),(0,2)
\end{equation}
according  to the theorem of I.~Bouw. In all cases except the case of
signature  $(1,1)$  the  corresponding local system is {\it unitary},
hence  all  Lyapunov  exponents  are zero. In the non-trivial case of
signature $(1,1)$ the unique non-negative Lyapunov exponent coincides
obviously  with  the sum of non-negative Lyapunov exponents and hence
can  be  calculated by formula~\eqref{eq:sum:lambda}. The denominator
in  this  formula  is  equal  to  $1$ because our Teichm\"uller curve
$\mathcal{C}=\mathcal{M}_{(a_i),N}$  is a sphere with 3 punctures and
has  Euler  characteristics $-1$. The numerator is twice the orbifold
degree  of  the determinant of the holomorphic bundle $V^{(1,0)}(k)$.
The  latter orbifold degree is the same as the orbifold degree of the
\textit{line}  bundle $V^{(1,0)}(k)$ (because $V^{(1,0)}(k)$ has rank
one),  and  is  equal to $d(k)$ by Theorem~\ref{th:degree}. Hence, we
conclude   that  the  contribution  of  the  summand  $V(k)$  to  the
non-negative part of the Lyapunov spectrum for $H^1_{\R{}}$ for cases
listed in~\eqref{eq:6cases} is given by
$$
\emptyset,\ \{0\},\ \emptyset,\ \{0,0\},\ \{2 d(k)\},\ \emptyset\,.
$$
Finally,  we  can find the non-negative Lyapunov spectrum of the real
variations  $W(k),\,1\le  k\le N/2$, using the fact that $W(k)\otimes
\C{}=V(k)\oplus  V(N-k)$ for $k< N/2$ and $W(N/2)\otimes \C{}=V(N/2)$
for  even $N$. Also, notice that in the non-trivial case of signature
$(1,1)$  one  has  $d(k)=d(N-k)>0$.  The  whole Lyapunov spectrum for
$W(k)$  is  obtained  from  the  non-negative  part  by adding a copy
reflected at zero.

\end{proof}

To    illustrate    Theorem~\ref{th:main:theorem}   we   present   in
Appendix~\ref{a:table}  a  table  of  quantities discussed above in a
particular case of $M_{30}(3,5,9,13)$.

\begin{Corollary}
\label{cr:alorithm}
The  nonnegative part $\{\lambda_1,\dots,\lambda_g\}$ of the Lyapunov
spectrum   of  the  Hodge  bundle  $H^1_{\R{}}$  over  an  arithmetic
Teichm\"uller   curve   corresponding   to   a   square-tiled  cyclic
cover~\eqref{eq:cyclic:cover}   can  be  obtained  by  the  following
algorithm.        Start        with        the        empty       set
$\operatorname{{\Lambda}Spec}=\emptyset$.          For          every
$k\in\{1,\dots,N-1\}$ compute $t(k)$ and proceed as follows:
\begin{itemize}
\item
if $t(k)=3$, then add a pair of zeroes to $\operatorname{{\Lambda}Spec}$;
\item
If $t(k)=2$, then add a number $2d(k)$ to $\operatorname{{\Lambda}Spec}$;
\item
If $t(k)=1$, then do not change $\operatorname{{\Lambda}Spec}$.
\end{itemize}
The  resulting unordered set $\operatorname{{\Lambda}Spec}$ coincides
with the unordered set $\{\lambda_1,\dots,\lambda_g\}$.
\end{Corollary}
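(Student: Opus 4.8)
The plan is to read the Corollary off from the per-summand analysis already carried out in the proof of Theorem~\ref{th:main:theorem}, the only genuinely new ingredient being a reindexing $k\mapsto N-k$. That proof shows that the non-negative part of the Lyapunov spectrum of $H^1_{\R{}}$ is the union, with multiplicities, over all $k=1,\dots,N-1$ of the non-negative parts of the spectra of the complex summands $V(k)$, and that the latter is governed by the signature $(r,s)=\big(t(N-k)-1,\,t(k)-1\big)$ supplied by the theorem of I.~Bouw and by formula~\eqref{eq:dim:Vk}. The first step I would take is to notice that this per-summand contribution depends only on $t(N-k)$. Indeed, since $\dim_{\C{}}V(k)=t(k)+t(N-k)-2\le 2$, the value $t(N-k)=3$ forces $t(k)=1$ and signature $(2,0)$, a unitary block contributing $\{0,0\}$; the value $t(N-k)=1$ gives $r=0$ and an empty contribution whatever $t(k)$ is; and the value $t(N-k)=2$ forces $t(k)\in\{1,2\}$, where the two subcases are uniformly described by $\{2d(k)\}$ because of the alternative in Theorem~\ref{th:degree}: the block is $(1,1)$ with $d(k)>0$ when $t(k)=2$, and unitary $(1,0)$ with $d(k)=0$ when $t(k)=1$.

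Having reduced the contribution of $V(k)$ to the trichotomy $\emptyset$, $\{2d(k)\}$, $\{0,0\}$ for $t(N-k)=1,2,3$, I would reindex the union by the involution $k\mapsto N-k$ of $\{1,\dots,N-1\}$. This rewrites the total as the union over $k$ of the contribution attached to $V(N-k)$, now governed by $t(k)$ and equal to $\emptyset$, $\{2d(N-k)\}$, $\{0,0\}$ according as $t(k)=1,2,3$. The only identity to verify is the symmetry $d(N-k)=d(k)$; it follows from the defining formula~\eqref{eq:dk:equals:min} together with the elementary relations $t_i(N-k)=1-t_i(k)$ if $t_i(k)>0$ and $t_i(N-k)=t_i(k)=0$ otherwise, which show that the pair $\{t_i(N-k),\,1-t_i(N-k)\}$ equals $\{t_i(k),\,1-t_i(k)\}$ for each $i$. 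With $d(N-k)=d(k)$ substituted, the three cases become precisely the three bullet rules of the algorithm, applied once for each $k\in\{1,\dots,N-1\}$; in particular the self-conjugate index $k=N/2$ (when $N$ is even) is handled uniformly, with no need to isolate it, since $t(N/2)\in\{1,2\}$ always and the block $V(N/2)$ is exactly of signature $(t(N/2)-1,t(N/2)-1)$.

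It then remains to check two pieces of bookkeeping confirming that the two prescriptions agree as unordered multisets. The cardinalities match because $\sum_{k}(t(N-k)-1)=\sum_k(t(k)-1)=g$, using $\sum_k\dim_{\C{}}V(k)=\dim_{\C{}}H^1_{\C{}}=2g$ together with~\eqref{eq:dim:Vk}; thus the algorithm outputs exactly $g$ entries, consistent with Convention~\ref{conv:nonnegative:part}. The zero-exponent convention causes no trouble either: since $H^1_{\R{}}=\bigoplus_{1\le k\le N/2}W(k)$ is a direct sum of real polarized sub-variations, each with a spectrum symmetric about zero, discarding ``half of the zeros'' commutes with the direct sum, so assembling the answer summand by summand is legitimate. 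I expect no analytic difficulty anywhere in this deduction, since all of the analysis is already packaged in Theorem~\ref{th:degree} and in the Forni-type formula~\eqref{eq:sum:lambda} used to prove Theorem~\ref{th:main:theorem}. The one place I would write out with care is the uniform treatment of the $t(N-k)=2$ stratum, where the single symbol $2d(k)$ must silently absorb both the nontrivial exponent $2d(k)>0$ and the vanishing exponent $0$; this is exactly the content of the alternative asserted in Theorem~\ref{th:degree}, and it is the hinge on which the clean three-case algorithm rests.
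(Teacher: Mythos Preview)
Your proposal is correct and is exactly the natural deduction the paper leaves implicit: the paper gives no separate proof of the Corollary, presenting it as an immediate consequence of Theorem~\ref{th:main:theorem}, and your argument---taking the per-$V(k)$ contributions $\emptyset,\{0\},\emptyset,\{0,0\},\{2d(k)\},\emptyset$ from that proof, collapsing the $t(N-k)=2$ subcases via the alternative in Theorem~\ref{th:degree}, and reindexing by $k\mapsto N-k$ using the symmetry $d(N-k)=d(k)$ of formula~\eqref{eq:dk:equals:min}---is precisely how one makes that implication explicit. Your bookkeeping checks (cardinality $g$, uniform treatment of $k=N/2$, compatibility with Convention~\ref{conv:nonnegative:part}) are all in order.
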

Here  the ``nonnegative part'' of the Lyapunov spectrum is understood
in the sense of Convention~\ref{conv:nonnegative:part}.

The proof of the Corollary below is absolutely elementary, so we omit
it.

\begin{Corollary}
When  $N$  is  even  and all $a_i$ are odd, the top Lyapunov exponent
$\lambda_1=1$  is simple, $1=\lambda_1>\lambda_2$. All other strictly
positive  Lyapunov exponents of the Hodge bundle $H^1_{\R{}}$ over an
arithmetic Teichm\"uller curve corresponding to a square-tiled cyclic
cover~\eqref{eq:cyclic:cover}  are  strictly  less  than $1$ and have
even multiplicity.
\end{Corollary}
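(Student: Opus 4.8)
The plan is to read the strictly positive Lyapunov exponents directly off Theorem~\ref{th:main:theorem} and then isolate the unique source of the exponent $1$. First I would recall from that theorem that, for $1\le k<N/2$, the real subbundle $W(k)$ produces a strictly positive exponent \emph{only} when $t(k)=t(N-k)=2$, and in that signature-$(1,1)$ case its spectrum is exactly $\{2d(k),2d(k),-2d(k),-2d(k)\}$ with $d(k)>0$. Thus any positive exponent coming from a summand $W(k)$ with $k<N/2$ already occurs with multiplicity two inside $W(k)$, and summing these even contributions over the various $k$ keeps every such value at even total multiplicity. The only remaining source of a positive exponent is the self-conjugate summand $W(N/2)$, which under the hypothesis ($N$ even, all $a_i$ odd) has spectrum $\{1,-1\}$ and contributes the single value $1$.

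The crux is then to show $2d(k)<1$ for every $k\ne N/2$, so that $1$ exceeds all other positive exponents and occurs only once. I would establish the elementary bound $d(k)\le\tfrac12$, with equality forcing $k=N/2$. Since $t_i(k)+\big(1-t_i(k)\big)=1$, each entry $\min\big(t_i(k),1-t_i(k)\big)$ in the definition~\eqref{eq:dk:equals:min} of $d(k)$ is at most $\tfrac12$; hence $d(k)\le\tfrac12$ and $2d(k)\le1$, with equality precisely when $t_i(k)=\tfrac12$ for all four indices $i$. It remains to check that $t_i(k)=\tfrac12$ for $i=1,2,3,4$ forces $k=N/2$. The condition $\{a_ik/N\}=\tfrac12$ reads $2a_ik\equiv N\pmod{2N}$; a parity comparison shows $N$ must be even, say $N=2M$, and then $a_ik\equiv M\pmod N$ for every $i$. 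Using $\gcd(N,a_1,\dots,a_4)=1$ I would choose integers $c_0,\dots,c_4$ with $c_0N+\sum_i c_ia_i=1$ and multiply by $k$; since each $a_ik\equiv M\pmod N$, this gives $k\equiv\big(\sum_i c_i\big)M\pmod N$, so $k\equiv0$ or $M\pmod N$, whence $k=M=N/2$ as $1\le k\le N-1$. Finally $a_i(N/2)\equiv N/2\pmod N$ forces each $a_i$ to be odd, matching the hypothesis exactly.

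Combining the two steps concludes the argument: under the assumption that $N$ is even and all $a_i$ are odd, the exponent $1$ is produced only by $W(N/2)$, with multiplicity one, while every other strictly positive exponent equals some $2d(k)$ with $k<N/2$, hence is strictly less than $1$ and carries even multiplicity. In particular $\lambda_1=1>\lambda_2$, so $\lambda_1$ is simple. I expect the only genuine obstacle to be the number-theoretic step pinning down $k=N/2$ as the unique index with $d(k)=\tfrac12$; everything else is bookkeeping of the spectra already supplied by Theorem~\ref{th:main:theorem}.
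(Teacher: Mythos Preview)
Your argument is correct. The paper itself omits the proof entirely, stating only that it ``is absolutely elementary''; your write-up supplies precisely the expected details, reading the spectrum off Theorem~\ref{th:main:theorem} and then verifying the elementary fact that $d(k)=\tfrac12$ forces $k=N/2$ via the $\gcd$ condition on $(N,a_1,\dots,a_4)$.
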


%***************************************************************
%***************************************************************
%***************************************************************
\section{Hodge structure of a cyclic cover}
\label{s:Hodge:structure}

In   section~\ref{ss:basis}   we   construct  an  explicit  basis  of
holomorphic  forms  on  any given cyclic cover $\MNa$. All forms from
this  basis  are  eigenforms under the induced action of the group of
deck  transformations,  which gives the description of the dimensions
of  $V^{1,0}(k)$  and  $V^{0,1}(k)$.  An  analogous  calculation  was
already   performed  by  I.~Bouw  and  M.~M\"oller,  see~\cite{Bouw},
\cite{Bouw:Moeller},   and   by  C.~McMullen~\cite{McMullen},  so  we
present it here mostly for the sake of completeness.

In  section~\ref{ss:degree} we compute the degree of $V^{1,0}(k)$ for
those $k$ for which $V^{1,0}(k)$ is a line bundle.

%----------------------------------------------------------
\subsection{Basis of holomorphic forms}
\label{ss:basis}

Recall that $t(k)$ was introduced in equation~\eqref{eq:tk}, and that
$\{x\}$  and  $[x]$  denote  fractional  part and integer part of $x$
correspondingly.

\begin{Lemma}
\label{lm:basis}
Consider the meromorphic form
\begin{equation}
\label{eq:general:form}
\omega=(z-z_1)^{b_1}\dots(z-z_4)^{b_4}\ \cfrac{dz}{w^k}\
\end{equation}
on  a  cyclic  cover  $\MNa$.  Fix $k$ and let the integer parameters
$b_1,\dots,b_4$ vary.
\smallskip

$\bullet$  When  $t(k)=1$,  this  form  is  not  holomorphic  for any
parameters $b_i$.

$\bullet$     When     $t(k)=2$,     the     form    $\omega_k$    as
in~\eqref{eq:general:form} is holomorphic for
$
b_i(k)=\left[\cfrac{a_i}{N}\,k\right]
$
and is not holomorphic for other values of $b_i$.

$\bullet$   When  $t(k)=3$  there  is  a  two-dimensional  family  of
holomorphic forms spanned by the forms
$
\omega_{k,1}:=(z-z_1)^{b_1}\dots(z-z_4)^{b_4}\cfrac{dz}{w^k}\,,
$
and
$
\omega_{k,2}:=z(z-z_1)^{b_1}\dots(z-z_4)^{b_4}\cfrac{dz}{w^k}\,,
$
where  $b_i(k)=\left[\cfrac{a_i}{N}\,k\right]$.  Any holomorphic form
$\omega$ as in~\eqref{eq:general:form} belongs to this family.
\end{Lemma}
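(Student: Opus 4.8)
The plan is to test holomorphy of $\omega$ directly, by computing its order of vanishing at every point of the cyclic cover $\MNa$ and requiring all of these orders to be nonnegative. Away from the fibres over the branch points $z_1,\dots,z_4$ and over $z=\infty$ the function $w$ is nowhere zero and $z$ serves as a local coordinate, so $\omega$ is automatically holomorphic there; the only constraints on the exponents $b_i$ come from the ramified fibres over the $z_i$ and from the fibre over infinity. I would handle these two contributions separately and then combine them.

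For a point $P_i$ lying over $z_i$, set $g_i=\gcd(N,a_i)$, so that the fibre over $z_i$ has $g_i$ points, each of ramification index $e_i=N/g_i$. In a local uniformizer $s$ with $z-z_i=s^{e_i}$ one has $w=s^{a_i/g_i}\cdot(\text{unit})$, while the remaining factors $(z-z_j)^{b_j}$, $j\ne i$, are units. Adding the orders of $(z-z_i)^{b_i}$, of $dz$, and of $w^{-k}$, and substituting $e_i=N/g_i$ together with $ka_i=N(b_i(k)+t_i(k))$, where $b_i(k)=\left[\frac{a_i}{N}k\right]$, yields
$$\operatorname{ord}_{P_i}(\omega)=e_i\big(b_i-b_i(k)\big)+\big(e_i(1-t_i(k))-1\big).$$
The key point is that the integer $e_i(1-t_i(k))-1$ always lies in $\{0,1,\dots,e_i-1\}$ (both when $t_i(k)=0$ and when $t_i(k)>0$, using that $e_it_i(k)=Nt_i(k)/g_i$ is a positive integer less than $e_i$). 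Hence the order is nonnegative exactly when $b_i\ge b_i(k)$, and holomorphy over all four branch points is equivalent to the four inequalities $b_i\ge b_i(k)$.

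For the fibre over $z=\infty$ I would pass to the coordinate $u=1/z$. Using $\sum a_i=mN$ with $m\in\{1,2,3\}$, one finds $\prod(z-z_j)^{a_j}=u^{-mN}\cdot(\text{unit})$, so the cover is unramified over infinity and $w=u^{-m}\cdot(\text{unit})$. Collecting the orders of $\prod(z-z_j)^{b_j}=u^{-\sum b_j}\cdot(\text{unit})$, of $dz=-u^{-2}du$, and of $w^{-k}$ gives $\operatorname{ord}_\infty(\omega)=mk-2-\sum_j b_j$, so holomorphy at infinity is equivalent to the single inequality $\sum_j b_j\le mk-2$.

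Finally I would combine the constraints via the identity $\sum_i b_i(k)=\frac{k}{N}\sum_i a_i-t(k)=mk-t(k)$. Holomorphy forces $mk-t(k)=\sum_i b_i(k)\le\sum_i b_i\le mk-2$, i.e. $t(k)\ge 2$, which eliminates the case $t(k)=1$. When $t(k)=2$ the inequalities pinch to $\sum b_i=mk-2=\sum b_i(k)$ with $b_i\ge b_i(k)$, forcing $b_i=b_i(k)$ and leaving the single form $\omega_k$. When $t(k)=3$ one has $mk-3\le\sum b_i\le mk-2$ with $b_i\ge b_i(k)$, so either all $b_i=b_i(k)$, giving $\omega_{k,1}$, or exactly one $b_j=b_j(k)+1$; the latter equals $(z-z_j)\omega_{k,1}=\omega_{k,2}-z_j\omega_{k,1}$ and hence lies in the span of $\omega_{k,1}$ and $\omega_{k,2}$, establishing the two-dimensional family. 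The main obstacle is the local bookkeeping at the ramified fibres: one must pin down $(g_i,e_i)$ correctly and verify the window $e_i(1-t_i(k))-1\in\{0,\dots,e_i-1\}$, since this is precisely what collapses the order condition into the clean bound $b_i\ge b_i(k)$; the rest is elementary counting.
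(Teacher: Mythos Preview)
Your proof is correct and follows essentially the same route as the paper: compute the order of $\omega$ at each ramified fibre via a local uniformizer and at infinity, reduce holomorphy to the inequalities $b_i\ge b_i(k)$ together with $\sum_i b_i\le mk-2$, and then count integer solutions according to whether $t(k)$ equals $1$, $2$, or $3$. The only cosmetic difference is that the paper parametrizes the local calculation with $\ell=\operatorname{lcm}(N,a_i)$ rather than your pair $(g_i,e_i)$, but since $\ell/a_i=e_i$ and $\ell/N=a_i/g_i$ the two are identical.
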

\begin{proof}
To  prove  the Lemma we have to study the behavior of the meromorphic
form
$$
\omega=(z-z_1)^{b_1}\dots(z-z_4)^{b_4}\ \cfrac{dz}{w^k}\
$$
in  a  neighborhood  of  $z=z_1,\dots,z_4$  and  in a neighborhood of
$z=\infty$.

Let  $\ell=\lcm(N,a_1)$.  Consider a coordinate $u$ in a neighborhood
of a point $z_1$, such that
$$
(z-z_1)\sim u^\frac{\ell}{a_1}
$$
Then, in a neighborhood of $z_1$ we have
$$
w\sim (z-z_1)^\frac{a_1}{N}\sim u^\frac{\ell}{N}
$$
and
$$
\omega\sim u^{\frac{\ell}{a_1}b_1-\frac{\ell}{N}k+(\frac{\ell}{a_1}-1)}\,du\ .
$$
Thus,  the form $\omega$ is holomorphic in a neighborhood of $z_1$ if
and only if the integer
$$
\frac{\ell}{a_1}b_1-\frac{\ell}{N}k+\left(\frac{\ell}{a_1}-1\right)
$$
is nonnegative, which is equivalent to the inequality
$$
b_1+1\ge\frac{a_1}{N}k+\frac{a_1}{\ell}\ ,
$$
which is in turn equivalent to
$$
b_1+1>\frac{a_1}{N}k\ .
$$
Similarly, the conditions
$$
b_i+1>\frac{a_i}{N}k\ ,\quad\text{for } i=1,2,3,4
$$
are  necessary  and  sufficient  for  $\omega$  to  be holomorphic in
neighborhoods   of   $z_1,\dots,z_4$.   The   latter  conditions  are
equivalent to
\begin{equation}
\label{ineq:1}
b_i\ge\left[\frac{a_i}{N}k\right]\ ,\quad\text{for } i=1,2,3,4\ ,
\end{equation}
where $[x]$ denotes the integer part of $x$.

Consider  now  a  coordinate  $v$ in a neighborhood of $\infty$, such
that
$$
z\sim \frac{1}{v}
$$
Then, in a neighborhood of $\infty$ we have
$$
w\sim z^\frac{\sum a_i}{N}\sim v^{-\frac{\sum a_i}{N}}
$$
and
$$
\omega\sim v^{-\sum b_i+\sum a_i\frac{k}{N}-2}\,dv\ .
$$
Thus, the form $\omega$ is holomorphic at $\infty$ if and only if the
integer
$$
-\sum b_i+\sum a_i\frac{k}{N}-2
$$
is  nonnegative,  which  is  equivalent  to  the following inequality
\begin{equation}
\label{ineq:2}
2+\sum_{i=1}^4 b_i\le\sum_{i=1}^4 \frac{a_i}{N}k\ .
\end{equation}
Together with~\eqref{ineq:1} the latter inequality implies
$$
2+\sum_{i=1}^4\left[\frac{a_i}{N}k\right]\ \le\
\sum_{i=1}^4 \frac{a_i}{N}k
$$
or equivalently
$$
\sum_{i=1}^4\left\{\frac{a_i}{N}k\right\}\ \ge\ 2\ .
$$
Passing to the notations~\eqref{eq:tik}, \eqref{eq:tk} we can rewrite
the latter inequality as
\begin{equation}
\label{ineq:3}
t(k)\ \ge\ 2\ .
\end{equation}

Note  that  for  any  $k=1,\dots,N-1$ we have $t(k)\in\{1,2,3\}$. The
inequality~\eqref{ineq:3}  implies  that  for  those  $k$  for  which
$t(k)=1$,  there  are  no  integer  solutions  $b_1,\dots,b_4$ of the
system of inequalities~\eqref{ineq:1}--\eqref{ineq:2}.

For those $k$, for which $t(k)=2$, there is a single integer solution
$b_1,\dots,b_4$           of          the          system          of
inequalities~\eqref{ineq:1}--\eqref{ineq:2},                   namely
$b_i=\left[\frac{a_i}{N}k\right],\ i=1,2,3,4$.

Finally,  when  $t(k)=3$,  there  is  a  ``basic''  integer  solution
$b_i=\left[\frac{a_i}{N}k\right]$,  where  $i=1,2,3,4$, of the system
of   inequalities~\eqref{ineq:1}--\eqref{ineq:2},   and   four  other
solutions,  where  precisely  one  of the integers $b_i$ of the basic
solution  is augmented by one. The resulting holomorphic forms span a
two-dimensional    family    described    in    the    statement   of
Lemma~\ref{lm:basis}.
\end{proof}

\begin{Lemma}
\label{lm:g}
For  any  cyclic  cover  $\MNa$, the holomorphic forms constructed in
Lemma~\ref{lm:basis}  for  $k=1,\dots,N-1$, form a basis of the space
of holomorphic forms.
\end{Lemma}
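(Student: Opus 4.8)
The plan is to produce exactly $g$ forms via Lemma~\ref{lm:basis}, show they are linearly independent, and invoke $\dim_{\C{}}\Omega(X)=g$ to conclude they form a basis. The independence half is essentially immediate once the eigenform structure is noted. Every form $\omega=(z-z_1)^{b_1}\cdots(z-z_4)^{b_4}\,dz/w^k$ is an eigenvector of the deck action: since $T(z,w)=(z,\zeta w)$ fixes $z$ and sends $w^{-k}$ to $\zeta^{-k}w^{-k}$, one has $T^\ast\omega=\zeta^{-k}\omega$, so $\omega$ lies in the eigenspace for the eigenvalue $\zeta^{-k}=\zeta^{N-k}$. As $k$ runs over $1,\dots,N-1$ the eigenvalues $\zeta^{-k}$ are pairwise distinct, so forms attached to different $k$ are automatically independent. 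For a fixed $k$ with $t(k)=3$, the two forms $\omega_{k,1}$ and $\omega_{k,2}=z\,\omega_{k,1}$ differ by the nonconstant factor $z$ and are therefore independent as well. Hence the entire collection constructed in Lemma~\ref{lm:basis} is linearly independent.

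It remains to check that the number of these forms equals $g$. Lemma~\ref{lm:basis} yields $t(k)-1$ forms for each $k$ (namely $0,1,2$ according as $t(k)=1,2,3$), so the total is $\sum_{k=1}^{N-1}\bigl(t(k)-1\bigr)$, and I would compare this with the Riemann--Hurwitz value of $g$. The cover $p\colon\MNa\to\CP$ is branched only over $z_1,\dots,z_4$; over $z_i$ it has $\gcd(N,a_i)$ preimages, each of ramification index $N/\gcd(N,a_i)$, and connectedness is ensured by $\gcd(N,a_1,\dots,a_4)=1$. Thus
$$
2g-2=-2N+\sum_{i=1}^4\bigl(N-\gcd(N,a_i)\bigr),
\qquad\text{so}\qquad
g=N+1-\tfrac12\sum_{i=1}^4\gcd(N,a_i).
$$

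For the count I would interchange summation, $\sum_{k=1}^{N-1}t(k)=\sum_{i=1}^4\sum_{k=1}^{N-1}\{a_ik/N\}$, and evaluate the inner sum for each fixed $a=a_i$. Writing $d=\gcd(N,a)$, $N=dN'$, $a=da'$ with $\gcd(a',N')=1$, the values $a'k\bmod N'$ run through $d$ complete residue systems modulo $N'$ as $k$ runs over $0,\dots,N-1$ (the $k=0$ term being zero), so
$$
\sum_{k=1}^{N-1}\left\{\frac{a}{N}k\right\}
= d\sum_{j=0}^{N'-1}\frac{j}{N'}
= \frac{d(N'-1)}{2}
= \frac{N-\gcd(N,a)}{2}.
$$
Summing over $i$ gives $\sum_{k=1}^{N-1}t(k)=2N-\tfrac12\sum_i\gcd(N,a_i)$, whence $\sum_{k=1}^{N-1}\bigl(t(k)-1\bigr)=N+1-\tfrac12\sum_i\gcd(N,a_i)=g$. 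This matches the number of independent forms, so they span $\Omega(X)$ and form a basis.

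I expect the main obstacle to be the bookkeeping in these two computations rather than any conceptual difficulty: one must get the ramification data over each $z_i$ exactly right in Riemann--Hurwitz and apply the elementary identity $\sum_{k=1}^{N-1}\{ak/N\}=(N-\gcd(N,a))/2$ with the correct normalization, so that the cancellation producing $g$ is exact. The eigenspace argument for independence is routine by comparison.
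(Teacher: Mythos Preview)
Your proof is correct and follows essentially the same route as the paper: both reduce the statement to the counting identity $\sum_{k=1}^{N-1}(t(k)-1)=g$, compute $g=N+1-\tfrac12\sum_i\gcd(N,a_i)$, and verify it via the elementary fact $\sum_{k=1}^{N-1}\{ak/N\}=(N-\gcd(N,a))/2$. You are a bit more explicit than the paper in spelling out the eigenspace argument for linear independence and in deriving the genus formula from Riemann--Hurwitz rather than citing it, but the substance is the same.
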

\begin{proof}
The statement of the Lemma is equivalent to the following identity:
\begin{equation}
\label{eq:to:verify}
\sum_{k=1}^{N-1} \left(t(k)-1\right) = g\ .
\end{equation}
It  is easy to check (see, say,~\cite{Forni:Matheus:Zorich}) that the
genus $g$ of $\MNa$ is expressed in terms of parameters $N,a_i$ as
$$
g=N+1- \frac{1}{2} \sum\limits_{i=1}^4\textrm{gcd}(a_i,N)\ ,
$$
and we can rewrite relation~\eqref{eq:to:verify} as
$$
\sum_{i=1}^4\sum_{k=1}^{N-1} \left\{\frac{a_i}{N}k\right\}=(N-1)+
\big(N+1- \frac{1}{2} \sum\limits_{i=1}^4\textrm{gcd}(a_i,N)\big)\ .
$$
Thus,  to  prove~\eqref{eq:to:verify}  it is sufficient to prove that
\begin{equation}
\label{eq:to:verify:bis}
\sum_{k=1}^{N-1} \left\{\frac{a_i}{N}k\right\}=
\frac{1}{2}
\big(N-  \textrm{gcd}(a_i,N)\big)\ \text{ for }i=1,\dots,4\ .
\end{equation}

For any integers $a$, and $N> 0$, a sequence
$$
\left\{\frac{a}{N}\right\},\
\left\{2\frac{a}{N}\right\},
\dots
$$
is periodic with period $T=N/\gcd(N,a)$. A collection of numbers
$$
\left\{
\left\{\frac{a}{N}\right\},\
\left\{2\frac{a}{N}\right\},
\dots,
\left\{T\frac{a}{N}\right\}
\right\}
$$
within  each  period  considered  as \textit{unordered} set coincides
with the set
$$
\left\{
\frac{0}{T},\frac{1}{T},\frac{2}{T},\dots,\frac{T-1}{T}\right\}
$$
which  forms  an  arithmetic  progression. The sum of numbers in this
latter set equals $(T-1)/2$. Hence,
\begin{multline*}
\sum_{k=1}^{N-1} \left\{\frac{a}{N}k\right\}\ =\
\gcd(N,a)\sum_{k=1}^{T} \left\{\frac{a}{N}k\right\}
\ =\\=\
\frac{\gcd(N,a)}{2}\left(\frac{N}{\gcd(N,a)}-1\right)\ =\
\frac{1}{2}\left(N-  \gcd(a,N)\right)\ ,
\end{multline*}
which proves~\eqref{eq:to:verify:bis}.
\end{proof}

Now     let     us     see    how    relations~\eqref{eq:dim:duality}
and~\eqref{eq:dim:Vk}   of   the   Theorem  of  I.~Bouw  follow  from
Lemmas~\ref{lm:basis} and~\ref{lm:g}.

Equation~\eqref{eq:T}  describing  the  action  of  the group of deck
transformations    implies    that    all    forms   constructed   in
Lemma~\ref{lm:basis}       are      eigenforms,      namely,      the
form~\eqref{eq:general:form}  is  an  eigenform  with  the eigenvalue
$\zeta^{-k}=\zeta^{N-k}$.  By  Lemma~\ref{lm:g}  they form a basis of
the      space      of      holomorphic      forms.     Hence     the
forms~\eqref{eq:general:form} give a basis of $V^{1,0}(N-k)$ for each
individual  $k$.  Combined with Lemma~\ref{lm:basis} this observation
implies
$$
\dim_{\C{}} V^{1,0}(k)=t(N-k)-1\ .
$$

Clearly,        the        complex        conjugate       of       an
eigenform~\eqref{eq:general:form}  is  an  eigenform with a conjugate
eigenvalue.  Hence, the complex conjugates of~\eqref{eq:general:form}
form a basis in the space of antiholomorphic forms and
$$
\dim_{\C{}} V^{1,0}(k)=\dim_{\C{}} V^{0,1}(N-k)\ .
$$
This implies that
$$
\dim_{\C{}} V(k)=t(k)+t(N-k)-2\in\{0,1,2\}\, .
$$

%--------------------------------------------------------------------
\subsection{Extension  of the Hodge bundle and of its direct summands
to cusps: general approach}
\label{ss:extension:general}

Consider  a  holomorphic family $X_\epsilon$ of smooth complex curves
of  genus  greater  than  one  over  a  punctured  disc, $\epsilon\in
\cD\setminus\{0\}$.  By  the  Deligne--Mumford--Grothendieck  Theorem
this family extends to a holomorphic family of stable curves over the
entire  disc $\cD$ if and only if the induced monodromy on the bundle
$H^1_{\C{}}$ is unipotent.

Geometrically  such  an  extension  can  be described as follows. The
complex  curves  $X_\epsilon$  considered as Riemann surfaces endowed
with  the  hyperbolic  metric  get  pinched  at some fixed collection
$\Gamma=\{\gamma_1,\dots,\gamma_k\}$  of  short  hyperbolic geodesics
such  that for small $\epsilon$ the thick components $X_{\epsilon,j}$
of $X_\epsilon-\Gamma$ are ``geometrically close'' to the irreducible
components  $X_{0,j}$  of  the  stable  curve  $X_0$ endowed with the
canonical  hyperbolic  metric  with  cusps  at  the nodal points. The
limiting curve $X_0$ is a regular smooth complex curve if and only if
and  only if no hyperbolic geodesic gets pinched, so that $\Gamma$ is
empty in this particular case.

The  Deligne extension of the Hodge bundle has the following fiber at
$\epsilon=0$.  It  consists  of  holomorphic  $1$-forms on the smooth
locus  of  the  stable  curve $X_0$ which have simple poles at double
points  of  $X_0$ and such that the sum of two residues at any double
point vanishes.

Now  suppose  that  in addition we are given a nontrivial holomorphic
section  $\omega$  of  the  Hodge bundle $H^{1,0}$ over the punctured
disc       $\cD\setminus\{0\}$,       where       $\omega_\epsilon\in
H^{1,0}(X_\epsilon)$.   According   to   the  generalization  of  the
Deligne--Mumford--Grothendieck  Theorem  for any such section and for
each  irreducible  component  $X_{0,j}$  we  have  an  integer number
$d=d(j,\omega)$     such     that     the     holomorphic    $1$-form
$\epsilon^{-d}\omega_\epsilon$  restricted  to  the  thick  component
$X_{\epsilon,j}$  tends  to  some well-defined nontrivial meromorphic
form $\tilde\omega_{0,j}$ on the desingularized irreducible component
$X_{0,j}$.  The  limiting $1$-form $\tilde\omega_{0,j}$ is allowed to
have  poles  (possibly  of  order greater than one) only at the nodal
points  of $X_{0,j}$, see~\cite{Eskin:Kontsevich:Zorich} for details.
Note  that  in  general  the  integers  $d(j,\omega)$  vary  from one
irreducible  component  $X_{0,j}$  to the other. Define $d(\omega)\in
\Z$  to  be the minimum of integers $d(j,\omega)$ over all components
$X_{0,j}$.      The      limit      at     $\epsilon\to     0$     of
$\epsilon^{-d(\omega)}\omega_\epsilon$  is a form with at most simple
poles  at  double  points,  possibly  vanishing  at  some  components
$X_{0,j}$  of  the special fiber $X_0$, and non-vanishing on at least
one component $X_{0,k}$.

Let  us  assume that the line subbundle of $H^{1,0}$ generated by the
section  $\omega$  is the subbundle $\mathcal{E}^{1,0}$ corresponding
to  a direct summand $\mathcal{E}_{\C{}}$ of the variation of complex
polarized  Hodge structures $H_{\C{}}$, with signature $(1,1)$. Then,
outside  of  the  cusp,  $\omega$  gives  a  section of the canonical
extension of $\mathcal{E}^{1,0}$ to the cusp. The latter extension is
obviously  a  direct summand of the canonical extension of $H^{1,0}$.
Therefore,   the   order   of   zero   of  the  section  $\omega$  of
$\overline{\mathcal{E}^{1,0}}$    at   $\epsilon=0$   is   equal   to
$d(\omega)$.

Finally,  if  our family does not allow a semi-stable reduction (i.e.
the monodromy is not unipotent), then one can pass to a finite cyclic
cover of certain order $n\ge 1$ of the punctured disc, and reduce the
question to the unipotent case. Let us denote by $\tilde{\omega}$ the
pullback  of  the section $\omega$ to the covering. Then the orbifold
order  of  vanishing  at $\epsilon \to 0$ of $\omega$ is the order of
zero  $d(\tilde{\omega})$  corresponding  to the covering, divided by
$n$.

One  can  replace  this  formula by a ``local'' one. Namely, for each
component  $X_{0,j}$  of  a  special  fiber of a semistable reduction
obtained  by  passing  to  a  finite  covering,  we  associate a {\it
rational}  number  $r(j,\omega)=d(j,\tilde{\omega})/n$  equal  to the
fractional  order  of  the  zero in $\epsilon$, and then consider the
minimum over all components.

After  completing the paper~\cite{Eskin:Kontsevich:Zorich} we learned
that analogous results were simultaneously and independently obtained
in~\cite{Grushevsky:Krichever}     by     \mbox{S.~Grushevsky}    and
I.~Krichever   and   in~\cite{Calta:Schmidt:Smillie}   by   K.~Calta,
T.~Schmidt and J.~Smillie.

%--------------------------------------------------------------------
\subsection{Extension  of  the line bundles $V^{1,0}(k)$ to the cusps
of the Teichm\"uller curve}
\label{ss:extension:of:V}

Before passing to the proof of Theorem~\ref{th:degree} let us discuss
how  the general setting described in the previous section applies to
our concrete situation.

Recall  that  by Convention~\ref{conv:naming:ramification:points} all
the points $z_i$ are ``named'' and, thus, distinguishable. Hence, the
arithmetic  Teichm\"uller curve $\cC=\cM_{(a_i),N}$ can be identified
with the space $\cM_{0,4}$ of configurations of ordered quadruples of
distinct   points   $\{z_1,z_2,z_3,z_4\}$   on   the  Riemann  sphere
considered  up to a holomorphic automorphism preserving the ``names''
(see~\cite{Forni:Matheus:Zorich}  for  details).  We fix three points
$z_1,  z_2,  z_4$  equal  to  $0,1,\infty$  correspondingly. Now, our
cyclic     cover     is    defined    by    a    complex    parameter
$z_3\not\in\{0,1,\infty\}$  which  serves  as  a  coordinate  on  the
compactified                    Teichm\"uller                   curve
$\overline{\cC}\simeq\overline{\cM_{0,4}}\simeq\CP$.  This curve is a
base  of the universal curve. Under the chosen normalization, a fiber
$X_{z_3}$  of the universal curve over a point $z_3\in\cC$ is defined
by equation
\begin{equation}
\label{eq:curve:0:1:infty}
w^N=z^{a_1}(z-1)^{a_2}(z-z_3)^{a_3}\,.
\end{equation}
%}

\begin{figure}
\includegraphics{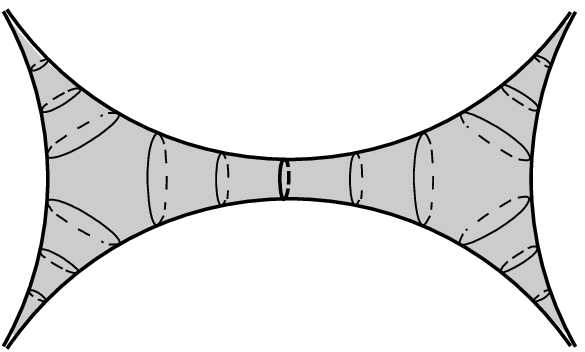}
\includegraphics{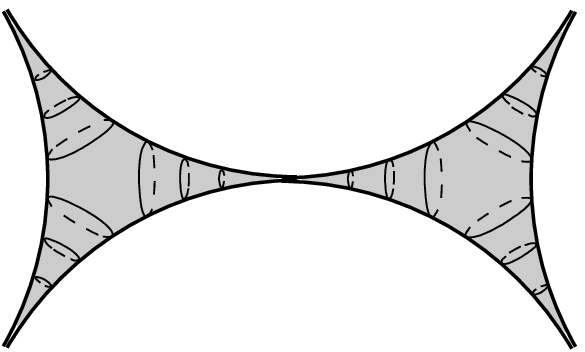}
\begin{picture}(0,0)(0,0)
\put(27,-15){$z_1$}
\put(27,-108){$z_2$}
\put(162,-108){$z_3$}
\put(162,-15){$z_4$}
\put(99,-115){b.}
\end{picture}
\begin{picture}(0,0)(194,0)
\put(27,-15){$z_1$}
\put(27,-108){$z_2$}
\put(162,-108){$z_3$}
\put(162,-15){$z_4$}
\put(98,-115){a.}
\end{picture}
\vspace{115bp}
\caption{
\label{fig:four:cusps:sphere}
Degeneration of $\CP$ with $4$ marked points.}
\end{figure}

We  warn the reader of a possible confusion: we have $\CP$ constantly
appearing  in  two different roles. On the one hand the Teichm\"uller
curve  $\cC$  is  isomorphic  to $\CP$ with three cusps. On the other
hand,  the  fiber $X_{z_3}$ of the universal curve over $\cC$ has the
structure of a cyclic cover, $X_{z_3}\simeq\MNa\to\CP\simeq Y_{z_3}$,
and  we have $\CP\simeq Y_{z_3}$ in the base of this cover. We mostly
work with $\CP$ in its second appearance: it can be viewed as a fiber
of the associated universal curve.

Our  goal  is  to calculate the orbifold degree of holomorphic vector
bundle  $V^{1,0}(k)$ on the curve $\cM_{(a_i),N}\simeq \cM_{0,4}$, in
the  case  when  it  is  a  {\it  line} bundle. Recall that we have a
canonical  section  of $V^{1,0}(k)$ given by one of basis elements of
$H^{1,0}$,  hence the degree coincides with the sum of multiplicities
of  zeroes  of this section. All zeroes and poles appear only at cusp
points.

A  point $z_3\in\cC$ of the Teichm\"uller curve approaches one of the
three  cusps  when  $z_3$  tends  to  $0, 1$ or $\infty$. The induced
monodromy  on  the bundle $H^1_{\C{}}$ is not unipotent, therefore we
should  go to an appropriate cover $\cC'\to \cC$ of the Teichm\"uller
curve  (e.g.  the  punctured  Fermat  curve, see the last sentence in
\ref{ss:theorem:sum}).   The   exact   nature  of  this  cover  is
irrelevant.

The  stable curve which arises as the limit as $z_3$ goes to the cusp
on $\overline{\cC}$ is a cover (possibly ramified at the double point
and  marked  points)  of  a semistable genus zero curve with 4 marked
points,  which  is  the limit of $Y_{z_3}$. The stable curves (with 4
marked  points) $Y_0$, $Y_1$ and $Y_\infty$ over the three cusps have
the  same structure, see Figure~\ref{fig:four:cusps:sphere}b. Each of
them has two components, where each component is a $\CP$ endowed with
three  marked  points;  the  two  components  are  glued  together by
identifying a pair of the marked points.

Geometrically  the  degeneration of the fiber $Y_{z_3}$ near the cusp
can  be  described  as follows. The fiber $Y_{z_3}\simeq\CP$ has four
marked points, namely, $0,1,\infty,z_3$. The corresponding hyperbolic
surface  is  a  topological sphere with four cusps (a cusp at each of
the  marked  points).  Such  hyperbolic surface can be glued from two
identical  pairs of pants, where each pair of pants has two cusps and
a  nontrivial  geodesic  boundary  curve;  the  boundaries  are glued
together        by       a       hyperbolic       isometry,       see
Figure~\ref{fig:four:cusps:sphere}a. When $z_3$ approaches one of the
points  $0,1,\infty$  the  close  hyperbolic geodesic, serving as the
common  waist  curve  of the pairs of pants, gets pinched, and at the
limit  we  get two identical pairs of pants, each having three cusps,
see~Figure~\ref{fig:four:cusps:sphere}b.

By  assumption  of  Theorem~\ref{th:degree}  we  consider  only those
values    of    $k$,    for    which    one    has   $t(N-k)=2$.   By
formula~\eqref{eq:dim:duality}  from  the  Theorem  of  I.~Bouw, this
implies  that  $\dim_{\C{}}V^{1,0}(k)=1$. By Lemma~\ref{lm:basis} the
fiber  $l_{z_3}$  of  the corresponding line bundle $l=V^{1,0}(k)$ is
spanned by the holomorphic form
\begin{equation}
\label{eq:form:0:1:infty}
\omega=z^{b_1}(z-1)^{b_2}(z-z_3)^{b_3}\cfrac{dz}{w^k}\ ,\qquad
           b_i=[a_i\cdot k/N],\,i=1,2,3\ .
\end{equation}

To  compute the orbifold degree of $l=V^{1,0}(k)$ it is sufficient to
compute  the  divisor  of  the section $\omega$. Since outside of the
cusps the section $\omega$ is nonzero, we have to compute the degrees
of  zeroes  or  poles of the extension of the section $\omega$ at the
three cusps $0,1,\infty$ of the Teichm\"uller curve $\cC$.

Following  the  general approach presented in the previous section we
introduce  a  small local parameter $\epsilon$ in a neighborhood of a
cusp  on  $\cC$. Let, for example, the cusp be presented by the point
$0$.  For  each component $X_{0,j}$ of the stable curve $X_0$ we need
to  find  an  appropriate  fractional  power  $r(j,\omega)\in  \Q$ of
$\epsilon$ such that the form $\epsilon^{-r(j,\omega)}\cdot\omega$ on
the  corresponding  thick part $X_{\epsilon,j}$ of $X_\epsilon$ tends
to a nontrivial meromorphic form on the chosen group of components of
the stable curve $X_{0,j}$ as $\epsilon\to 0$.

As  it will visible from the calculation we, actually, do not need to
consider  individually all irreducible components of the stable curve
$X_0$.   We   use   the  projection  $X_0\to  Y_0$  map  between  the
corresponding  stable curves to organize the components of $X_0$ into
two   groups   corresponding  to  preimages  of  the  two  components
$Y_{0,1},Y_{0,2}$     of     the     stable    curve    $Y_0$,    see
Figure~\ref{fig:four:cusps:sphere}b.   All   components   $X_{0,j_1},
\dots,    X_{0,j_m}$    in    the   same   group   corresponding   to
$Y_{0,\alpha},\ \alpha\in\{1,2\},$ share  the  same  rational  number
$r^{(\alpha)}:=r(j_1,\omega)=\dots=r(j_m,\omega)$.     The    minimum
$\min(r^{(1)},r^{(2)})$   of   the  resulting  two  rational  numbers
corresponding  to  the  two  components  $Y_{0,1},  Y_{0,2}$ of $Y_0$
determines  the  order  of  a zero or pole of the meromorphic section
$\omega$ of $V^{1,0}$ at the cusp $0$ in $\cC$. The situation for the
two   remaining  cusps  $1$  and  $\infty$  in  $\cC$  is  completely
analogous.

%------------------------------------------------------
\subsection{Computation of degrees of line bundles}
\label{ss:degree}

In  this  section  we  prove  Theorem~\ref{th:degree}.  The notations
$t_i(k)$  and  $t(k)$  which  we  use in the proof were introduced in
equations~\eqref{eq:tik} and~\eqref{eq:tk} correspondingly.

\begin{proof}[Proof of Theorem~\ref{th:degree}]
Let,     $z_3=\epsilon\to    0$.    The    base    of    the    cover
$X_{z_3}\simeq\MNa\to\CP$  splits into two Riemann spheres, where $0$
and  $z_3$  stay in one component, and $1$ and $\infty$ belong to the
other one,
see  Figure~\ref{fig:four:cusps:sphere}b.

Introducing coordinates
$\tilde z=\epsilon^{-1} z$ and
$\tilde w=\epsilon^{-\frac{a_1+a_3}{N}} w
$
in the first component we see that
$$
\omega=\epsilon^{(b_1+b_3+1-\frac{a_1+a_3}{N}k)}\,
\tilde\omega_\epsilon\ ,
$$
where   a   holomorphic   form  $\tilde\omega_\epsilon$  tends  to  a
nontrivial  meromorphic  form  $\tilde\omega_0$ when $z_3=\epsilon\to
0$. Here and below ``trivial'' means everywhere null.

\begin{NNRemark}
Formally,  we  should introduce $n:=N/\gcd(a_1+a_3, N)$ and pass to a
ramified  $n$-fold  cover  $\hat\cD\to\cD$ of a neighborhood $\cD$ of
the  cusp  $0$  at  $\cC$,  so that $\epsilon=\delta^n$. Then $\tilde
z=\delta^{-n}z$   and  $\tilde  w=\delta^{-\frac{(a_1+a_3)n}{N}}  w$,
where  the  powers  of $\delta$ are already integer; see the comments
about    the   orbifold   degree   in   sections~\ref{ss:theorem:sum}
and~\ref{ss:extension:general}.
\end{NNRemark}

On the other component $\omega$ tends to a nontrivial form when $z_3$
tends  to  $0$  without  any  renormalization. Hence, our section has
singularity of order
$\min\left(0,b_1+b_3+1-\cfrac{a_1+a_3}{N}\,k\right)\ $
at this cusp. We proved in Lemma~\ref{lm:basis} that when $t(N-k)=2$,
we  have  $b_i=\left[\frac{a_i}{N}k\right]$. Thus, we can rewrite the
above expression as
\begin{equation}
\label{eq:deg:0}
\min\left(0\,,\,b_1+b_3+1-\cfrac{a_1+a_3}{N}\,k\right)=
\min\big(0\,,\,1-(t_1(k)+t_3(k))\big)\ .
\end{equation}

Similarly,  the order of the singularity of the section $\omega$ at
the cusp $z_3=1$ is equal to
\begin{equation}
\label{eq:deg:1}
\min\big(0\,,\,1-(t_2(k)+t_3(k))\big)\ .
\end{equation}

Finally,  when $z_3\to\infty$ the base of the cover $\MNa\to\CP$ once
again  splits into two Riemann spheres, where $0$ and $1$ stay in one
component,  and $z_3$ and $\infty$ belong to the other component. Let
$z_3=\frac{1}{\epsilon}$,  and  consider  the behavior of $\omega$ on
the  first  component, containing $0$ and $1$, as $\epsilon$ tends to
$0$.

The  equation~\eqref{eq:curve:0:1:infty}  of  the  curve implies that
$w\sim\epsilon^{-\frac{a_3}{N}}$.                                 The
equation~\eqref{eq:form:0:1:infty} of the form implies that
$$
\omega=\epsilon^{(-b_3+\frac{a_3}{N}k)}\,
\phi_\epsilon\ ,
$$
where  the  meromorphic  form  $\phi_\epsilon$  tends to a nontrivial
meromorphic    form    $\phi_0$   on   the   first   component   when
$z^{-1}_3=\epsilon\to 0$.

Introduce  the  coordinate  $\tilde  z=\frac{1}{\epsilon  z}$  on the
second  component.  The  equation~\eqref{eq:curve:0:1:infty}  of  the
curve   implies  that  $w\sim\epsilon^{-\frac{a_1+a_2+a_3}{N}}$.  The
equation~\eqref{eq:form:0:1:infty} of the form implies that
$$
\omega=\epsilon^{(-b_1-b_2-b_3-1+\frac{a_1+a_2+a_3}{N}k)}\,
\psi_\epsilon\ ,
$$
where  the  meromorphic  form  $\psi_\epsilon$  tends to a nontrivial
meromorphic    form   $\psi_0$   on   the   second   component   when
$z^{-1}_3=\epsilon\to 0$.

By  Lemma~\ref{lm:basis}  we  have $b_i=\left[\frac{a_i}{N}k\right]$.
Applying           the          notations~\eqref{eq:tik}          for
$t_i(k)=\left\{\frac{a_i}{N}k\right\}$  we  conclude that the section
$\omega$ of the line bundle $V^{1,0}(k)$ has singularity of order
$$
\min\big(t_3(k)\,,\,t_1(k)+t_2(k)+t_3(k)-1\big)
$$
at $\infty$.

Taking  into consideration that $t(k)=t_1(k)+t_2(k)+t_3(k)+t_4(k)=2$,
we can rewrite the latter expression as
\begin{multline}
%\notag
\min\big(t_3(k)\,,\,t_1(k)+t_2(k)+t_3(k)-1\big)
\ =\\=
t_3(k)+\min\big(0,t_1(k)+t_2(k)-1\big)
=
\label{eq:deg:infty}
t_3(k)+
\min\big(0,1-(t_3(k)+t_4(k))\big)\,.
\end{multline}

Summing             up~\eqref{eq:deg:0},             \eqref{eq:deg:1}
and~\eqref{eq:deg:infty}  we  see  that  the  orbifold  degree of the
section $\omega$ is equal to
\begin{multline}
\label{eq:deg}
t_3(k)\ +\
\min\big(0\,,\,1-(t_1(k)+t_3(k))\big)
\ +\\+\
\min\big(0\,,\,1-(t_2(k)+t_3(k))\big)\ +\
\min\big(0\,,\,1-(t_4(k)+t_3(k))\big)
\end{multline}

Formula~\eqref{eq:dk:equals:min}  for the orbifold degree of the line
bundle  $V^{1,0}(k)$  stated  in  Theorem~\ref{th:degree} now follows
from  the relation~\eqref{eq:identity} proved in the elementary Lemma
below.

\begin{Lemma}
For  any  quadruple of numbers $t_i\in[0,1[$, satisfying the relation
$t_1+t_2+t_3+t_4=2$, the following identity holds:
\begin{multline}
\label{eq:identity}
\min\big(t_1,1-t_1,\dots,t_4,1-t_4\big)
\ =\\=
t_3 +
\min\big(0,1-(t_1+t_3)\big)+
\min\big(0,1-(t_2+t_3)\big)+
\min\big(0,1-(t_4+t_3)\big)
\end{multline}
\end{Lemma}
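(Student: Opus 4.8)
The plan is to normalize the distinguished variable $t_3$ by means of the reflection $t_i\mapsto 1-t_i$, and then to run a short case analysis governed by how many of $t_1,t_2,t_4$ exceed $1-t_3$. The conceptual point is that the right-hand side of \eqref{eq:identity} singles out $t_3$ whereas the left-hand side is symmetric; the constraint $t_1+t_2+t_3+t_4=2$ is exactly what reconciles the two.

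First I would check that \emph{both} sides of \eqref{eq:identity} are invariant under the substitution $t_i\mapsto 1-t_i$. The left-hand side is obviously invariant, since this substitution merely permutes the arguments of the minimum in pairs. For the right-hand side one computes, writing $x^+:=\max(0,x)$ and using $x^+-(-x)^+=x$ together with $\sum_i t_i=2$, that the difference between the expression and its reflection collapses to $0$; this is a one-line computation. Since the substitution also preserves the hypotheses (up to a harmless boundary value where some $t_i$ becomes $1$), I may henceforth assume $t_3\le\tfrac12$, so that $\min(t_3,1-t_3)=t_3$.

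Next, set $P:=\{\,i\in\{1,2,4\}:t_i+t_3>1\,\}$, the set of indices contributing a nonzero term $\min(0,1-(t_i+t_3))$ on the right. Each $i\in P$ satisfies $t_i>1-t_3\ge\tfrac12$, hence $\min(t_i,1-t_i)=1-t_i$ for such $i$. Summing the inequalities $t_i>1-t_3$ over $i\in P$ and comparing with $t_1+t_2+t_4=2-t_3$ forces $|P|\le 2$ once $t_3\le\tfrac12$. In the case $|P|=0$ the right-hand side is $t_3$, and the constraint forces $t_3\le t_i\le 1-t_3$ for every $i$, so $\min(t_i,1-t_i)\ge t_3$ and the minimum equals $t_3$. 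In the case $|P|=1$, say $P=\{i_0\}$, the right-hand side simplifies to $1-t_{i_0}$; from $t_j\le 1-t_3<t_{i_0}$ for $j\neq i_0$, together with $t_{i_0}+t_j\ge 1$ (read off from the bound $t_\ell\le 1-t_3$ on the remaining index $\ell\neq i_0,j$), one checks that $1-t_{i_0}$ is a lower bound for every other argument of the minimum. In the case $|P|=2$, say $P=\{i_0,i_1\}$, the right-hand side simplifies via $\sum t_i=2$ to $t_{i_2}$ (the remaining index), and the strict inequalities $t_{i_0},t_{i_1}>1-t_3$ force $t_{i_2}<t_3\le\tfrac12$ and $t_{i_0}+t_{i_2},\,t_{i_1}+t_{i_2}\le 1$, again pinning the minimum to $t_{i_2}$. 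In all three cases the two sides agree.

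The main obstacle is not any single deep step but the bookkeeping in the cases $|P|=1$ and $|P|=2$: there one must verify that the minimum on the left is genuinely attained at the predicted index, i.e.\ that \emph{every} other term $\min(t_i,1-t_i)$ is at least the claimed value. Each such inequality fails to hold variable-by-variable and becomes true only after invoking $\sum t_i=2$ (for instance $t_1+t_2\ge 1$ follows from $t_4\le 1-t_3$). Organizing these derived inequalities cleanly, and disposing of the boundary values where some $t_i$ vanishes, is where the care is needed; the reflection step at the outset is what keeps the number of cases small.
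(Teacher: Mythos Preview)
Your argument is correct and complete; the reflection step, the bound $|P|\le 2$, and the three cases all check out as written.

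The route, however, differs from the paper's. The paper does not use the reflection $t_i\mapsto 1-t_i$. Instead it first argues that the right-hand side is actually \emph{symmetric} under all permutations of the indices $1,2,3,4$ --- and it does this indirectly, by noting that the right-hand side coincides on a dense set of parameters with a geometrically defined quantity (the orbifold degree) that is manifestly symmetric, then invoking continuity. Having symmetry in hand, the paper orders $t_1\le t_2\le t_3\le t_4$ so that the left-hand side becomes $\min(t_1,1-t_4)$, after which two of the three $\min$ terms on the right are determined immediately and only a single dichotomy ($t_1\le 1-t_4$ or not) remains. Your approach trades the symmetry argument for a direct reflection and a three-way case split on $|P|$. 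What you gain is a proof that is entirely elementary and self-contained, with no appeal to the geometric origin of the expression; what the paper gains is a shorter endgame (two cases rather than three) once symmetry is established.
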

\begin{proof}
First note, that expression~\eqref{eq:deg}, is symmetric with respect
to  permutations  of  indices  $1,2,3,4$,  just  because  the initial
geometric setting is symmetric. Since the corresponding quadruples of
numbers  $t_i(k)$, taken for different data $N,a_1,\dots,a_4$, form a
dense  set  in $[0,1]^4$, we conclude that the continuous function in
the  right-hand side of~\eqref{eq:identity} is symmetric with respect
to   permutations  of  indices  $1,2,3,4$.  Hence,  without  loss  of
generality  we may assume that $t_1\le t_2\le t_3\le t_4$. Under this
assumption the left-hand side expression in~\eqref{eq:identity} takes
the value $\min(t_1,1-t_4)$.

Consider the expression in the right-hand side. Note that $t_2+t_4\ge
t_1+t_3$.  Since $t_2+t_4 + t_1+t_3=2$, this implies that $t_1+t_3\le
1$, and hence
$$
\min\big(0\,,\,1-(t_1+t_3)\big)=0\ .
$$
Similarly, since $t_3+t_4\ge t_1+t_2$ we conclude that $t_3+t_4\ge 1$
and hence
$$
\min\big(0\,,\,1-(t_4+t_3)\big)=1-(t_4+t_3)\ .
$$

The  value  of  the  middle  term depends on comparison of $t_1$ with
$1-t_4$. If $t_1\le 1-t_4$, then $t_1+t_4\le 1$ and hence $t_2+t_3\ge
1$.  In  this  case  we  get  the  value $t_1$ for the left-hand side
expression in~\eqref{eq:identity} and
$$
t_3+0+(1-(t_2+t_3))+(1-(t_4+t_3))=2-((t_2+t_3+t_4)=t_1
$$
as the value of the right-hand side expression.

If  $t_1>  1-t_4$,  then $t_1+t_4> 1$ and hence $t_2+t_3< 1$. In this
case  we  get  the  value  $1-t_4$  for the left-hand side expression
in~\eqref{eq:identity} and
$$
t_3+0+0+(1-(t_4+t_3))=1-t_4
$$
as  the value of the right-hand side expression. The desired identity
is proved.
\end{proof}

 To complete the proof of Theorem~\ref{th:degree} it
remains  to  prove  the  alternative  concerning positivity of $d(k)$
claimed in the statement of the Theorem.

By  assumptions  of Theorem~\ref{th:degree} we have
$t(N-k)=2$.  By  formula~\eqref{eq:dim:duality}  from  the Theorem of
I.~Bouw  one  gets  $\dim_{\C{}} V^{1,0}(k) = 1$. Hence, $\dim_{\C{}}
V(k)\ge  1$.  Thus,  formula~\eqref{eq:dim:Vk}  from  the  Theorem of
I.~Bouw implies that $t(k)$ is equal either to $2$ or to $1$.

If  $t(k)=2$, then by formula~\eqref{eq:dim:Vk} one gets $\dim_{\C{}}
V(k)  =  2$.  By  Lemma~\ref{lm:dim:Vk:equals:two}  this implies that
$t_i(k)>0$  for  $i=1,2,3,4$.  Since  by definition~\eqref{eq:tik} of
$t_i(k)$  one always has $t_i(k)<1$, formula~\eqref{eq:dk:equals:min}
implies that $d(k)>0$ in this case.

If  $t(k)=1$, then by formula~\eqref{eq:dim:Vk} one gets $\dim_{\C{}}
V(k)  =  1$.  By  Lemma~\ref{lm:dim:Vk:equals:two}  this implies that
there  is  an index $i\in\{1,2,3,4\}$, such that $t_i(k)=0$. Thus, in
this case formula~\eqref{eq:dk:equals:min} implies that $d(k)=0$.

Theorem~\ref{th:degree} is proved.
\end{proof}

%*****************************************************************
%*****************************************************************
%*****************************************************************

% ###############################################################
% ###############################################################
% ###############################################################
\appendix
\section{Hodge bundles associated to quadratic differentials}
\label{a:quadratic}

Consider  a  cyclic cover $p:\MNa\to \CP$ and a meromorphic quadratic
differential  $p^\ast  q_0$  on $\MNa$ defining the flat structure on
$\MNa$.  Here  a  canonical  quadratic differential $q_0$ on $\CP$ is
defined  by  equation~\eqref{eq:q:on:CP1}. The quadratic differential
$q$  is a global square of an Abelian differential if and only if $N$
is even and all $a_i$ are odd, see~\cite{Forni:Matheus:Zorich}.

Suppose  that  at least one of the following conditions is valid: $N$
is odd or one of $a_i$ is even. Then $q$ is not a global square of an
Abelian  differential.  There  exists a canonical (possibly ramified)
double cover $p_2:\hat X\to \MNa$ such that $p_2^\ast q=\omega^2$,
where $\omega$ is already a holomorphic $1$-form on $\hat X$.

Let $\hat g$ be the genus of the cover $\hat X$. By \textit{effective
genus} we call the positive integer
\begin{equation}
\label{eq:g:eff}
g_{\mathit{eff}}:=\hat g - g\ .
\end{equation}

The  cohomology  space  $H^1(\hat  X,\R{})$  splits into a direct sum
$H^1(\hat  X,\R{})=H_+^1(\hat  X,\R{})\oplus  H_-^1(\hat  X,\R{})$ of
invariant  and  antiinvariant  subspaces  with respect to the induced
action  of  the  involution  $p_2^\ast : H^1(\hat X,\R{})\to H^1(\hat
X,\R{})$.  Note  that the invariant part is canonically isomorphic to
the  cohomology  of the underlying surface, $H_+^1(\hat X,\R{})\simeq
H^1(X,\R{})$.   We   consider   subspaces  $H_+^1(\hat  X,\R{})$  and
$H_-^1(\hat  X,\R{})$ as fibers of natural vector bundles $H^1_+$ and
$H^1_-$  over  the  Teichm\"uller  curve $\cC$. The bundle $H^1_+$ is
canonically isomorphic to the Hodge bundle $H^1$ considered above.

The  splitting  $H^1=H^1_+\oplus  H^1_-$ is covariantly constant with
respect   to   the   Gauss--Manin  connection.  The  symplectic  form
restricted  to  each summand is nondegenerate. Thus, the monodromy of
the  Gauss--Manin  connection  on $H_-^1$ is symplectic. The Lyapunov
exponents  of the bundle $H_-^1$ with respect to the geodesic flow on
$\cC$                  are                 denoted                 by
$\lambda^-_1\ge\dots\ge\lambda^-_{2g_{\mathit{eff}}}$.  As  before we
have  $\lambda^-_k=-\lambda^-_{2g_{\mathit{eff}}-k+1}$. It is natural
to  study  the  Lyapunov spectrum $\operatorname{{\Lambda}Spec}_-$ of
$H^1_-$.

When $N$ is odd, the canonical double cover $\tilde X$ over $\MNa$ is
again   a   cyclic   cover.   Namely,   it   is   the   cyclic  cover
$M_{2N}(a'_1,\dots,a'_4)$  where  $a'_i:=a_i$,  when $a_i$ is odd and
$a'_i:=a_i+N$,     when    $a_i$    is    even.    We    can    apply
Theorem~\ref{th:main:theorem} to compute the Lyapunov spectrum of the
Hodge  bundle  over  $M_{2N}(a'_1,\dots,a'_4)$. By construction, this
Lyapunov  spectrum  is  a union $\operatorname{{\Lambda}Spec}_-\sqcup
\operatorname{{\Lambda}Spec}_+$  of  Lyapunov  spectra of the bundles
$H^-$        and        $H^+$        correspondingly.        Applying
Theorem~\ref{th:main:theorem}  once  more  we  compute  the  Lyapunov
spectrum   $\operatorname{{\Lambda}Spec}_+$   of   the  Hodge  bundle
corresponding  to  $\MNa$.  Taking  the complement of two spectra, we
obtain the Lyapunov spectrum $\operatorname{{\Lambda}Spec}_-$.

When  $N$  is even, and a pair of $a_i$ is even, the canonical double
cover  $\tilde  X$ is not a cyclic cover. However, in this case it is
an \textit{Abelian} cover, see~\cite{Wright}.

%---------------------------------------------------------------------
\section{Square-tiled cyclic covers with symmetries}
\label{s:symmetries}

Following   I.~Bouw   and   M.~M\"oller,  see~\cite{Bouw:Moeller}  we
consider in this section a situation when a cyclic cover has an extra
symmetry.  Passing  to  a quotient over such an invariant holomorphic
automorphism  we  get  a  new  square-tiled surface and corresponding
arithmetic  Teichm\"uller  curve  for which we can explicitly compute
the Lyapunov spectrum.

%---------------------------------------------------------------------

The  fact  that the Lyapunov spectrum corresponding to the ``stairs''
square-tiled surfaces discussed below forms an arithmetic progression
(see  Proposition~\ref{pr:palindrome}  in  the  current  section) was
noticed  by  the  authors in computer experiments about a decade ago.
Recently  M.~M\"oller suggested that this property of the spectrum is
a strong indication that the corresponding surface is a quotient of a
cyclic  cover  over  an  automorphism  (by  analogy with examples of
non-arithmetic   Veech  surfaces  discovered  in~\cite{Bouw:Moeller},
having the same property of the spectrum).

The    discussion   with   J.-C.~Yoccoz   on   possible   values   of
\mbox{Lyapunov}   exponents   of   $\SL$-cocycles   ``over  continued
fractions''  was  a  strong  motivation  for  us  to prove a complete
integrability  in  this  example.  We  show  that  the  Hodge  bundle
$H^1_{\R{}}$   decomposes   into  a  direct  sum  of  two-dimensional
covariantly  constant  subbundles,  and  that  such subbundles over a
small  arithmetic  Teichm\"uller  curve  might  have  arbitrary small
Lyapunov exponents.

%---------------------------------------------------------------------

Consider  a  cyclic  cover  of  the  type $M_N(a,N-a,b,N-b)$. In this
section we
always  assume  that  $N$  is  even and all $a_i$ are odd, so $p^\ast
q_0=\omega^2$. By convention~\ref{conv:naming:ramification:points}
the  four ramification points $P_1,P_2,P_3,P_4$ of the Riemann sphere
$\CP$  are  named,  so  even  when $a=b$ we can distinguish preimages
corresponding   to   $P_1$  and  to  $P_3$.  There  exists  a  unique
holomorphic involution of $\CP$ with interchanging the points in each
of  the  two  pairs  $P_1,P_2$ and $P_3,P_4$ of the marked points. By
construction  it preserves the quadratic differential determining the
flat  structure  on  $\CP$.  This  involution  induces  a holomorphic
involution $\tau$ of the square-tiled cyclic cover $M_N(a,N-a,b,N-b)$
preserving the flat structure in the sense that
$$
\tau^\ast\omega^2=\omega^2\ .
$$
By construction the involution $\tau$ also preserves the structure of
the  cover,  moreover,  it interwinds a generator $T$ of the group of
deck transformations with its inverse:
\begin{equation}
\label{eq:involution}
\tau T=T^{-1}\tau  \ .
\end{equation}
The  latter relation implies that the holomorphic automorphism of the
cyclic  cover  defined  as $\tau_2:=\tau\circ T$ is also a holomorphic
involution,  and  that  it  also  preserves the flat structure in the
sense that
$$
\tau_2^\ast\omega^2=\omega^2\ .
$$
Note that the latter relation implies that
$$
\tau_2^\ast\omega=\pm\omega\ .
$$
Note also that
$$
T^\ast\omega=-\omega\,.
$$
Hence, if one of the two involutions does not preserve $\omega$, then
the  other  does.  Thus,  up  to the interchange of notations for the
involutions, we may assume that
$$
\tau^\ast\omega=\omega\ .
$$

\begin{Proposition}
\label{pr:symmetry}
Let  $N$  be  even  and  let  $a$  and  $b$  be  odd. A quotient of a
square-tiled  cyclic  cover  of  the form $M_N(a,N-a,b,N-b)$ over the
involution  $\tau$  as above is a connected square-tiled surface. The
Lyapunov  spectrum $\operatorname{{\Lambda}Spec}$ of the Hodge bundle
over  an  arithmetic  Teichm\"uller curve of the initial square-tiled
cyclic  cover  can  be  obtained by taking two copies of the Lyapunov
spectrum  $\operatorname{{\Lambda}Spec_+}$  of  the Hodge bundle over
the  arithmetic  Teichm\"uller  curve  of  the  quotient square-tiled
surface and suppressing one of the two entries ``$1$''.
\end{Proposition}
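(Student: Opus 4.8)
The plan is to treat the two assertions of the Proposition separately: the geometric claim that $Y:=X/\tau$ (with $X:=M_N(a,N-a,b,N-b)$) is a connected square-tiled surface, and the spectral claim relating the two Lyapunov spectra. The geometric part is essentially immediate. Connectedness of $Y$ follows at once from connectedness of $X$. Since $\tau^\ast\omega=\omega$, the Abelian differential descends to a holomorphic $1$-form $\omega_Y$ on $Y$ with $\omega=\pi^\ast\omega_Y$, where $\pi\colon X\to Y$ is the projection: in a local coordinate $x$ at a fixed point of $\tau$ one has $\tau\colon x\mapsto -x$, so a $\tau$-invariant form $f(x)\,dx$ has $f$ odd and is therefore holomorphic in the quotient coordinate $x^2$. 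As $\tau$ is induced by an isometry of the square pillow, $\omega_Y$ is again square-tiled. The projection identifies the $\tau$-invariant cohomology with $H^1(Y;\R{})$, so $H^1_+\simeq H^1(Y;\R{})$, and the covariantly constant splitting $H^1_{\R{}}=H^1_+\oplus H^1_-$ yields $\operatorname{{\Lambda}Spec}=\operatorname{{\Lambda}Spec}_+\sqcup\operatorname{{\Lambda}Spec}_-$, where $\operatorname{{\Lambda}Spec}_+$ is exactly the Lyapunov spectrum of the Hodge bundle over the Teichm\"uller curve of $Y$. It thus suffices to compare $\operatorname{{\Lambda}Spec}_+$ and $\operatorname{{\Lambda}Spec}_-$.

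First I would exploit the commutation relation $\tau T=T^{-1}\tau$ at the level of cohomology. Passing to pullbacks it reads $T^\ast\tau^\ast=\tau^\ast (T^\ast)^{-1}$, so $\tau^\ast$ sends the $\zeta^k$-eigenspace to the $\zeta^{-k}=\zeta^{N-k}$-eigenspace; that is, $\tau^\ast\colon V(k)\to V(N-k)$ is an isomorphism. Because the mapping class of $\tau$ is locally constant as the cross-ratio varies, $\tau^\ast$ is a flat automorphism of the Hodge bundle, so this is a flat isomorphism of the local systems $V(k)$ and $V(N-k)$. For $k\neq N/2$ the involution $\tau^\ast$ then acts on $W_{\C{}}(k)=V(k)\oplus V(N-k)$ by $(u,v)\mapsto(\tau^\ast v,\tau^\ast u)$, and its $(\pm1)$-eigenbundles are the graphs $\{(u,\pm\tau^\ast u):u\in V(k)\}$. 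Each graph is a flat subbundle, flat-isomorphic to $V(k)$ through $u\mapsto(u,\pm\tau^\ast u)$; hence both the invariant and the anti-invariant parts of $W(k)$ have Lyapunov spectrum equal to that of $V(k)$, and the contributions of every $k\neq N/2$ to $\operatorname{{\Lambda}Spec}_+$ and to $\operatorname{{\Lambda}Spec}_-$ coincide.

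It remains to isolate the component $k=N/2$, which is where the two spectra part company. Since $N$ is even and all $a_i$ are odd, $t_i(N/2)=\{a_i/2\}=1/2$ for each $i$, so $V(N/2)$ has signature $(1,1)$ and $d(N/2)=1/2$; by Theorem~\ref{th:main:theorem} its spectrum is $\{1,-1\}$, and this single ``$1$'' is the tautological exponent carried by $\omega$. Now $V^{1,0}(N/2)$ is spanned by $\omega$ (indeed $T^\ast\omega=-\omega=\zeta^{N/2}\omega$) and $V^{0,1}(N/2)$ by $\overline\omega$. Since $\tau^\ast\omega=\omega$ and $\tau^\ast$ commutes with complex conjugation, both generators are $\tau$-invariant, so $V(N/2)\subset H^1_+$ and its anti-invariant part vanishes. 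Thus the whole contribution $\{1,-1\}$ of $k=N/2$ lies in $\operatorname{{\Lambda}Spec}_+$ and none of it in $\operatorname{{\Lambda}Spec}_-$. Combined with the previous paragraph, $\operatorname{{\Lambda}Spec}_-$ equals $\operatorname{{\Lambda}Spec}_+$ with exactly one entry ``$1$'' (and its mirror ``$-1$'') deleted; hence $\operatorname{{\Lambda}Spec}=\operatorname{{\Lambda}Spec}_+\sqcup\operatorname{{\Lambda}Spec}_-$ is two copies of $\operatorname{{\Lambda}Spec}_+$ with one entry ``$1$'' suppressed, as claimed. Note that this conclusion is insensitive to whether other exponents happen to equal $1$: the graphs place any such value symmetrically in both copies, and only the tautological $V(N/2)$-exponent is asymmetric.

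The main obstacle is the flatness assertion for $\tau^\ast$ underlying both comparisons: one must verify that the involution of $\CP$ interchanging the two prescribed pairs of named points varies holomorphically with the cross-ratio, lifts to $X$ compatibly with the deck action, and induces an $\SL$-equivariant $(\pm1)$-splitting of the Hodge bundle over all of $\cC$, so that $\operatorname{{\Lambda}Spec}_+$ is genuinely the spectrum of the quotient's Hodge bundle and not merely an abstract sub-spectrum. Once flatness is granted, the identification of the $(\pm1)$-eigenbundles with graphs of $\tau^\ast$ and the localization of the tautological exponent in $V(N/2)$ are forced, and the only arithmetic input, $t_i(N/2)=1/2$, is immediate from the parities of $N$ and the $a_i$.
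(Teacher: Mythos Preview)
Your proof is correct and follows essentially the same architecture as the paper's: both use $\tau T=T^{-1}\tau$ to deduce $\tau^\ast V(k)=V(N-k)$, split each $W(k)$ into $(\pm1)$-eigenbundles of $\tau^\ast$, treat $k=N/2$ separately by noting $\tau^\ast$ fixes $\omega$ and $\bar\omega$, and identify $H^1_+$ with the cohomology of the quotient. The one substantive variation is in showing that $W_+(k)$ and $W_-(k)$ have equal spectra for $k<N/2$: the paper invokes the explicit form $\{\lambda,\lambda,-\lambda,-\lambda\}$ of the spectrum of $W(k)$ from Theorem~\ref{th:main:theorem} together with a half-dimension count, whereas your graph argument $(u,\pm\tau^\ast u)\mapsto u$ gives a direct flat isomorphism of each eigenbundle with $V(k)$ without appealing to the computed spectrum --- a slightly cleaner route that does not depend on knowing the multiplicities in advance.
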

By convention, by Lyapunov spectrum $\operatorname{{\Lambda}Spec}$ we
call  the  top  $g$ Lyapunov exponents, where $g$ is the genus of the
flat surface under consideration.

The  first  statement  of the Proposition is a particular case of the
following elementary Lemma.
\begin{Lemma}
Let  $\sigma:  S\to  S$  be an automorphism of a square-tiled surface
preserving  an  Abelian differential $\omega$, which defines the flat
structure: $\sigma^\ast\omega=\omega$.

The  quotient surface $S/\sigma$ is a connected square-tiled surface.
\end{Lemma}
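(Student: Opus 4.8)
The plan is to use the defining property of a square-tiled surface: a unit-square tiling of $S$ compatible with $\omega$ is the same datum as a holomorphic branched covering $\pi\colon S\to E$ of the standard torus $E=\C{}/(\Z\oplus i\Z)$ with $\omega=\pi^\ast(dz)$, unramified away from the fibre over $0\in E$, and with all zeroes of $\omega$ (the cone points of the flat metric) developing to the lattice point $0$. First I would record two elementary consequences of the hypothesis $\sigma^\ast\omega=\omega$. Since $\sigma$ pulls $\omega=dz$ back to itself, in every flat chart $\sigma$ is a \emph{translation} $z\mapsto z+\mathrm{const}$; in particular, if $\sigma\ne\mathrm{id}$ it can have no fixed point at a regular point, for a fixed point would force the local translation constant to vanish and hence $\sigma=\mathrm{id}$ by analytic continuation. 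Thus $\sigma$ acts freely off the finite set of cone points. Moreover $\sigma$ preserves $\mathrm{div}\,\omega$, so it permutes the zeroes of $\omega$.

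Next I would show that $\sigma$ respects the covering $\pi$, i.e. $\pi\circ\sigma=\pi$. Fixing a cone point $s_0$ as base point and writing $\pi(p)=\int_{s_0}^{p}\omega \bmod (\Z\oplus i\Z)$, the identity $\sigma^\ast\omega=\omega$ gives $\pi\circ\sigma=T_c\circ\pi$, where $T_c$ is translation on $E$ by $c=\int_{s_0}^{\sigma(s_0)}\omega$. Because $\sigma(s_0)$ is again a cone point and all cone points lie over $0\in E$, the relative period $c$ lies in $\Z\oplus i\Z$, so $T_c=\mathrm{id}_E$. Hence $\sigma$ is a fibre-preserving automorphism of the finite branched cover $\pi$; in particular it preserves the pulled-back unit-square tiling, mapping squares to squares and vertices to vertices, and, being a fibre-preserving symmetry of a finite-degree cover, it has finite order.

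With finiteness in hand, $G=\langle\sigma\rangle$ is a finite group acting on the closed surface $S$, so $S/\sigma$ is a closed Riemann surface, and it is connected as the continuous image of the connected surface $S$. Finally I would push the structure down: since $\sigma$ preserves $\omega$ it descends to a holomorphic $1$-form $\bar\omega$ on $S/\sigma$, and since $\pi\circ\sigma=\pi$ the covering descends to $\bar\pi\colon S/\sigma\to E$ with $\bar\pi^\ast(dz)=\bar\omega$. As $\sigma$ acts freely away from the cone points, $\bar\pi$ is branched only over $0\in E$, which exhibits $S/\sigma$ as a connected square-tiled surface.

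I expect the one genuinely substantive step to be the verification that $c\in\Z\oplus i\Z$ — equivalently, that $\sigma$ does not merely shift the tiling by a fractional vector but honestly permutes the integer grid. This uses crucially that $S$ carries at least one cone point and that all cone points develop to lattice points. In the exceptional torus case (no zeroes of $\omega$) this argument is vacuous and one only obtains that $S/\sigma$ is tiled by squares of side $1/m$ for some integer $m$; after rescaling by $m$ it is a finite cover of the standard torus, hence again square-tiled, so the conclusion persists.
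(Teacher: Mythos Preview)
Your proof is correct, but the route differs from the paper's. The paper argues combinatorially: it observes that the unit-square tiling of $(S,\omega)$ is unique (with the convention that a torus carries a marked ``fake zero''), so any automorphism with $\sigma^\ast\omega=\omega$ must permute the squares of this tiling by translations; hence the quotient inherits a square tiling. For connectedness, the paper enumerates the squares and notes that the preimage of any connected component of $S/\sigma$ is a subset of squares invariant under both adjacency permutations $\pi_h,\pi_v$; since $S$ is connected these act transitively, so the preimage is everything.

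Your approach instead identifies a square-tiled surface with a branched cover $\pi:S\to E=\C{}/(\Z\oplus i\Z)$ and shows $\pi\circ\sigma=\pi$ by checking that the translation constant lies in the integer lattice (because $\sigma$ permutes cone points, all of which sit over $0$). This yields finite order of $\sigma$ and lets you descend $\omega$ and $\pi$ simultaneously; connectedness is then immediate as a continuous image. Your argument is more explicit about the Riemann-surface structure on the quotient and about why $\bar\omega$ and $\bar\pi$ exist, at the cost of a little more machinery. The paper's version is shorter and purely combinatorial, and handles the torus case by the marked-point convention rather than by your separate rescaling argument; your key step ``$c\in\Z\oplus i\Z$'' is precisely the analytic counterpart of the paper's ``uniqueness of the tiling forces $\sigma$ to permute squares.''
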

\begin{proof}
First  note,  that  if a flat surface $(S,\omega)$ admits a tiling by
unit  squares,  such  tiling is unique. (By convention, when $S$ is a
torus, it is endowed with a marked point serving as a ``fake zero''.)
Hence, an automorphism $\sigma$ maps squares of the tiling to squares
of  the  same  tiling by parallel translations. This implies that the
quotient surface is square-tiled.

Let  us  enumerate  the  squares  by  numbers  $1,\dots,M$. Denote by
$\pi_h,\pi_v$  permutations  indicating  the  squares adjacent to the
right (correspondingly atop) to every square of the tiling.

Consider  some  nonsingular  component  of  the quotient surface. Let
$A\subseteq\{1,\dots,M\}$  denote  the  subset  of all squares of the
initial  surface $S$ which project to this nonsingular component. The
subset  $A$  is  invariant  under both $\pi_h$ and $\pi_v$. Since the
initial  surface is nondegenerate, it implies that $A$ coincides with
the entire set $\{1,\dots,M\}$.
\end{proof}

\begin{Remark}
We  have,  actually,  shown  in  the  proof of the Lemma above that a
square-tiled  surface  $(S,\omega)$  admits a nontrivial automorphism
$\sigma:  S\to  S$, such that $\sigma^\ast\omega=\omega$, if and only
if  there  exists a nontrivial permutation $\sigma$ of squares of the
tiling, which commutes with both $\pi_h$ and $\pi_v$.
\end{Remark}

\begin{proof}[Proof of Proposition~\ref{pr:symmetry}]
It  remains  to prove the statement concerning the Lyapunov spectrum.
By  construction  of  the  involution  $\tau$, the induced involution
$\tau^\ast$  on cohomology interchanges the eigenspaces corresponding
to eigenvalues $\zeta^k$ and $\zeta^{-k}=\zeta^{N-k}$:
\begin{equation}
\label{eq:action:tau}
\tau^\ast V(k)=V(N-k)\qquad\qquad \tau^\ast V^{1,0}(k)= V^{1,0}(N-k)\ ,
\end{equation}
and  preserves the subspaces $W(k)\subset H^1(X,\R{})$ defined in the
beginning of section~\ref{ss:Lyapunov:spectrum:for:cyclic:covers}.

For  any  $k$  such that $1\le k< N/2$ we can represent the subspaces
$V(k)\oplus  V(N-k)$,  $V^{1,0}(k)\oplus  V^{1,0}(N-k)$ and $W(k)$ as
direct  sums  of  invariant  and  anti-invariant  subspaces under the
involution $\tau^\ast$:
\begin{align*}
V(k)\oplus V(N-k)&=V_+(k)\oplus V_-(k)\\
V^{1,0}(k)\oplus V^{1,0}(N-k)&=V^{1,0}_+(k)\oplus V^{1,0}_-(k)\\
W(k)&=W_+\oplus W_-
\end{align*}
Since    for   $k<N/2$   one   has   $V(k)\cap   V(N-k)=\{0\}$,   the
relations~\eqref{eq:action:tau}  imply  that  for $k<N/2$ each of the
subspaces  $V_+(k)$ and $V^{1,0}_+(k)$ has half of a dimension of the
corresponding    ambient    subspace    $V(k)\oplus    V(N-k)$    and
$V^{1,0}(k)\oplus   V^{1,0}(N-k)$.   By  construction,  the  subspace
$V_+(k)$  is invariant under the complex conjugation. Hence, the real
subspace  $W_+(k)$  has  half  of  a  dimension  of  the ambient real
subspace  $W(k)$.  Moreover, $W_+(k)$ is a symplectic $\SL$-invariant
and Hodge star-invariant subspace.

It  follows from the Theorem~\ref{th:main:theorem} that if the vector
bundle  $W(k)$,  where  $k<N/2$,  has  at  least one nonzero Lyapunov
exponent,  the  bundle $W(k)$ necessarily has dimension four, and its
Lyapunov           spectrum           has           the          form
$\{\lambda,\lambda,-\lambda,-\lambda\}$. Hence, the Lyapunov spectrum
of   the   subbundle   $W_+(k)$   of   such   bundle   has  the  form
$\{\lambda,-\lambda\}$.

Note also that by construction the subspaces $V(N/2)$, $V^{1,0}(N/2)$
and,  hence,  $W(N/2)$ are invariant under the involution $\tau^\ast$
which   acts   on   these   subspaces   as  an  identity  map.  Thus,
$V_+(N/2)=V(N/2)$,          $V_+^{1,0}(N/2)=V^{1,0}(N/2)$         and
$W_+(N/2)=W(N/2)$.  Recall  that the Lyapunov spectrum of $W(N/2)$ is
$\{1,-1\}$.

Consider now a square-tiled surface $S$ obtained as a quotient of the
square-tiled cyclic cover $M_N(a,N-a,b,N-b)$ over $\tau$. Clearly, we
have canonical isomorphisms
\begin{align}
\label{eq:quotient:decomposition}
H^1(S,\C{})&\simeq \bigoplus_{k\le N/2}V_+(k)\notag\\
H^{1,0}(S)&\simeq \bigoplus_{k\le N/2}V^{1,0}_+(k)\\
H^1(S,\R{})&\simeq \bigoplus_{k\le N/2}W_+(k)\notag
\end{align}
Taking  into  consideration  our observations concerning the Lyapunov
spectrum  of  the  subbundles  $W(k)$,  this implies the statement of
Proposition~\ref{pr:symmetry}  concerning  the  spectrum  of Lyapunov
exponents of the quotient surface.
\end{proof}

Let us consider in more detail two particular cases.
\medskip

%-----------------------------------------------------------------
\subsection{Lyapunov spectrum of ``stairs'' square-tiled surfaces}
\label{a:stairs}

Consider  a  following  square-tiled surface $S(N)$. Take $N$ squares
and arrange them cyclically into a cylinder of width $N$ and of hight
$1$.  A  permutation  $\pi_h$,  which indicates a right neighbor of a
square number $k$, is given by a cycle $(1,\dots,N)$. Now identify by
a  parallel  translation the top horizontal side of the square number
$k$  to  the  bottom  horizontal side of the square number $N+1-k$. A
permutation $\pi_v$, which indicates a neighbor above a square number
$k$, is as follows:
$$
\pi_v=
\begin{pmatrix}
N & N-1 & \dots &  2  & 1\\
1   &  2  & \dots & N-1 & N
\end{pmatrix}
$$
The     resulting    square-tiled    surface    is    presented    at
Figure~\ref{fig:palindrome}.

\begin{figure}[hb]
%\centering
   %
\includegraphics{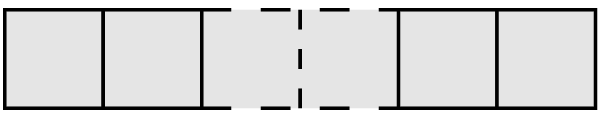}
\begin{picture}(0,0)(60,-7) % (60,38)
\put(10,-25){1}
\put(30,-25){2}
\put(46,-25){$\cdots$}
\put(66,-25){$\cdots$}
\put(84,-25){N-1}
\put(110,-25){N}
\put(10,-10){\tiny N}
\put(27,-10){\tiny N-1}
\put(91,-10){\tiny 2}
\put(111,-10){\tiny 1}
\end{picture}
\vspace{20bp}
\caption{
\label{fig:palindrome}
Square-tiled surface $S(N)$.}
\end{figure}

\begin{NNConvention}
Representing a square-tiled surface by a polygonal pattern we usually
try  to  respect  a decomposition into horizontal cylinders. Thus, by
convention,   an   unmarked  vertical  segment  of  the  boundary  is
identified  by  a parallel translation with another unmarked vertical
segment  of  the  boundary  located  at the same horizontal level. To
specify  identification of the horizontal segments of the boundary we
give  a  number  of  a  square located atop of each square of the top
boundary.  When  such  square  is  not  indicated,  it  means that we
identify   the   two  horizontal  segments  by  a  vertical  parallel
translation.

Note  that  a  general square-tiled surface has neither distinguished
polygonal pattern nor distinguished enumeration of the squares.
\end{NNConvention}

It  is  immediate  to  see  that the square-tiled surface $S(N)$ from
Figure~\ref{fig:palindrome}  belongs  to  the stratum $\cH(2g-2)$ for
$N=2g-1$,  and  to  the stratum $\cH(g-1,g-1)$ for $N=2g$. Clearly, a
central  symmetry  of  the  pattern  from Figure~\ref{fig:palindrome}
extends  to  an  involution  of  the surface. This involution has two
fixed  points at the waist curve of the cylinder. It also has a fixed
point  at  the middle of each of the $N$ horizontal sides of squares.
The  involution  fixes the single zero when $N=2g-1$ and interchanges
the  two  zeroes  when  $N=2g$. Thus, the involution has $2g+2$ fixed
points,  so  it  is  a hyperelliptic involution. We conclude that the
surface  $S(N)$  belongs  to  the  hyperelliptic  connected component
$\cH^{hyp}(2g-2)$,  when $N=2g-1$, and to the hyperelliptic connected
component $\cH^{hyp}(g-1,g-1)$, when $N=2g$.

\begin{Proposition}
\label{pr:palindrome}
The Hodge bundle $H^1_{\R{}}$ over the arithmetic Teichm\"uller curve
of  the  surface  $S(N)$  from Figure~\ref{fig:palindrome} decomposes
into   a   direct   sum   of  $\SL$-invariant,  Hodge  star-invariant
two-dimensional symplectic subbundles
$$
H^1_{\R{}}\simeq \bigoplus_{k\le N/2}W_+(k)
$$
The  Lyapunov  spectrum  $\operatorname{{\Lambda}Spec}$  of the Hodge
bundle  $H^1_{\R{}}$  over the corresponding arithmetic Teichm\"uller
curve is
$$
\operatorname{{\Lambda}Spec}=
\begin{cases}
\cfrac{1}{N}, \cfrac{3}{N}, \cfrac{5}{N}, \dots, \cfrac{N}{N}
       & \text{ when } N=2g-1\\&\\
\cfrac{2}{N}, \cfrac{4}{N}, \cfrac{6}{N}, \dots, \cfrac{N}{N} & \text{ when } N=2g
\end{cases}
$$
\end{Proposition}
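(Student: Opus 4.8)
The plan is to recognize the staircase $S(N)$ as the $\tau$-quotient of an explicit square-tiled cyclic cover of the type treated in Proposition~\ref{pr:symmetry}, and then to read off its spectrum from Theorem~\ref{th:main:theorem}. Concretely, I would claim that when $N=2g$ is even, $S(N)$ is isomorphic, as a translation surface (hence lies on the same arithmetic Teichm\"uller curve), to the quotient of $M_N(1,N-1,N-1,1)$ by the involution $\tau$; and when $N=2g-1$ is odd, $S(N)$ is the $\tau$-quotient of $M_{2N}(1,2N-1,N,N)$. Write $N'$ for the cyclic order of the cover, so $N'=N$ in the even case and $N'=2N$ in the odd case. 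In both cases $N'$ is even and all four exponents $a_i$ are odd ($N-1$ is odd for even $N$, and $N$ itself is odd for odd $N$), so Proposition~\ref{pr:symmetry} applies and already supplies the decomposition $H^1_{\R{}}\simeq\bigoplus_{k\le N'/2}W_+(k)$ into two-dimensional $\SL$-invariant, Hodge star-invariant symplectic pieces (the vanishing summands being discarded), which is the first assertion of the Proposition.

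The geometric identification is the heart of the argument and, I expect, the main obstacle. I would carry it out at the level of flat structures: the pillow differential $q_0$ pulls back to $\omega^2$ on the cover, the involution $\tau$ of $\CP$ exchanging the two pairs of corners lifts to $\tau$ with $\tau^\ast\omega=\omega$, and the quotient inherits a square-tiling by the Lemma preceding Proposition~\ref{pr:symmetry}. I would then match this quotient tiling to the pattern of Figure~\ref{fig:palindrome}, either by computing its horizontal and vertical gluing permutations $(\pi_h,\pi_v)$ and checking that they are conjugate to the cyclic permutation $(1,\dots,N)$ and the reversal $k\mapsto N+1-k$, or by pinning down the number of squares, the singularity data, and the hyperelliptic symmetry so as to fix the component $\cH^{hyp}(2g-2)$ (resp. $\cH^{hyp}(g-1,g-1)$) and the correct Teichm\"uller curve. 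A careful bookkeeping of the squares of the cover under the $\Z/N'\Z$-action and under $\tau$ is what makes this step nontrivial.

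Granting the identification, the remaining computation is elementary. Writing $t_i(k)=\{a_ik/N'\}$, in the even case $N'=N$, $(a_i)=(1,N-1,N-1,1)$, one finds $t_1(k)=t_4(k)=\{k/N\}$ and $t_2(k)=t_3(k)=1-\{k/N\}$, whence $t(k)=2$ for every $1\le k\le N-1$ and
\[
d(k)=\min\big(\{k/N\},\,1-\{k/N\}\big)=\tfrac{1}{N}\min(k,N-k).
\]
In the odd case $N'=2N$, $(a_i)=(1,2N-1,N,N)$, one gets $t_1(k)=\{k/(2N)\}$, $t_2(k)=1-\{k/(2N)\}$, $t_3(k)=t_4(k)=\{k/2\}$, so $t(k)=2$ exactly when $k$ is odd (and $t(k)=t(N'-k)=1$ for even $k$, killing $W(k)$ by Theorem~\ref{th:main:theorem}), while for odd $k$
\[
d(k)=\min\big(\{k/(2N)\},\,1-\{k/(2N)\},\,\tfrac12\big)=\tfrac{1}{2N}\min(k,2N-k).
\]

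Finally I would assemble the spectrum using the recipe in the proof of Proposition~\ref{pr:symmetry}: each four-dimensional $W(k)$ with $k<N'/2$ (signature $(1,1)$, so $d(k)>0$ by Theorem~\ref{th:degree}) contributes the single nonnegative value $2d(k)$ to $\operatorname{{\Lambda}Spec}_+$, while the middle bundle $W(N'/2)$, present because all $a_i$ are odd, contributes the value $1=N/N$. In the even case this gives $\{2k/N:1\le k<N/2\}\cup\{1\}=\{\tfrac2N,\tfrac4N,\dots,\tfrac{N}{N}\}$, and in the odd case $\{k/N:k\text{ odd},\,1\le k<N\}\cup\{1\}=\{\tfrac1N,\tfrac3N,\dots,\tfrac{N}{N}\}$. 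A count of the nonzero summands shows there are exactly $g$ of them in each case, so these are precisely the top $g$ exponents --- the claimed arithmetic progression --- and the symmetry $\lambda\mapsto-\lambda$ then recovers the full two-dimensional spectrum $\{\lambda,-\lambda\}$ of each $W_+(k)$.
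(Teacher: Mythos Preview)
Your proposal is correct and follows essentially the same route as the paper: identify $S(N)$ as the $\tau$-quotient of an explicit cyclic cover ($M_N(N-1,1,N-1,1)$ for even $N$, $M_{2N}(2N-1,1,N,N)$ for odd $N$ --- your choices differ only by a harmless permutation of the $a_i$), invoke Proposition~\ref{pr:symmetry} for the decomposition and the halving of the spectrum, and compute the spectrum of the cover via Theorem~\ref{th:main:theorem}. The paper packages the geometric identification into two explicit cut-and-paste lemmas (working with the isomorphic model $M_{2N}(N+2,N-2,N,N)$ in the odd case) and the spectral computation into two more; your arithmetic with $t_i(k)$ and $d(k)$ reproduces theirs exactly.
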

\begin{proof}
We  show  in Lemma~\ref{lm:quotient:odd} that for odd $N$ the surface
$S(N)$  in Figure~\ref{fig:palindrome} corresponds to a quotient of a
cyclic  cover $M_{2N}(2N-1,1,N,N)$ over the involution $\tau$ defined
by                equation~\eqref{eq:involution}.               Using
Theorem~\ref{th:main:theorem}   and   Corollary~\ref{cr:alorithm}  we
compute  in Lemma~\ref{lm:spectrum:odd} the Lyapunov spectrum for the
Teichm\"uller     curve    of    $M_{2N}(2N-1,1,N,N)$    and    apply
Proposition~\ref{pr:symmetry}.

Similarly, we show in Lemma~\ref{lm:quotient:even} that for even
$N$, the surface $S(N)$ in Figure~\ref{fig:palindrome} corresponds to
a  quotient  of a square-tiled cyclic cover $M_{N}(N-1,1,N-1,1)$ over
the    involution    $\tau$    as   in~\eqref{eq:involution}.   Using
Theorem~\ref{th:main:theorem}   and   Corollary~\ref{cr:alorithm}  we
compute  the Lyapunov spectrum for the arithmetic Teichm\"uller curve
of $M_{N}(N-1,1,N-1,1)$ and apply Proposition~\ref{pr:symmetry}.
\end{proof}

\begin{Lemma}
\label{lm:quotient:odd}
For   odd  $N$  the  surface  $S(N)$  in  Figure~\ref{fig:palindrome}
corresponds   to  the  quotient  of  the  square-tiled  cyclic  cover
$M_{2N}(2N-1,1,N,N)$     over     the     involution     $\tau$    as
in~\eqref{eq:involution}.
\end{Lemma}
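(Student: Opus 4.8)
The plan is to realize $M_{2N}(2N-1,1,N,N)$ as an explicit unit-square tiling carrying the two symmetries of Proposition~\ref{pr:symmetry}, and then to fold along $\tau$ and match the result with the gluing pattern of $S(N)$ in Figure~\ref{fig:palindrome}. Note first that $(2N-1,1,N,N)$ is exactly of the form $M_{2N}(a,2N-a,b,2N-b)$ underlying~\eqref{eq:involution}, with $a=2N-1$ and $b=N$; the congruences $a_2\equiv-a_1$ and $a_4\equiv-a_3\pmod{2N}$ are precisely what allows the involution $\tau$ (interchanging $z_1\leftrightarrow z_2$ and $z_3\leftrightarrow z_4$ and conjugating $T$ into $T^{-1}$) to exist. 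The whole point is thus to show that \emph{this} choice of $a,b$ produces the ``stairs'' surface rather than some other quotient.

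I would begin with the flat geometry. Since $2N$ is even and all the exponents are odd, the induced quadratic differential is a global square $\omega^2$, so the cover is a genuine translation surface. As $\gcd(2N-1,2N)=\gcd(1,2N)=1$, the points $z_1,z_2$ are totally ramified with a single preimage carrying a zero of $\omega$ of order $N-1$, whereas $\gcd(N,2N)=N$ makes $z_3,z_4$ unramified with $N$ regular preimages each; hence $M_{2N}(2N-1,1,N,N)\in\cH(N-1,N-1)$ and, by the genus formula (see the proof of Lemma~\ref{lm:g}), has genus $N$. A Riemann--Hurwitz count for the degree-two quotient $M_{2N}(2N-1,1,N,N)\to M_{2N}(2N-1,1,N,N)/\tau$, with genus $N$ upstairs and the genus $(N+1)/2$ of $S(N)$ downstairs, forces the number of fixed points to vanish: $\tau$ is \emph{free}, the quotient lies in $\cH(N-1)$ --- the stratum of $S(N)$ for odd $N$ --- and, with the pillow normalised so that the cover is tiled by $2N$ unit squares, the quotient acquires $N$ squares.

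Next I would pin down the symmetries on the tiling. The relation $T^\ast\omega=-\omega$ forces $T$ to act as a permutation of the squares composed with a rotation by $\pi$ in each square, while $\tau^\ast\omega=\omega$ forces $\tau$ to act by pure translations; by~\eqref{eq:involution}, $\tau T\tau=T^{-1}$. Concretely I expect to present the tiling as a cyclic chain of $2N$ squares on which $T$ is ``shift by one square together with the $\pi$-rotation'' (an order-$2N$ map whose square is the order-$N$ translation $T^2$), while $\tau$ is the free translation ``shift by $N$ squares''; the two zeros of $\omega$, lying over $z_1$ and $z_2$, are then interchanged by $\tau$, matching $\tau\colon z_1\leftrightarrow z_2$. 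Folding this chain along $\tau$, that is identifying square $j$ with square $j+N$, leaves $N$ squares whose induced horizontal gluing is the $N$-cycle $(1,\dots,N)$ and whose induced vertical gluing is $k\mapsto N+1-k$ --- precisely the data recorded in Figure~\ref{fig:palindrome}. Connectedness of the quotient is the first assertion of Proposition~\ref{pr:symmetry}.

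The hard part is the verification underlying the previous paragraph: one must assemble the $2N$ squares so that the cyclic group $\langle T\rangle$ really carries the prescribed branch data, i.e. so that the local monodromies of $M_{2N}(2N-1,1,N,N)\to M_{2N}(2N-1,1,N,N)/\langle T\rangle=\CP$ about $z_1,z_2,z_3,z_4$ are $T^{2N-1},T,T^{N},T^{N}$, and simultaneously so that folding along $\tau$ reproduces the lift $k\mapsto N+1-k$ of the vertical gluing rather than the other admissible lift. I would carry this out explicitly on the first nontrivial case $M_6(5,1,3,3)$ --- writing the six squares, the order-six symmetry $T$, the free involution $\tau$, and checking that the holonomy commutator has cycle type $(3,3)$ (so the surface lies in $\cH(2,2)$) and that the $\tau$-quotient is $S(3)$ --- and then check that the same cyclic bookkeeping persists for general odd $N$.
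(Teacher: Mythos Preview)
Your overall plan --- build the square tiling of $M_{2N}(2N-1,1,N,N)$, locate $T$ and $\tau$ on it, fold along $\tau$, match with Figure~\ref{fig:palindrome} --- is exactly the paper's strategy. The topological bookkeeping (genus $N$, stratum $\cH(N-1,N-1)$, Riemann--Hurwitz forcing $\tau$ to be free, quotient in $\cH(N-1)$) is all correct and is a nice sanity check.

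There is, however, a real error in the combinatorial model you propose. You describe the tiling as a cyclic chain of $2N$ squares on which $T$ is ``shift by one together with the $\pi$-rotation'' and $\tau$ is ``shift by $N$ squares''. Two shifts along the same cyclic chain \emph{commute}, regardless of any accompanying $\pi$-rotation; so your model gives $\tau T\tau^{-1}=T$, not the required $\tau T\tau^{-1}=T^{-1}$ from~\eqref{eq:involution}. Any correct realisation of $\tau$ on the tiling must involve an \emph{index reversal}, not a pure shift. Relatedly, the pillow has two unit squares, so the degree-$2N$ cover carries $4N$ unit squares, not $2N$; your count is off by a factor of two, and this is tied to the missing step below.

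For comparison, the paper first passes to the isomorphic model $M_{2N}(N+2,N-2,N,N)$ to simplify the combinatorics, labels the $4N$ squares as $2N$ ``white'' and $2N$ ``black'' with $T$ acting as $k\mapsto k+1\pmod{2N}$ within each colour, and finds that $\tau$ acts by
\[
\tau(k_{\mathrm{white}})=(N-1-k)_{\mathrm{black}},
\]
which visibly conjugates $T$ to $T^{-1}$. The quotient then has $2N$ (not $N$) unit squares; the key observation is that the gluings pair squares $2k,2k+1$ with $2N-2-2k,2N-1-2k$, so the $2N$ squares group into $N$ rectangles of size $2\times 1$, and only after an affine rescaling does one obtain $S(N)$. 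Your proposal omits this rescaling entirely. If you replace ``$\tau=$ shift by $N$'' with an honest reflection $k\mapsto c-k$ (for the appropriate constant $c$) and track the resulting $2N$-square quotient through the $2\times 1$ regrouping, your argument will go through.
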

\begin{proof}
Note  that  for  any  integer  $a<2N$  such  that  $\gcd(2N,a)=1$ the
square-tiled cyclic covers
$$
M_{2N}(2N-1,1,N,N) \simeq M_{2N}(2N-a,a,N,N)
$$
are  isomorphic. To simplify combinatorics it would be easier to work
with $M_{2N}(N+2,N-2,N,N)\simeq M_{2N}(2N-1,1,N,N)$.

\begin{figure}[htb]
%\centering
   %
\includegraphics{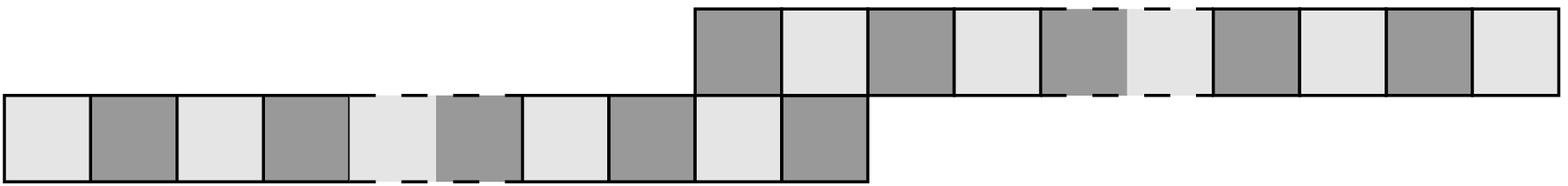}
\includegraphics{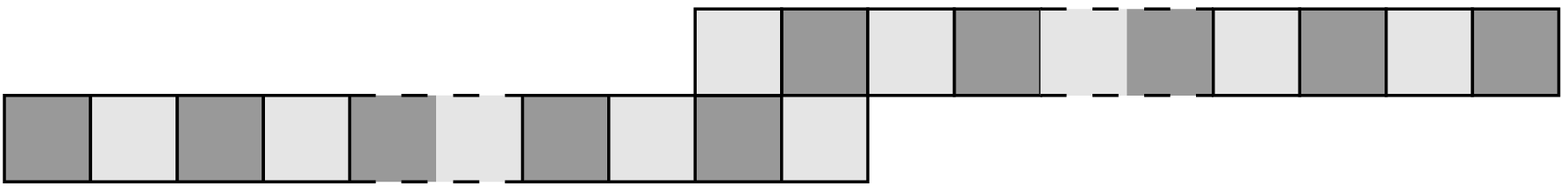}
\begin{picture}(0,0)(178,16.5) % (180,18)
\begin{picture}(0,0)(0,0)
\put(10,-25){\tiny 0}
\put(30,-25){\tiny 1}
\put(50,-25){\tiny 2}
\put(70,-25){\tiny 3}
\put(96,-25){$\cdots$}
\put(123,-25){\tiny 2N-4}
\put(143,-25){\tiny 2N-3}
\put(163,-25){\tiny 2N-2}
\put(184,-25){\tiny 2N-1}
\put(5,-10){\tiny N+1}
\put(29,-10){\tiny N}
\put(45,-10){\tiny N+3}
\put(65,-10){\tiny N+2}
\put(125,-10){\tiny N-3}
\put(145,-10){\tiny N-4}
\end{picture}
\begin{picture}(0,0)(-155,-20)
\put(5,-25){\tiny N-1}
\put(25,-25){\tiny N-2}
\put(45,-25){\tiny N-3}
\put(65,-25){\tiny N-4}
\put(96,-25){$\cdots$}
\put(124,-25){\tiny N+3}
\put(144,-25){\tiny N+2}
\put(164,-25){\tiny N+1}
\put(189,-25){\tiny N}
\put(2,-10){\tiny 2N-2}
\put(22,-10){\tiny 2N-1}
\put(42,-10){\tiny 2N-4}
\put(62,-10){\tiny 2N-3}
\put(130,-10){\tiny 2}
\put(150,-10){\tiny 3}
\put(170,-10){\tiny 0}
\put(190,-10){\tiny 1}
\end{picture}
\end{picture}
   %
   % --------------------------------
   %
\begin{picture}(0,0)(182,87) % (182,87)
\begin{picture}(0,0)(0,0)
\put(5,-25){\tiny N-1}
\put(25,-25){\tiny N-2}
\put(45,-25){\tiny N-3}
\put(65,-25){\tiny N-4}
\put(96,-25){$\cdots$}
\put(124,-25){\tiny N+3}
\put(144,-25){\tiny N+2}
\put(164,-25){\tiny N+1}
\put(189,-25){\tiny N}
\put(4,-10){\tiny 2N-2}
\put(24,-10){\tiny 2N-1}
\put(44,-10){\tiny 2N-4}
\put(64,-10){\tiny 2N-3}
\put(130,-10){\tiny 2}
\put(150,-10){\tiny 3}
\end{picture}
\begin{picture}(0,0)(-155,-20)
\put(10,-25){\tiny 0}
\put(30,-25){\tiny 1}
\put(50,-25){\tiny 2}
\put(70,-25){\tiny 3}
\put(96,-25){$\cdots$}
\put(123,-25){\tiny 2N-4}
\put(143,-25){\tiny 2N-3}
\put(163,-25){\tiny 2N-2}
\put(184,-25){\tiny 2N-1}
\put(5,-10){\tiny N+1}
\put(29,-10){\tiny N}
\put(45,-10){\tiny N+3}
\put(65,-10){\tiny N+2}
\put(125,-10){\tiny N-3}
\put(145,-10){\tiny N-4}
\put(165,-10){\tiny N-1}
\put(185,-10){\tiny N-2}
\end{picture}
\end{picture}
\vspace{125bp}
\caption{
\label{fig:odd}
Square-tiled $M_{2N}(N+2,N-2,N,N)$, where $N$ is odd. A square
atop a white one is always black and vice versa.
}
\end{figure}

Suppose  that the powers $N+2,N-2,N,N$ are distributed at the corners
of a ``square pillow'' representing our flat $\CP$ as follows:

\includegraphics{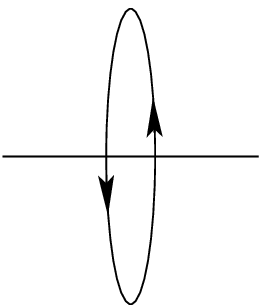}
\includegraphics{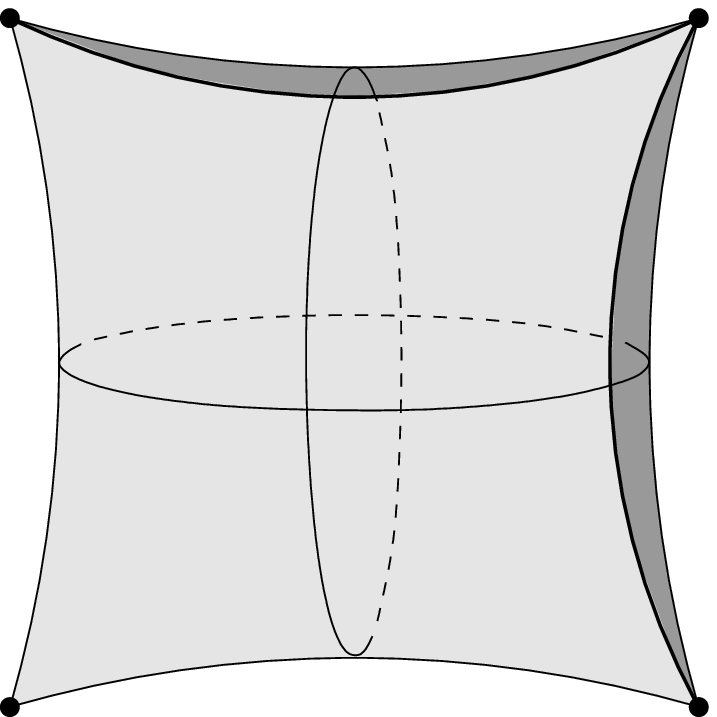}
\begin{picture}(0,0)(5,7)
\put(125,-3){$N+2$}
\put(125,-33){$N-2$}
\put(190,-3){$N$}
\put(190,-33){$N$}
\end{picture}
\vspace{50bp}

Let  us  color  the  ``visible''  square  of the ``square pillow'' in
white,  and the complementary square in black. It is easy to see that
Figure~\ref{fig:odd}    represents    two    unfoldings   (preserving
enumeration  of  the  squares)  of the same square-tiled cyclic cover
$M_{2N}(N+2,N-2,N,N)$,  where we enumerate separately white and black
squares  by  numbers  from  $0$  to  $2N-1$.  Moreover,  the  natural
generator  of  the group of deck transformations maps a square number
$k$ to a square of the same color having number $(k+1)\!\mod 2N$.

\begin{figure}[htb]
%\centering
   %
\special{
%psfile=M_10_9155_quotient.eps
psfile=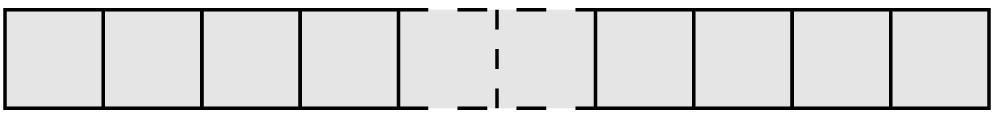
hscale=70
vscale=70
hoffset=82 % 160
voffset=-35 % -75
}
\includegraphics{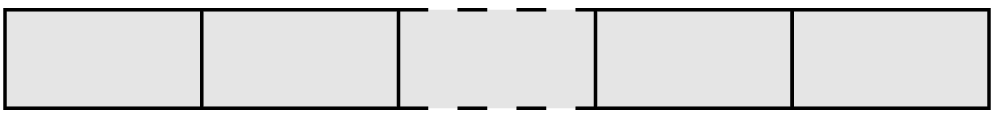}
\begin{picture}(0,0)(0,0)
\begin{picture}(0,0)(97,2)
\put(10,-25){\tiny 0}
\put(30,-25){\tiny 1}
\put(50,-25){\tiny 2}
\put(70,-25){\tiny 3}
\put(84,-25){$\cdots$}
\put(106,-25){$\cdots$}
\put(123,-25){\tiny 2N-4}
\put(143,-25){\tiny 2N-3}
\put(163,-25){\tiny 2N-2}
\put(184,-25){\tiny 2N-1}
\put(3,-10){\tiny 2N-2}
\put(23,-10){\tiny 2N-1}
\put(44,-10){\tiny 2N-4}
\put(65,-10){\tiny 2N-3}
\put(130,-10){\tiny 2}
\put(150,-10){\tiny 3}
\put(170,-10){\tiny 0}
\put(190,-10){\tiny 1}
\end{picture}
\begin{picture}(0,0)(97,52)
\put(17,-25){\tiny A}
\put(57,-25){\tiny B}
\put(90,-25){$\cdots$}
\put(133,-25){\tiny Y}
\put(173,-25){\tiny Z}
\put(17,-10){\tiny Z}
\put(57,-10){\tiny Y}
\put(133,-10){\tiny B}
\put(173,-10){\tiny A}
\end{picture}
\end{picture}
\vspace{90bp}
\caption{
\label{fig:odd:quotient}
Quotient  of  a  square-tiled cyclic cover $M_{2N}(N+2,N-2,N,N)$ over
the involution $\tau$.}
\end{figure}

In  enumeration of Figure~\ref{fig:odd} the involution $\tau$ defined
in equation~\eqref{eq:involution} acts as
$$
\tau(k_{\mathit{white}})=(N-1-k)_{\mathit{black}} \ ,
$$
which  corresponds  to  a  superposition  of  the two patterns of our
square-tiled  surface indicated at Figure~\ref{fig:odd}. Passing to a
quotient  over $\tau$ we obtain a square-tiled surface represented at
Figure~\ref{fig:odd:quotient}. Note, that the top horizontal sides of
each pair of adjacent squares having numbers $2k$ and $2k+1$ is glued
to  bottom  horizontal  sides of a pair of consecutive squares number
$2N-2-2k$   and   $2N-1-2k$.  Hence,  we  can  tile  the  surface  at
Figure~\ref{fig:odd:quotient}  with  rectangles  of size $2\times 1$.
Rescaling  the  resulting  surface by a contraction in the horizontal
direction  and  by  an  expansion  in  a  vertical  direction,  which
transforms  rectangles  of size $2\times 1$ into squares, we obtain a
square-tiled surface $S(N)$ as at Figure~\ref{fig:palindrome}.
\end{proof}

\begin{Lemma}
\label{lm:spectrum:odd}
The  spectrum  of  nonnegative Lyapunov exponents of the Hodge bundle
$H^1_{\R{}}$  over  the  Teichm\"uller curve of a square-tiled cyclic
cover $M_{2N}(2N-1,1,N,N)$ has the following form for odd $N>1$:
$$
\operatorname{{\Lambda}Spec}=
\left\{
\cfrac{1}{N},\ \cfrac{1}{N},\ \cfrac{3}{N},\ \cfrac{3}{N},\  \dots\ ,\cfrac{N-2}{N},\ \cfrac{N-2}{N},\ 1
\right\}
$$
\end{Lemma}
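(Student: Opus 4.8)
The plan is to apply the algorithm of Corollary~\ref{cr:alorithm} directly, so I first compute the quantities $t(k)$ and then the orbifold degrees $d(k)$ for the cover $M_{2N}(2N-1,1,N,N)$; here the modulus of the cyclic group is $2N$ and the weights are $(a_1,a_2,a_3,a_4)=(2N-1,1,N,N)$. Recording $t_i(k)=\{a_i k/(2N)\}$ for $k=1,\dots,2N-1$, one has $(2N-1)k/(2N)=k-k/(2N)$, so $t_1(k)=(2N-k)/(2N)$, while $t_2(k)=k/(2N)$ and $t_3(k)=t_4(k)=\{k/2\}$, which is $0$ for even $k$ and $1/2$ for odd $k$. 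Since $t_1(k)+t_2(k)=1$ always, I get $t(k)=1+2\{k/2\}$; thus $t(k)=1$ exactly when $k$ is even and $t(k)=2$ exactly when $k$ is odd.

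By Corollary~\ref{cr:alorithm} the even values of $k$ contribute nothing to $\operatorname{{\Lambda}Spec}$, while each odd $k$ contributes one entry $2d(k)$. To evaluate $d(k)$ through Theorem~\ref{th:degree} I must verify its hypothesis $t(2N-k)=2$; because $2N$ is even, $2N-k$ has the same parity as $k$, so for odd $k$ indeed $t(2N-k)=2$ and the theorem applies. It yields $d(k)=\min\bigl(t_1(k),1-t_1(k),\dots,t_4(k),1-t_4(k)\bigr)$, whose eight arguments are $k/(2N)$ and $(2N-k)/(2N)$ (each appearing twice, from $t_1$ and $t_2$) together with four copies of $1/2$. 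As $\min(k,2N-k)/(2N)\le 1/2$, the minimum is attained among the first group, giving $d(k)=\min(k,2N-k)/(2N)$ and hence $2d(k)=\min(k,2N-k)/N$.

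Finally I collect these entries over the odd $k\in\{1,\dots,2N-1\}$. Since $N$ is odd, the involution $k\mapsto 2N-k$ permutes the odd values and fixes only $k=N$, where $2d(N)=N/N=1$. The remaining odd values form the pairs $\{k,2N-k\}$ with $k\in\{1,3,\dots,N-2\}$, and both members of each pair give the same entry $k/N$, so every number $1/N,3/N,\dots,(N-2)/N$ occurs exactly twice. This is precisely the multiset $\{\tfrac1N,\tfrac1N,\tfrac3N,\tfrac3N,\dots,\tfrac{N-2}N,\tfrac{N-2}N,1\}$ claimed. As a check, it contains $N$ numbers, matching the genus $g=2N+1-\tfrac12(1+1+N+N)=N$ of $M_{2N}(2N-1,1,N,N)$. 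I expect no genuine obstacle: the computation is routine, the only delicate point being the parity bookkeeping that ensures $t(2N-k)=2$ for every odd $k$, so that Theorem~\ref{th:degree} is applicable throughout.
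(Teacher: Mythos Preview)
Your proof is correct and follows essentially the same approach as the paper: both compute the fractional parts $t_i(k)$, observe that $t_1(k)+t_2(k)=1$ and $t_3(k)=t_4(k)=\{k/2\}$, deduce that $t(k)$ equals $1$ or $2$ according to the parity of $k$, and then apply Corollary~\ref{cr:alorithm} together with the degree formula of Theorem~\ref{th:degree}. If anything, you are slightly more explicit than the paper in verifying the hypothesis $t(2N-k)=2$ of Theorem~\ref{th:degree}, in explaining the pairing $k\leftrightarrow 2N-k$ that accounts for the doubled entries, and in cross-checking the cardinality of the spectrum against the genus.
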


\begin{proof}
Since $\gcd(2N,2N-1)=\gcd(2N,1)=1$, we conclude that
$$
t_1(k)>0\quad \text{ and }\quad
t_2(k)>0\quad\text{for } k=1,\dots,N-1\ ,
$$
where
$$
t_1(k)=\left\{\frac{2N-1}{2N}\cdot k\right\} \text{ and }
t_2(k)=\left\{\frac{1}{2N}\cdot k\right\}\ ,
$$
see~\eqref{eq:tik}. Since
$$
\frac{2N-1}{2N}\cdot k+
\frac{1}{2N}\cdot k=k\in\Z
$$
we obtain
$$
t_1(k)+t_2(k)=1\quad\text{for }k=1,\dots,N-1\ .
$$
Note that
$$
t_3(k)=t_4(k)=\left\{\frac{N}{2N}\cdot k\right\}=
\left\{\frac{k}{2}\right\}\ .
$$
Hence,    by    definition~\eqref{eq:tk}    of    $t(k)$    and    by
formula~\eqref{eq:dim:duality} we get
$$
\dim_{\C{}} V^{1,0}(k)=\dim_{\C{}} V^{1,0}(N-k)=
\begin{cases}
1&\text{ when $k$ is odd}\\
0&\text{ when $k$ is even}\\
\end{cases}
$$
Finally, our computation shows that
\begin{multline*}
2d(k)=2
\min(t_1(k),1-t_1(k),\dots,t_4(k),1-t_4(k))=\\
=\ \frac{k}{N}\quad\text{ for }k=1,3,\dots,2j+1,\dots,N\,.
\end{multline*}
The     statement     of     the     Lemma     follows    now    from
Corollary~\ref{cr:alorithm}.
\end{proof}

\begin{Lemma}
\label{lm:quotient:even}
For any even $N$ the surface $S(N)$ as in Figure~\ref{fig:palindrome}
corresponds   to   a   quotient   of   a  square-tiled  cyclic  cover
$M_{N}(N-1,1,N-1,1)$     over     an     involution     $\tau$     as
in~\eqref{eq:involution}.
\end{Lemma}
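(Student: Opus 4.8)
The plan is to prove the lemma by the same explicit cut-and-paste method used in the odd case (Lemma~\ref{lm:quotient:odd}), but in a strictly simpler form. Since $N$ is even and all four exponents $N-1,1,N-1,1$ are odd, the pullback $p^\ast q_0$ is already the square of a holomorphic $1$-form $\omega$ on $M_N(N-1,1,N-1,1)$, namely the eigenform with $T^\ast\omega=-\omega$ furnished by Lemma~\ref{lm:basis} at $k=N/2$. Hence, unlike the odd case in which one first passes to the doubled cover $M_{2N}$, here the cyclic cover is itself directly a square-tiled surface, tiled by $2N$ unit squares: $N$ of them lie over the white square of the pillow and $N$ over the black one.

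First I would realize this surface explicitly, drawing the analogue of Figure~\ref{fig:odd}. After placing the exponents $N-1,1,N-1,1$ at the four corners of the square pillow in the order prescribed by the type $M_N(a,N-a,b,N-b)$, I enumerate the $N$ white and the $N$ black squares by $\Z/N\Z$ so that the generator $T$ of the deck group acts within each colour as $k\mapsto k+1 \pmod N$. The horizontal and vertical adjacency permutations $\pi_h,\pi_v$ of the $2N$ squares are then read off from the local monodromies $T^{a_i}$ around the four corners. The action of the involution $\tau$ on the squares is pinned down purely formally: since $\tau$ covers the corresponding symmetry of the pillow and, by \eqref{eq:involution}, satisfies $\tau T=T^{-1}\tau$, it must act on the sheet index by a reflection $\tau(k)=c-k$ for an explicit constant $c$.

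A clean counting argument then shows that no auxiliary rescaling is needed. Indeed $M_N(N-1,1,N-1,1)$ has genus $N-1$, while $S(N)$ has genus $N/2$; by Riemann--Hurwitz applied to the degree-two quotient map, $\tau$ must be fixed-point-free, so it acts freely on the set of $2N$ squares and the quotient is tiled by exactly $N$ unit squares. Their total area $N$ already equals that of $S(N)$, so, in contrast to the odd case (where the quotient of $M_{2N}$ carries $2N$ unit squares and must be affinely rescaled), the quotient here is $S(N)$ on the nose. It then remains to identify $S(N)$ combinatorially: I would form the $N$ squares as the $\tau$-orbits, push $\pi_h$ and $\pi_v$ down to them, and check that after the evident relabelling the horizontal permutation becomes the single cycle $(1,\dots,N)$ and the vertical permutation becomes the reversal $k\mapsto N+1-k$ of Figure~\ref{fig:palindrome}.

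The main obstacle is the combinatorial bookkeeping hidden in the second step: fixing the orientation and sign conventions for the monodromy around each of the four corners, thereby computing the exact horizontal and vertical sheet shifts from the exponents $(N-1,1,N-1,1)$, determining the constant $c$ (and hence whether $\tau$ preserves or exchanges the two colours), and tracking all of these identifications through the quotient. As in the odd case, the surest way to control this is to exhibit the two unfoldings of $M_N(N-1,1,N-1,1)$ explicitly and to verify directly that superimposing them via $\tau$ produces the staircase of Figure~\ref{fig:palindrome}; once the picture is set up with the correct conventions, the final identification is a routine check.
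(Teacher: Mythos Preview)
Your plan is essentially the same as the paper's own proof: place the exponents $N-1,1,N-1,1$ at the pillow corners, exhibit the square-tiled cyclic cover as $2N$ unit squares ($N$ white, $N$ black) indexed by $\Z/N\Z$ with the deck generator acting by $k\mapsto k+1$ within each colour, identify $\tau$ as $k_{\mathit{white}}\mapsto (N-k)_{\mathit{black}}$, and read off directly that the quotient is the surface $S(N)$ of Figure~\ref{fig:palindrome} without any rescaling. The paper carries this out pictorially via the analogue of Figure~\ref{fig:odd} and the explicit quotient pattern, exactly as you propose.

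One small logical slip: your Riemann--Hurwitz step is circular as written. You invoke the genus $N/2$ of $S(N)$ to conclude that $\tau$ is fixed-point-free, but the identity of the quotient with $S(N)$ is precisely what is to be proved. The honest route is the one you outline afterwards anyway: since $\tau$ covers the pillow involution that swaps the two faces, it exchanges colours, hence has no fixed square; the quotient therefore has $N$ unit squares, and the combinatorial check of $\pi_h,\pi_v$ on the $\tau$-orbits is what actually pins the quotient down as $S(N)$. Treat the Riemann--Hurwitz remark as a consistency check after the fact rather than as an input.
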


\begin{figure}[htb]
\centering
\includegraphics{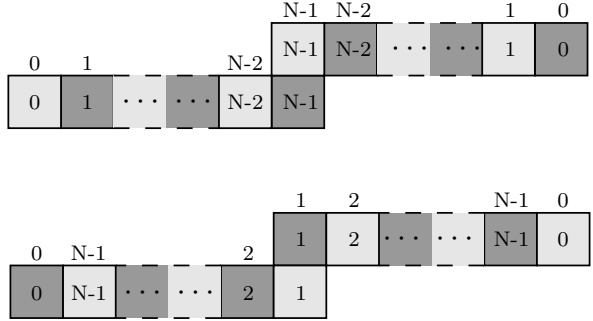}
\includegraphics{M_6_5511.eps}
\begin{picture}(0,0)(11,5) % (16,5)
\begin{picture}(0,0)(97,2)
\put(10,-25){\tiny 0}
\put(30,-25){\tiny 1}
\put(45,-25){$\cdots$}
\put(65,-25){$\cdots$}
\put(85,-25){\tiny N-2}
\put(106,-25){\tiny N-1}
\end{picture}
\begin{picture}(0,0)(101,-13)
\put(10,-25){\tiny 0}
\put(30,-25){\tiny 1}
\put(85,-25){\tiny N-2}
\end{picture}
\end{picture}
\begin{picture}(0,0)(-89,-15) % (-84,-15)
\begin{picture}(0,0)(97,2)
\put(2,-25){\tiny N-1}
\put(22,-25){\tiny N-2}
\put(43,-25){$\cdots$}
\put(63,-25){$\cdots$}
\put(86,-25){\tiny 1}
\put(106,-25){\tiny 0}
\put(2,-10){\tiny N-1}
\put(22,-10){\tiny N-2}
\put(86,-10){\tiny 1}
\put(106,-10){\tiny 0}
\end{picture}
\end{picture}
   % ------------------------------
   %
\begin{picture}(0,0)(18,77) % (18,76)
\begin{picture}(0,0)(97,2)
\put(10,-25){\tiny 0}
\put(25,-25){\tiny N-1}
\put(45,-25){$\cdots$}
\put(65,-25){$\cdots$}
\put(90,-25){\tiny 2}
\put(110,-25){\tiny 1}
\end{picture}
\begin{picture}(0,0)(101,-13)
\put(10,-25){\tiny 0}
\put(25,-25){\tiny N-1}
\put(90,-25){\tiny 2}
\end{picture}
\end{picture}
\begin{picture}(0,0)(-78,57) % (-82,57)
\begin{picture}(0,0)(97,2)
\put(10,-25){\tiny 1}
\put(30,-25){\tiny 2}
\put(43,-25){$\cdots$}
\put(63,-25){$\cdots$}
\put(85,-25){\tiny N-1}
\put(109,-25){\tiny 0}
\put(10,-10){\tiny 1}
\put(30,-10){\tiny 2}
\put(85,-10){\tiny N-1}
\put(109,-10){\tiny 0}
\end{picture}
\end{picture}
\vspace{115bp}
\caption{
\label{fig:even}
Square-tiled $M_{N}(N-1,1,N-1,1)$ for even $N$. A square atop a white
one is always black and vice versa. }
\end{figure}

\begin{proof}
The  proof  is  analogous  to the one of Lemma~\ref{lm:quotient:odd}.
Suppose  that the powers $N-1,1,N-1,1$ are distributed at the corners
of a ``square pillow'' representing our flat $\CP$ as follows:

\includegraphics{axes_hor.eps}
\includegraphics{pillow_white.eps}
\begin{picture}(0,0)(5,7)
\put(125,-3){$N-1$}
\put(151,-33){$1$}
\put(190,-3){$N-1$}
\put(190,-33){$1$}
\end{picture}
\vspace{50bp}

Figure~\ref{fig:even}   represents   two  unfoldings  (preserving  an
enumeration   of   squares)   of   the   square-tiled   cyclic  cover
$M_{N}(N-1,1,N-1,1)$,  where  we enumerate separately white and black
squares  by  numbers  from $0$ to $N-1$. The natural generator of the
group of deck transformations maps a square number $k$ to a square of
the same color having number $(k+1)\mod N$.

\begin{figure}[htb]
%\centering
   %
\includegraphics{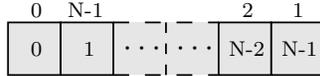}
\begin{picture}(0,0)(-34,-15) % (-34,-5)
\begin{picture}(0,0)(97,2)
\put(10,-25){\tiny 0}
\put(30,-25){\tiny 1}
\put(45,-25){$\cdots$}
\put(65,-25){$\cdots$}
\put(85,-25){\tiny N-2}
\put(105,-25){\tiny N-1}
\put(10,-10){\tiny 0}
\put(24,-10){\tiny N-1}
\put(90,-10){\tiny 2}
\put(109,-10){\tiny 1}
\end{picture}
\end{picture}
\vspace{25bp} % 85
\caption{
\label{fig:even:quotient}
Quotient  of a square-tiled cyclic cover $M_{N}(N-1,1,N-1,1)$ over an
involution.}
\end{figure}

In enumeration of squares as in Figure~\ref{fig:even}, the involution
$\tau$ defined in equation~\eqref{eq:involution} acts as
$$
\tau(k_{\mathit{white}})=(N-k)_{\mathit{black}}
$$
which  corresponds  to  a  superposition  of  the two patterns of our
square-tiled surface indicated at Figure~\ref{fig:even}. Passing to a
quotient  over $\tau$ we obtain a square-tiled surface represented at
Figure~\ref{fig:even:quotient}.   Clearly,   it  coincides  with  the
square-tiled surface $S(N)$ as at Figure~\ref{fig:palindrome}.
\end{proof}

\begin{Lemma}
\label{lm:spectrum:even}
The  spectrum  of  nonnegative Lyapunov exponents of the Hodge bundle
$H^1_{\R{}}$  over  the  Teichm\"uller curve of a square-tiled cyclic
cover $M_{N}(N-1,1,N-1,1)$ has the following form for even $N>2$:
$$
\operatorname{{\Lambda}Spec}=
\left\{
\cfrac{2}{N},\ \cfrac{2}{N},\ \cfrac{4}{N},\ \cfrac{4}{N},\  \dots\ ,\cfrac{N-2}{N},\ \cfrac{N-2}{N},\ 1
\right\}
$$
\end{Lemma}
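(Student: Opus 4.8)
The plan is to follow verbatim the strategy of Lemma~\ref{lm:spectrum:odd}: compute the fractional parts $t_i(k)=\{a_i k/N\}$ for the specific parameters $(a_1,a_2,a_3,a_4)=(N-1,1,N-1,1)$, feed the resulting values of $t(k)$, $t(N-k)$ and $d(k)$ into Corollary~\ref{cr:alorithm}, and read off the multiset of exponents. Since $\gcd(N,N-1)=\gcd(N,1)=1$, conditions~\eqref{eq:a1:a4} hold, and the genus formula of Lemma~\ref{lm:g} gives $g=N+1-\tfrac12\cdot 4=N-1$, so I expect exactly $N-1$ entries in the nonnegative part of the spectrum.

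First I would record the explicit fractional parts. For $k=1,\dots,N-1$ one has $\{(N-1)k/N\}=\{-k/N\}=1-k/N$ and $\{k/N\}=k/N$, whence
$$
t_1(k)=t_3(k)=1-\frac{k}{N},\qquad t_2(k)=t_4(k)=\frac{k}{N}\ .
$$
Consequently $t(k)=t_1(k)+\dots+t_4(k)=2$ for every $k\in\{1,\dots,N-1\}$. Because none of the $t_i(k)$ vanishes on this range, the elementary remark from the proof of Lemma~\ref{lm:dim:Vk:equals:two} (that $\{x\}+\{y\}=1$ when $\{x\}>0$ and $x+y\in\Z$) gives $t_i(N-k)=1-t_i(k)$, hence $t(N-k)=4-t(k)=2$ as well. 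Thus $t(k)=t(N-k)=2$ for all $k$, which is precisely the signature $(1,1)$ case of Theorem~\ref{th:main:theorem}, and $V^{1,0}(k)$ is a line bundle for every $k$.

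Next I would evaluate the degree. Substituting the values above, the eight-element collection $\{t_i(k),1-t_i(k)\}$ collapses to $\{k/N,\,1-k/N\}$, so by Theorem~\ref{th:degree}
$$
d(k)=\min\!\big(t_1(k),1-t_1(k),\dots,t_4(k),1-t_4(k)\big)=\min\!\Big(\frac{k}{N},\,1-\frac{k}{N}\Big)\ ,
$$
which gives $2d(k)=2k/N$ for $1\le k\le N/2$ and $2d(k)=2(N-k)/N$ for $N/2\le k\le N-1$, both reducing to $2d(N/2)=1$ at the middle index.

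Finally, since $t(k)=2$ for all $k$, Corollary~\ref{cr:alorithm} instructs me to add the single number $2d(k)$ for each $k=1,\dots,N-1$, so $\operatorname{{\Lambda}Spec}=\{2d(k):k=1,\dots,N-1\}$. The only remaining work is bookkeeping: as $k$ runs over $1,\dots,N/2-1$ the values $2k/N$ produce $2/N,4/N,\dots,(N-2)/N$; as $k$ runs over $N/2+1,\dots,N-1$ the values $2(N-k)/N$ reproduce the same list $(N-2)/N,\dots,4/N,2/N$ in reverse; and $k=N/2$ contributes the single value $1$. Hence each even multiple $2j/N$ with $1\le j\le N/2-1$ occurs with multiplicity two and $1$ occurs once, yielding $\{2/N,2/N,4/N,4/N,\dots,(N-2)/N,(N-2)/N,1\}$, a set of cardinality $2(N/2-1)+1=N-1=g$, as required. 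I do not anticipate any genuine obstacle beyond verifying the reflection symmetry $2d(k)=2d(N-k)$ that pairs up the entries and checking that the palindrome ``folds'' correctly at $k=N/2$; this is in fact cleaner than the odd case, since here \emph{every} value of $k$ contributes (no index is lost to $t(k)=1$, unlike in Lemma~\ref{lm:spectrum:odd}, where only odd $k$ mattered).
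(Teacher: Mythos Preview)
Your proof is correct and follows exactly the approach the paper intends: the paper's own proof of Lemma~\ref{lm:spectrum:even} reads simply ``The proof is completely analogous to the proof of Lemma~\ref{lm:spectrum:odd} and is left to the reader,'' and you have carried out precisely that analogy. Your computations of $t_i(k)$, the verification that $t(k)=t(N-k)=2$ for every $k$, the evaluation $d(k)=\min(k/N,1-k/N)$, and the final bookkeeping via Corollary~\ref{cr:alorithm} are all correct.
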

\begin{proof}
The    proof    is    completely    analogous   to   the   proof   of
Lemma~\ref{lm:spectrum:odd} and is left to the reader.
\end{proof}

Proposition~\ref{pr:palindrome} is proved.

\begin{Remark}
\label{rm:S2N:to:SN}
Note  that  the  surface  $S(2N)$  as  in Figure~\ref{fig:palindrome}
admits    an    automorphism    $\sigma:    S\to    S$    such   that
$\sigma^\ast\omega=\omega$.         In         enumeration         of
Figure~\ref{fig:even:quotient} the corresponding permutation $\sigma$
is defined as
$$
\sigma(k)=k+N\!\mod 2N \ .
$$
One  easily checks that $S(2N)/\sigma=S(N)$. For even $N$ the induced
double  cover  $S(2N)\to S(N)$ has two ramification points at the two
zeroes  of  $S(2N)$.  For  odd  $N$  the  cover  $S(2N)\to  S(N)$  is
unramified.

This  cover  extends  to an involution of the corresponding universal
curve, and, thus, induces a map of the Teichm\"uller curve of $S(2N)$
to   the  Teichm\"uller  curve  of  $S(N)$.  The  latter  map  is  an
isomorphism  for  even  $N$  and  a  double  cover  for  odd $N$, see
section~\ref{ss:orbits} below.
\end{Remark}

\begin{Remark}
Note  that  the  quotient  $\tau$  of  a  square-tiled  cyclic  cover
$M_{2N}(2N-1,1,2N-1,1)$  over $S(2N)$ \textit{does not} extends to an
automorphism of the universal curve. In particular, the Teichm\"uller
curve   of  the  square-tiled  $M_{2N}(2N-1,1,2N-1,1)$  is  a  double
\textit{quotient}  of the one of $S(2N)$, see section~\ref{ss:orbits}
below.  There is no contradiction with Proposition~\ref{pr:symmetry}:
if    we    \textit{give    names}    to    the    four   zeroes   of
$M_{2N}(2N-1,1,2N-1,1)$ breaking part of the symmetry (as it was done
in  Proposition~\ref{pr:symmetry}),  the  Teichm\"uller  curve of the
square-tiled  $M_{2N}(2N-1,1,2N-1,1)$  will  become isomorphic to the
one of the $S(2N)$.

A  square-tiled  cyclic  cover $M_{2N}(2N-1,1,2N-1,1)$ admits another
involution     $\tau_2$     preserving     the     flat    structure,
$\tau_2\omega=\omega$.  It is induced from a unique involution of the
underlying  $\CP$  which  interchanges  the points in each of the two
pairs  $P_1,P_3$ and $P_2,P_4$ (the points in each pair correspond to
the   \textit{same}   power  $2N-1$  or  $1$  correspondingly).  This
automorphism commutes with the deck transformations,
$
\tau_2 T=T \tau_2\ .
$
The  quotient  of a square-tiled cyclic cover $M_{2N}(2N-1,1,2N-1,1)$
over  $\tau_2$  is  isomorphic to a cyclic cover $M_{2N}(2N-1,1,N,N)$
tiled with rectangles $1\times\frac{1}{2}$.
\end{Remark}

\begin{Remark}
A  square-tiled  cyclic  cover  $M_{2N}(2N-1,1,N,N)$  is  obviously a
double cover over $M_N(N-1,1,N,N)$. Clearly, a quadratic differential
$p^\ast q_0$ induced from $q_0$ on $\CP$ by the projection
$$
p:M_N(N-1,1,N,N)\to\CP
$$
has  two  zeroes  of  degrees $N-2$ and $2N$ simple poles. Hence, the
cyclic cover $M_N(N-1,1,N,N)$ is a Riemann sphere, which implies that
$M_{2N}(2N-1,1,N,N)$  is hyperelliptic Riemann surface. Note that the
square-tiled cyclic cover $M_{2N}(2N-1,1,N,N)$ belongs to the stratum
$\cH(N-1,N-1)$.  However,  the  hyperelliptic involution of this flat
surface   fixes   the   zeroes,  so  the  square-tiled  cyclic  cover
$M_{2N}(2N-1,1,N,N)$  belongs  to  the component $\cH^{odd}(N-1,N-1)$
and   not   to   the   hyperelliptic  component  (see  Proposition  7
in~\cite{Kontsevich:Zorich}).
\end{Remark}

%----------------------------------------------------------------
\subsection{Orbits of several distinguished square-tiled surfaces}
\label{ss:orbits}

For  the  sake  of  completeness  we  describe  the  $\SLZ$-orbits of
square-tiled  surfaces  discussed  above.  Clearly,  these  data also
describes the structure of the corresponding arithmetic Teichm\"uller
curve.

All   surfaces   considered   below   are   hyperelliptic,  so  their
$\SLZ$-orbits  coincide  with $\PSLZ$-orbits. We denote generators of
$\SLZ$ by
$
T=\begin{pmatrix}1&1\\0&1\end{pmatrix}
$
and
$
%\qquad
S=\begin{pmatrix}0&1\\-1&0\end{pmatrix}\ .
$

The  Veech  groups  of  the ``\textit{stairs}'' square-tiled surfaces
which  appear  in  the  orbits  of  surfaces  $S(N)$  were  found  by
G.~Schmith\"usen      in~\cite{Schmithusen};      the     generalized
``\textit{stairs}''  square-tiled  surfaces are studied by M.~Schmoll
in~\cite{Schmoll:stairs}.

%=======================================================
\newpage

% 1 Id
\includegraphics{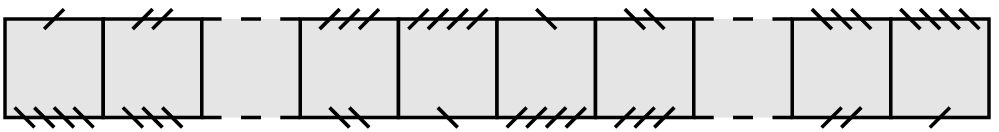}

% 2 T
\includegraphics{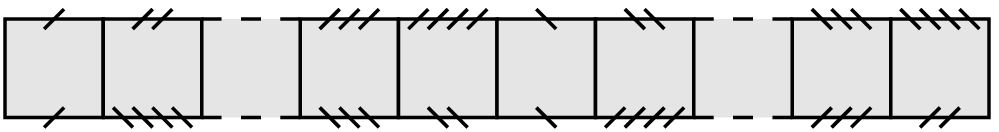}

% 6 S
\includegraphics{palindrome10_1.eps}

% 5 TS
\includegraphics{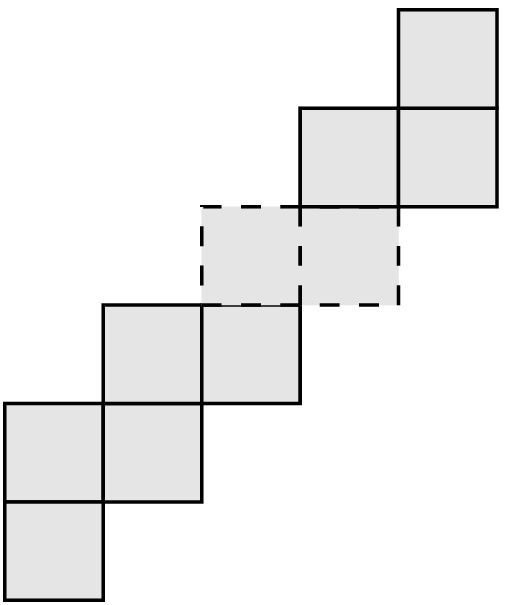}

% 4 STS=TST
\includegraphics{stair3.eps}

% 3 ST
\includegraphics{palindrome10_2.eps}

\includegraphics{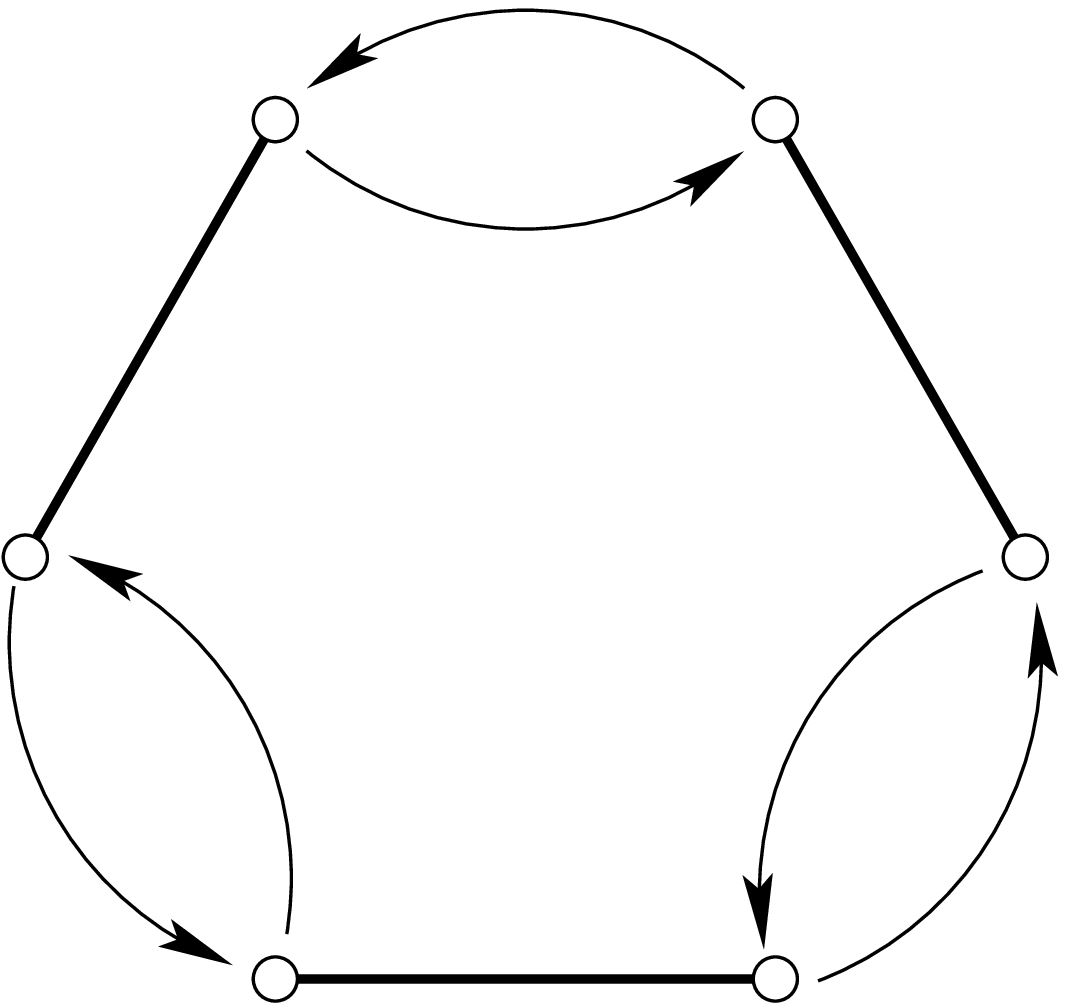}

\begin{picture}(0,0)(-200,0)
\put(-39,-40){$T$}
\put(-65,-87){$T$}
\put(-10,-87){$T$}
\put(-39,-110){$S$}
\put(-70,-55){$S$}
\put(-3,-55){$S$}
\put(-60,-35){\scriptsize $1$}
\put(-57.5,-32){\circle{10}}
\put(-10,-35){\scriptsize $2$}
\put(-7.5,-32){\circle{10}}
\put(-80,-70){\scriptsize $6$}
\put(-77.5,-67){\circle{10}}
\put(8,-70){\scriptsize $3$}
\put(10.5,-67){\circle{10}}
\put(-59,-108){\scriptsize $5$}
\put(-56.5,-105){\circle{10}}
\put(-15,-108){\scriptsize $4$}
\put(-12.5,-105){\circle{10}}
\put(-94,-21){\scriptsize $1$}
\put(27,-21){\scriptsize $2$}
\put(-195,-63){\scriptsize $6$}
\put(125,-63){\scriptsize $3$}
\put(-110,-130){\scriptsize $5$}
\put(43,-130){\scriptsize $4$}
\begin{picture}(0,0)(-2.5,-2.5)
\put(-94,-21){\circle{10}}
\put(27,-21){\circle{10}}
\put(-195,-63){\circle{10}}
\put(125,-63){\circle{10}}
\put(-110,-130){\circle{10}}
\put(43,-130){\circle{10}}
\end{picture}
\end{picture}

\vspace*{150bp}

The picture above (correspondingly below) represents the $\SLZ$-orbit
of  the  surface  $S(N)$ from Figure~\ref{fig:palindrome} when $N$ is
even   (correspondingly  odd).  The  bottom  picture  represents  the
$\SLZ$-orbit of the square-tiled $M_N(N-1,1,N-1,1)$.
%\bigskip

% 1 Id
\includegraphics{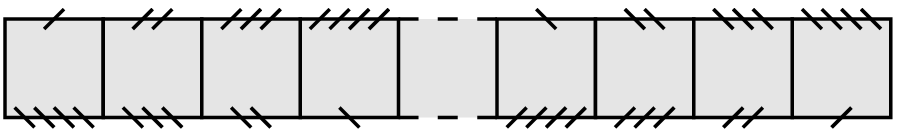}

% 2 S
\includegraphics{palindrome9.eps}

% 3 TS
\includegraphics{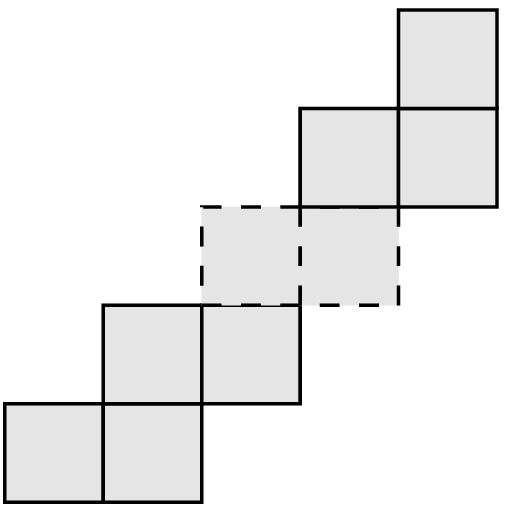}

\begin{picture}(0,0)(-260,0)
\put(-155,-56){\scriptsize $1$}
\put(-105,-56){\scriptsize $2$}
\put(-10,-54) {\scriptsize  $3$}
\begin{picture}(0,0)(-2.5,-2.5)
\put(-155,-56){\circle{10}}
\put(-105,-56){\circle{10}}
\put(-10,-54) {\circle{10}}
\end{picture}
\end{picture}

   %---------------------------------------
\includegraphics{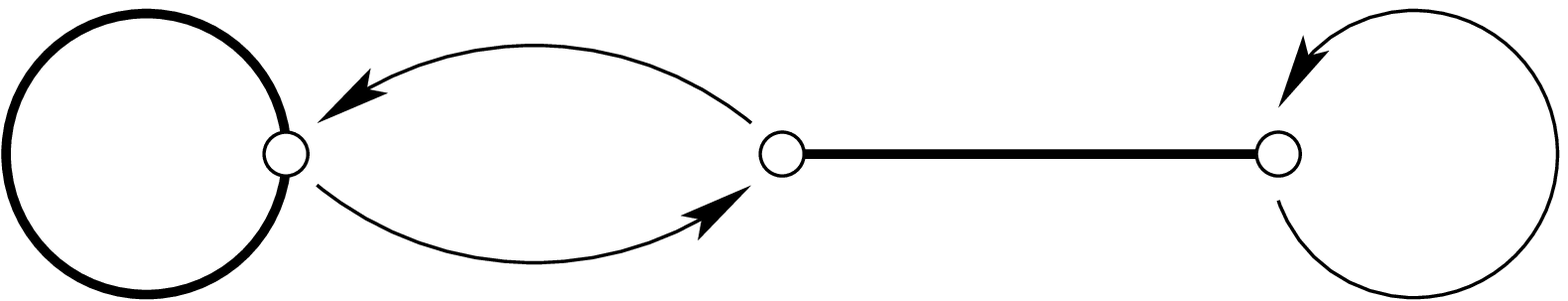}
\begin{picture}(0,0)(-260,80)
\put(-156,-68){$T$}
\put(-116,-62){$S$}
\put(-66,-68){$T$}
\put(-7,-62){$S$}
\put(-135,-74){\scriptsize $1$}
\put(-98,-74){\scriptsize $2$}
\put(-31,-68){\scriptsize  $3$}
\begin{picture}(0,0)(-2.5,-2.5)
\put(-135,-74){\circle{10}}
\put(-98,-74){\circle{10}}
\put(-31,-68){\circle{10}}
\end{picture}
\end{picture}

% -------------------------------------

\vspace*{180bp}

% 1 Id
\includegraphics{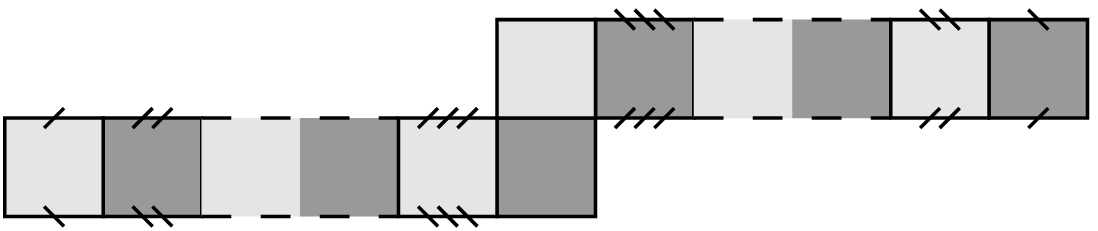}

% 2 S
\includegraphics{M_6_5511_identif.eps}

% 3 TS
\includegraphics{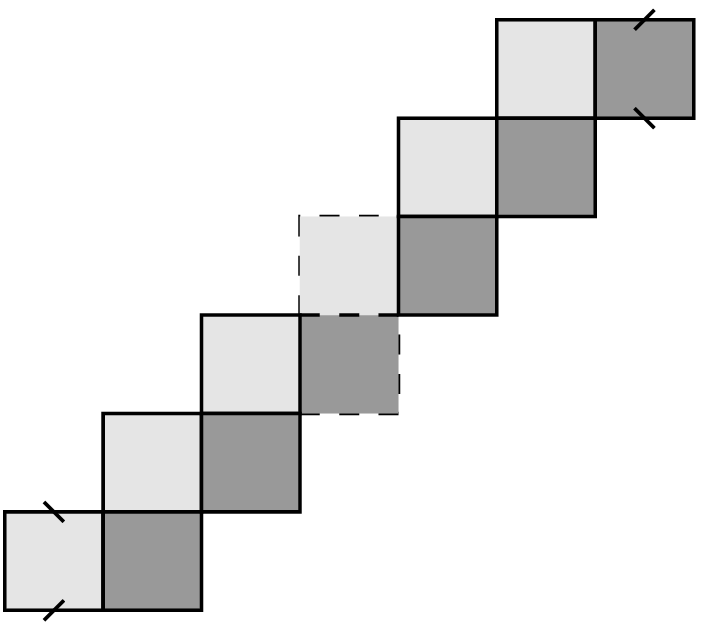}

\begin{picture}(0,0)(-260,0)
\put(-138,-52.5){\scriptsize $1$}
\put(-102,-53){\scriptsize $2$}
\put(-10,-53) {\scriptsize  $3$}
\begin{picture}(0,0)(-2.5,-2.5)
\put(-138,-52.5){\circle{10}}
\put(-102,-53){\circle{10}}
\put(-10,-53) {\circle{10}}
\end{picture}
\end{picture}

\newpage
%=======================================================
\section{Homological dimension of the square-tiled $M_N(N-1,1,N-1,1)$}

The  following  characteristic of a square-tiled surface is important
for  applications, see~\cite{Forni:new}. Given a square-tiled surface
$S_0$,  consider  a collection of cycles $c_1,\dots,c_j$ representing
the  waist  curves  of  its  maximal  horizontal  cylinders,  and let
$d(S_0)$  be  the  dimension of the linear span of $c_1,\dots,c_j$ in
$H_1(S_0,\R{})$.  The \textit{homological dimension} of an arithmetic
Teichm\"uller  curve  corresponding  to a square-tiled surface $S_0$ is
defined  as  the  maximum  of $d(S)$ over $S$ in the $\PSLZ$-orbit of
$S_0$.

Answering  the  question  of  G.~Forni  we  show that the homological
dimension  of the arithmetic Teichm\"uller curve corresponding to the
square-tiled  cyclic cover $M_N(N-1,1,N-1,1)$ for even $N$ is maximal
possible:  it  is equal to the genus $g$ of the corresponding Riemann
surface.

\begin{figure}[hbt]
\includegraphics{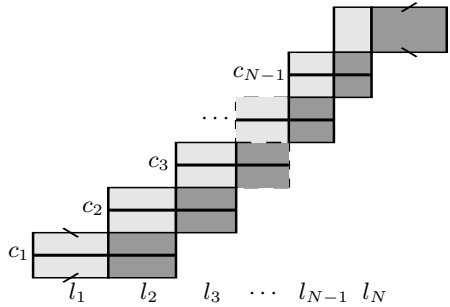}
\begin{picture}(0,0)(-95,-5) % (0,0)(-95,5)
\put(-180,-95){\scriptsize $c_1$}
\put(-152,-78){\scriptsize $c_2$}
\put(-127,-61) {\scriptsize  $c_3$}
\put(-107,-44.5) {\scriptsize  $\cdots$}
\put(-95,-27) {\scriptsize  $c_{N-1}$}
\put(-157,-111){\scriptsize $l_1$}
\put(-130,-111){\scriptsize $l_2$}
\put(-106,-111) {\scriptsize  $l_3$}
\put(-89,-111) {\scriptsize  $\cdots$}
\put(-70,-111) {\scriptsize  $l_{N-1}$}
\put(-46,-111) {\scriptsize  $l_N$}
\end{picture}
\vspace{100pt} % 120
\caption{
\label{fig:4:Giovanni}
A   horizontal   deformation   of   the   square-tiled  cyclic  cover
$M_N(N-1,1,N-1,1)$
}
\end{figure}

The square-tiled cyclic cover $M_N(N-1,1,N-1,1)$ was discussed in the
previous  section. It has four conical singularities with cone angles
$N\pi$.  The  corresponding  Abelian  differential has four zeroes of
degrees  $N/2-1$.  Hence  the  underlying  Riemann  surface has genus
$g=N-1$.

The   $\PSLZ$-orbit   of   the   square-tiled  $M_N(N-1,1,N-1,1)$  is
represented    at    the    bottom    picture    at    the   end   of
section~\ref{ss:orbits}.  Consider  a deformation of surface $3$ from
this  orbit  as in Figure~\ref{fig:4:Giovanni}. By $l_1,\dots,l_N$ we
denote  the  lengths  of  the  horizontal  sides of the corresponding
rectangles.

The  integrals of the holomorphic form $\omega$ representing the flat
structure over the cycles $c_1,\dots,c_{N-1}$ are equal to $l_1+l_2$,
$l_2+l_3$,     $\dots$,    $l_{N-1}+l_N$    correspondingly.    Since
$l_1,\dots,l_N$  are  arbitrary  positive  real numbers, we can chose
them in such a way that the integrals over $c_1,\dots,c_{N-1}$ become
rationally independent. Hence, the integer cycles $c_1,\dots,c_{N-1}$
are  linear  independent over rationals and, thus, linear independent
over  reals.  Since  $g=N-1$  they  span a $g$-dimensional Lagrangian
subspace.

%=======================================================
\newpage

%---------------------------------------------------------------------
\section{Cyclic cover $M_{30}(3,5,9,13)$}
\label{a:table}

The table below evaluates the quantities discussed in this paper in a
concrete case of the square-tiled cyclic cover $M_{30}(3,5,9,13)$.

$$
%\scriptsize
\begin{array}{|c||c|c|c|c||c||c|c|c||c|}
\hline &\multicolumn{4}{|c||}{}&&&&&\\
[-\halfbls]
k & \multicolumn{4}{|c||}{t_i(k)=\left\{\cfrac{a_i}{N}k\right\}}&
\!t(k)\! &\!    \dim V^{1,0}\! &\!    \dim V^{0,1}\! &\!    \dim V\! &    \lambda\\
[-\halfbls]&\multicolumn{4}{|c||}{}&&&&&\\
\hline
1&1/10&1/6&3/10&13/30&1&0&2&2&-\\
\hline
2&1/5&1/3&3/5&13/15&2&1&1&2&4/15\\
\hline
3&3/10&1/2&9/10&3/10&2&1&1&2&1/5\\
\hline
4&2/5&2/3&1/5&11/15&2&1&1&2&2/5\\
\hline
5&1/2&5/6&1/2&1/6&2&1&1&2&1/3\\
\hline
6&3/5&0&4/5&3/5&2&1&0&1&0\\
\hline
7&7/10&1/6&1/10&1/30&1&0&2&2&-\\
\hline
8&4/5&1/3&2/5&7/15&2&1&1&2&2/5\\
\hline
9&9/10&1/2&7/10&9/10&3&2&0&2&0;\ 0\\
\hline
10&0&2/3&0&1/3&1&0&0&0&-\\
\hline
11&1/10&5/6&3/10&23/30&2&1&1&2&1/5\\
\hline
12&1/5&0&3/5&1/5&1&0&1&1&-\\
\hline
13&3/10&1/6&9/10&19/30&2&1&1&2&1/5\\
\hline
14&2/5&1/3&1/5&1/15&1&0&2&2&-\\
\hline
15&1/2&1/2&1/2&1/2&2&1&1&2&1\\
\hline
16&3/5&2/3&4/5&14/15&3&2&0&2&0;\ 0\\
\hline
17&7/10&5/6&1/10&11/30&2&1&1&2&1/5\\
\hline
18&4/5&0&2/5&4/5&2&1&0&1&0\\
\hline
19&9/10&1/6&7/10&7/30&2&1&1&2&1/5\\
\hline
20&0&1/3&0&2/3&1&0&0&0&-\\
\hline
21&1/10&1/2&3/10&1/10&1&0&2&2&-\\
\hline
22&1/5&2/3&3/5&8/15&2&1&1&2&2/5\\
\hline
23&3/10&5/6&9/10&29/30&3&2&0&2&0;\ 0\\
\hline
24&2/5&0&1/5&2/5&1&0&1&1&-\\
\hline
25&1/2&1/6&1/2&5/6&2&1&1&2&1/3\\
\hline
26&3/5&1/3&4/5&4/15&2&1&1&2&2/5\\
\hline
27&7/10&1/2&1/10&7/10&2&1&1&2&1/5\\
\hline
28&4/5&2/3&2/5&2/15&2&1&1&2&4/15\\
\hline
29&9/10&5/6&7/10&17/30&3&2&0&2&0;\ 0\\
\hline
\end{array}
$$
\bigskip

Here   $V^{1,0},  V^{0,1},  V$  denote  $V^{1,0}(N-k),  V^{0,1}(N-k),
V(N-k)$ correspondingly.

Lyapunov  exponents  in the right column of the table are constructed
following  the algorithm from Corollary~\ref{cr:alorithm}. The cyclic
cover $M_{30}(3,\!5,\!9,\!13)$ has genus $g=25$. The table also illustrates
Lemma~\ref{lm:dim:Vk:equals:two}:  depending on how many of $t_i(k)$,
where  $i=1,2,3,4$,  are  null for a given $k$, the eigenspace $V(k)$
might have dimension $0,1$ or $2$.

The    resulting   spectrum   $\{\lambda_1,\dots,\lambda_{25}\}$   is
presented below:
$$
\left\{1,
\ \left(\cfrac{2}{5}\right)^4,
\ \left(\cfrac{1}{3}\right)^2,
\ \left(\cfrac{4}{15}\right)^2,
\ \left(\cfrac{1}{5}\right)^6,
\ 0^{10}
\right\}
$$
where powers denote multiplicities.

%----------------------------------------------------------------
\subsection*{Acknowledgments}
We  thank  Giovanni  Forni,  Carlos  Matheus, and Martin M\"oller for
important  discussions.  We  thank  Alex  Wright  and  the  anonymous
referees for careful reading the manuscript and indicating to us some
typos.  Appendix~\ref{s:symmetries}  is  motivated  by  a question of
J.-C.~Yoccoz, for which we would like to thank him.

The authors are grateful to HIM, IHES, MPIM and University of Chicago
for hospitality while preparation of this paper.

\end{document}